\documentclass[11pt]{article}
\usepackage[T1]{fontenc}
\usepackage[utf8]{inputenc}
\usepackage{lmodern,amsmath,amssymb,amsthm,mathtools}
\usepackage[margin=1in]{geometry}
\usepackage{microtype,booktabs,enumitem}
\usepackage[colorlinks=true,linkcolor=blue,citecolor=blue,urlcolor=blue]{hyperref}
\hypersetup{pdftitle={Perfect state transfer on mixed graphs: complete classes and transfer times},pdfauthor={Xingkun Song}}
\numberwithin{equation}{section}
\newtheorem{theorem}{Theorem}[section]
\newtheorem{lemma}[theorem]{Lemma}
\newtheorem{proposition}[theorem]{Proposition}
\newtheorem{corollary}[theorem]{Corollary}
\theoremstyle{definition}

\newtheorem{question}{Question}
\theoremstyle{remark}

\newcommand{\ii}{\mathrm{i}}
\newcommand{\R}{\mathbb R}
\newcommand{\T}{\mathcal T}
\newcommand{\e}{\mathbf e}
\newcommand{\one}{\mathbf 1}
\DeclareMathOperator{\diag}{diag}
\title{Perfect state transfer on mixed graphs:\\complete classes and transfer times}
\author{Xingkun Song$^{1,2}$\\
{\small $^{1}$ School of Mathematics and Statistics, Qinghai Minzu University,}\\[-2pt]
{\small Xining, Qinghai 810007, P.R. China}\\
{\small $^{2}$ Qinghai Institute of Applied Mathematics,}\\[-2pt]
{\small Xining, Qinghai 810007, P.R. China}\\
{\small \href{mailto:xksong@126.com}{xksong@126.com}}}
\date{}
\begin{document}
\maketitle
\begin{abstract}
For perfect state transfer (PST) on unweighted mixed graphs, we classify the normalized transfer times of complete PST classes. A finite set $\Lambda\subset\mathbb R/\mathbb Z$ containing zero occurs at a nonstationary periodic vertex if and only if $\cos(2\pi(x-y))\in\mathbb Q$ for all $x,y\in\Lambda$. Every admissible set has a connected oriented realization. We also classify the possible return phases of oriented realizations at the minimum vertex period. Transfers at rational multiples of the common minimum vertex period partition a complete class into sets of size at most six, or at most three in an oriented graph with return phase $-1$; both bounds are sharp. We construct complete classes of every finite size, including classes in which all transfers between distinct vertices occur at irrational multiples of the period and no switching automorphism maps a class vertex to a distinct class vertex. We also characterize simultaneous realization in connected oriented graphs with prescribed relative minimum vertex periods and return phases. The proof combines an imaginary quadratic field restriction with an unweighted construction that selects the complete target set.
\end{abstract}

\noindent\textbf{Keywords.} Perfect state transfer; mixed graph; oriented graph; equitable partition; vertex periodicity.
\par\smallskip
\noindent\textbf{Mathematics Subject Classification (2020).} 05C50, 81P45, 15A18.

\section{Introduction}
Unless stated otherwise, all graphs are finite, simple, unweighted mixed graphs. An oriented graph is a mixed graph with no undirected edges. Connectivity and vertex degrees refer to the underlying undirected graph.

Let $H$ be the Hermitian adjacency matrix of a mixed graph and put $U_H(t)=e^{-\ii tH}$. Perfect state transfer (PST) occurs from $a$ to $b$ at time $t$ if $U_H(t)\e_a=\gamma\e_b$, where $\e_a$ is the standard basis vector at $a$ and $|\gamma|=1$. In an oriented graph, $H=\ii A$ for a real skew-symmetric matrix $A$, so $U_H(t)=e^{tA}$. The first transfer times in the two directions may differ, and a vertex may have PST to several other vertices at different times.

The \emph{complete PST class} of $a$ consists of all vertices reached from $a$ by PST at real times, including $a$ itself. For Hermitian matrices with algebraic entries, PST between distinct vertices implies periodicity by \cite[Theorem~6.1]{GReal}. Every nontrivial complete class therefore has a common minimum vertex period $T$: the first positive time when a vertex state returns to its initial coordinate line, allowing a phase factor. For a fixed source $a$, a transfer to $b$ at time $t$ defines its \emph{normalized transfer time} $x_b=t/T\pmod1$. Distinct targets have distinct normalized transfer times. We write $\Lambda$ for their set, so $0\in\Lambda$. Singleton classes are included when their source is nonstationary and periodic; nonstationary means that its evolution does not remain on its initial coordinate line.

Godsil and Lato~\cite{GL} developed the spectral theory of PST on oriented graphs. Acuaviva, Chan, Eldridge, Godsil, How-Chun-Lun, Tamon, Wright and Zhang~\cite{AC} constructed graphs with several mutually transferring vertices, and Miki, Tsujimoto and Zhao~\cite{MTZ} studied multiple state transfer for weighted Hermitian Hamiltonians. Under the additional assumption that the skew adjacency matrix belongs to a Bose--Mesner algebra, Chan, Pantangi, Razafimahatratra and Sin~\cite[Corollary~2.3]{CPRS} proved that a nontrivial complete class has size $2,3,4$, or $6$. This includes normal oriented Cayley graphs. We impose no such symmetry assumption and ask which complete classes and transfer times are possible in general.

\begin{question}\label{prob:classification}
Which finite sets $\Lambda\subset\mathbb R/\mathbb Z$ containing zero occur as the sets of normalized transfer times of complete PST classes at nonstationary periodic vertices in connected mixed graphs?
\end{question}

Realizing the prescribed transfers alone does not answer this question: every other possible target must also be excluded. The following theorem gives the classification. In an oriented graph every transfer and return phase is $+1$ or $-1$.

\begin{theorem}\label{thm:schedules}
Let $\Lambda\subset\mathbb R/\mathbb Z$ be a finite set containing zero. The following statements are equivalent.
\begin{enumerate}[label=\textup{(\roman*)},leftmargin=*]
\item $\Lambda$ is the set of normalized transfer times of a complete PST class at a nonstationary periodic vertex in a connected graph.
\item The same holds in a connected oriented graph with return phase $+1$ at the minimum vertex period.
\item There is an imaginary quadratic field $J$ such that $e^{2\pi\ii x}\in J$ for every $x\in\Lambda$.
\item $\cos(2\pi(x-y))\in\mathbb Q$ for all $x,y\in\Lambda$.
\end{enumerate}
An oriented realization with return phase $-1$ at the minimum vertex period exists if and only if
\begin{equation}\label{eq:negativephasecondition}
 \cos(\pi(x-y))\in\mathbb Q\qquad(x,y\in\Lambda),
\end{equation}
using representatives in $[0,1)$. Rationality in this condition is independent of the representatives.
\end{theorem}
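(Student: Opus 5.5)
The plan is to prove (ii)$\Rightarrow$(i) trivially, then (i)$\Rightarrow$(iii)$\Rightarrow$(iv)$\Rightarrow$(iii)$\Rightarrow$(ii), and to handle the $-1$ phase statement by adapting the two nontrivial directions.

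\textbf{Necessity, (i)$\Rightarrow$(iii).} Let $a$ be the source, $T$ its minimum vertex period, and write $H=\sum_r \theta_r E_r$ over the eigenvalue support of $a$. Periodicity of $a$ (via \cite[Theorem~6.1]{GReal} and the Godsil ratio condition) gives a standard form for this support. Either all $\theta_r\in\mathbb Z$, or $\theta_r=(\alpha+\beta_r\sqrt\Delta)/2$ with $\beta_r\in\mathbb Z$ and $\Delta$ squarefree. In the oriented case $H=\ii A$ has purely imaginary skew eigenvalues, and I expect $\Delta<0$ to be forced on the eigenvalues of $A$ itself.

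The main step starts from PST $a\to b$ at time $t=xT$, which means $E_r\e_b=\sigma_r\, e^{-\ii t\theta_r}E_r\e_a$ up to a common phase. The entries $(E_r)_{ab}$ lie in the field generated by the entries of $H$, namely $\mathbb Q(\ii)$, and the eigenvalues. Taking ratios across two eigenvalues in the support then shows that $e^{2\pi\ii x k}$ lies in a fixed field for the relevant differences $k=(\theta_r-\theta_s)T/2\pi\in\mathbb Z$. Because $T$ is minimal, the gcd of these $k$ is $1$, so $e^{2\pi\ii x}$ itself lies in that field.

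It remains to pin the field down. The numbers $e^{2\pi\ii x}$ have modulus one and lie in a field of degree at most $2$ over $\mathbb Q$ (or a CM extension). The choice of $J$ must therefore be imaginary quadratic, combining $\mathbb Q(\ii)$ with $\mathbb Q(\sqrt\Delta)$. I expect this Galois bookkeeping to be the delicate point: I would try to show that the transfer phase $\gamma$ and the product $e^{-\ii t\theta_r}$ conspire so that all the $e^{2\pi\ii x}$ lie in one common imaginary quadratic field. The model case to check is a cube root of unity arising from $\mathbb Q(\sqrt{-3})$, as in \cite{CPRS}.

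\textbf{(iii)$\Leftrightarrow$(iv).} If $z=e^{2\pi\ii x}$ and $w=e^{2\pi\ii y}$ lie in $J$, then $z\bar w\in J$. Complex conjugation is the nontrivial automorphism of $J$, so $2\cos(2\pi(x-y))=z\bar w+\overline{z\bar w}\in\mathbb Q$.

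Conversely, take $y=0$, so that $c_x=\cos 2\pi x\in\mathbb Q$ and $e^{2\pi\ii x}\in\mathbb Q(\sqrt{c_x^2-1})$. Rationality of $\cos 2\pi(x-y)$ gives $\sin 2\pi x\sin 2\pi y\in\mathbb Q$. Hence all the $\sqrt{c_x^2-1}$ that are nonzero generate the same quadratic field, which is imaginary. If every element of $\Lambda$ has $\sin 2\pi x=0$, any imaginary quadratic field will do.

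For the phase $-1$ statement, replacing $2\pi$ by $\pi$ and replacing each $x$ by its representative in $[0,1)$ gives the analogous equivalence for $e^{\pi\ii x}$. Independence of the representatives holds because shifting $x$ by $1$ changes $\cos\pi(x-y)$ only by a sign.

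\textbf{Sufficiency, (iii)$\Rightarrow$(ii), and the main obstacle.} I would construct the oriented realization directly. First build a weighted skew matrix on a quotient, for example the circulant in $J$ acting on $\mathbb Z$-lattice points of $\mathcal O_J$, realizing the phases $e^{2\pi\ii x}$ as the eigenvalues of a rotation. Next lift this quotient to an unweighted oriented graph through an equitable partition, with cells blown up and orientations balanced so that the quotient matrix is the prescribed one. Finally, attach a rigid gadget so that the graph is connected.

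The hardest step is \emph{completeness}: showing that no unintended vertex receives PST. I would first try to make the vertex $a$ lie in a singleton cell whose eigenvector support lies entirely in the quotient. Any PST target must then lie in a singleton cell with the same projections, and by designing the cells so that the only singleton cells are the intended targets, the class is exactly $\Lambda$. The return phase $+1$ versus $-1$ is controlled by the parity of the eigenvalues in the support. For phase $-1$ this forces us to work with $e^{\pi\ii x}$, which is exactly the role of condition \eqref{eq:negativephasecondition}.
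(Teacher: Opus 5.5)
Your arguments for (ii)$\Rightarrow$(i), for (iii)$\Leftrightarrow$(iv), and for independence of representatives are correct. Your first necessity step (ratios $\beta_\theta/\beta_\eta=e^{-\ii t(\theta-\eta)}$ plus $\gcd=1$) is also the paper's. But that step only puts $z=e^{-2\pi\ii x}$ in the biquadratic field $L=\mathbb Q(\ii,\sqrt\Delta)$, and only if you interpolate $E_\theta\e_a=f_\theta(H)\e_a$ on $\Phi_a$ rather than on the whole spectrum. Your assertion that $z$ lies in a field of degree at most two is unsupported, and the fallback ``or a CM extension'' is fatal: $\mathbb Q(\ii,\sqrt2)$ and $\mathbb Q(\ii,\sqrt3)$ are CM fields containing $e^{2\pi\ii/8}$ and $e^{2\pi\ii/12}$, which the theorem must exclude.

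The missing idea is a second conjugation. Since $\prod_{\theta\in\Phi_a}(x-\theta)$ has rational coefficients and $\Phi_a\subset c+\mathbb Q\sqrt\Delta$, the automorphism $\sigma$ of $L$ fixing $\mathbb Q(\ii)$ and negating $\sqrt\Delta$ fixes $H$. It sends $f_\theta(H)\e_a$ to $f_{2c-\theta}(H)\e_a$, and likewise at $b$. Hence $\sigma(\beta_\theta)=\beta_{2c-\theta}$, so $\sigma(\beta_\theta/\beta_\eta)=(\beta_\theta/\beta_\eta)^{-1}$ and $\sigma(z)=z^{-1}=\bar z$. Thus $z$ is fixed by $\sigma$ composed with complex conjugation, whose fixed field is $\mathbb Q(\sqrt{-\Delta})$. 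This field depends only on $\Phi_a$, so it is common to all of $\Lambda$.

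The necessity half of the return-phase $-1$ statement is not proved at all; ``parity of the eigenvalues'' is not an argument. In an oriented graph the transfer phase $\gamma=\pm1$ is real, so the individual exponentials $e^{-\ii t\theta}=\gamma\beta_\theta$, not merely their ratios, lie in $L$. The support is symmetric about $0$ with $\theta/\delta\in\tfrac12+\mathbb Z$, so it generates $(\delta/2)\mathbb Z$. Hence $e^{-\ii t\delta/2}\in L$, and the same descent puts it in $J$. For general mixed graphs $\gamma$ is unrestricted, which is exactly why orientation is needed here.

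Sufficiency has no working mechanism in your proposal. Realizing the points $e^{2\pi\ii x}$ as eigenvalues of a rotation is not what is required. You need a rational skew matrix $A$ and a vector $\mathbf u$ whose orbit vectors $e^{x_jTA}\mathbf u$ are pairwise orthonormal. Equivalently, the return amplitude $\langle\mathbf u,e^{tA}\mathbf u\rangle$ must vanish at every difference $(x_j-x_\ell)T$, for arbitrarily many and possibly irrational $x_j$. A single rotation has return amplitude $\cos(\sqrt\Delta\,t)$ and its orbit spans a plane, so it gives at most two orthonormal states.

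The paper's device is a cosine polynomial with nonnegative rational coefficients, value $1$ at $0$, and zeros at all $2\pi(x_j-x_\ell)$. It is a product of factors $(\cos(nt)-c)/(1-c)$ with $c=\operatorname{Re}((z_j\bar z_\ell)^n)$ rational, which is where (iii) is used, and $n$ chosen so that $c<0$, which keeps the coefficients nonnegative. These coefficients become spectral weights on integer blocks $m\Omega$ with $\Omega^2=-\Delta I$, built from four-square representations. At the prescribed times each block equals $pI+q\Omega$ with $z_j^m=p+q\sqrt{-\Delta}$, so the orbit vectors are rational and orthonormal. Rational Householder reflections then turn them into coordinate vectors. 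For phase $-1$ one uses $\cos t\,G(2t)$, which has only odd frequencies.

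Only after this does your singleton-cell idea for completeness apply; it is the right one, since a vertex in a cell of size $s_v^2$ receives amplitude at most $1/s_v$. It still needs more than you state:
\begin{itemize}
\item A block joining two singleton cells must be a single edge with $rw_{uv}=1$, so edges among the selected vertices of a weighted quotient must first be eliminated. The paper passes to a Cartesian power, whose constant tuples are pairwise nonadjacent, and takes an odd power when the phase $-1$ must survive.
\item The cell degrees must satisfy the integrality and capacity conditions.
\item Connectivity should come from restricting to the source component. Attaching a gadget generally changes the evolution unless done by a transformation that provably fixes the relevant transition entries.
\end{itemize}
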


The theorem prescribes transfer times relative to the minimum vertex period, rather than its absolute value. Differences in $\Lambda$ determine all transfers within the class. For example, the set $\{0,1/3\}$ gives first transfer times $T/3$ and $2T/3$ in the two directions. Allowing undirected edges does not enlarge the family of admissible sets. For oriented realizations, either admissible return phase can be combined with prescribed source-to-target signs; the formula for all times and phases is~\eqref{eq:allphases}, at the end of the proof in Section~\ref{sec:unbounded}.

The classification gives complete classes of every finite size. It answers the existence question for a six-vertex class in an oriented graph posed by Chan, Pantangi, Razafimahatratra and Sin~\cite[Section~1]{CPRS}, and the question of transfer among more than four vertices posed by Song and Lin~\cite[Problem~36]{SL}. Theorem~\ref{thm:rationalblocks} describes all partitions defined by transfer at rational multiples of the minimum vertex period: the sharp bound on each part is six, or three for oriented graphs with return phase $-1$. Theorem~\ref{thm:simultaneous} characterizes simultaneous realization of several classes with prescribed relative minimum vertex periods and return phases.

Godsil and Lato~\cite[Section~8]{GL} also asked whether multiple state transfer can occur without switching automorphisms relating the transferring vertices, and how the first transfer time compares with the period in that case. Their Figure~3 gives a transferring pair with an irrational time ratio. Corollary~\ref{thm:main} gives complete classes of every size $k\geq2$ in which all transfers between distinct vertices occur at irrational multiples of the minimum vertex period and no switching automorphism maps a class vertex to a distinct class vertex. For $k\geq3$, this answers their multiple-state-transfer question.

The proof has three steps. First, an arithmetic argument places the unit-circle points corresponding to the normalized transfer times in one imaginary quadratic field. Second, a rational cosine polynomial supplies spectral weights for a rational skew-symmetric matrix with the prescribed transfers. Third, Theorem~\ref{thm:selection} removes the integer weights and selects exactly the required targets. It applies to any selected subset of a connected integer-weighted mixed graph, retains precisely its original PST relations among the selected vertices, and divides their transfer and return times by one common factor. It preserves orientation; odd Cartesian powers also preserve transfer and return phases in the oriented case.

Orthogonal transformations and equitable partitions are established tools: Bachman, Fredette, Fuller, Landry, Opperman, Tamon and Tollefson~\cite{BFF} used quotient graphs and equitable partitions, and Serra, Ferr\'on and Osenda~\cite{SFO} used Givens transformations for weighted spin networks. Here the cell sizes remove the weights and determine the complete target set. The realizations of admissible sets guarantee periodicity at the prescribed vertices. Global periodicity is a separate requirement.

Sections~\ref{sec:prelim}--\ref{sec:unbounded} prove the classification, and Section~\ref{sec:consequences} develops its consequences. Section~\ref{sec:examples} illustrates the results with classes of sizes five and six, and with a nine-vertex globally periodic graph whose first transfer times are $T/3$ and $2T/3$. The latter answers~\cite[Problem~35]{SL} negatively even under global periodicity. Section~\ref{sec:discussion} states the remaining questions; Appendix~\ref{app:data} gives the explicit cell data.

\section{Periods and transfer classes}\label{sec:prelim}
The Hermitian adjacency matrix of a mixed graph~\cite{GM} has zero diagonal and entries in $\{0,1,\ii,-\ii\}$. An undirected edge $ab$ gives $H_{a,b}=H_{b,a}=1$. An arc from $a$ to $b$ gives $H_{a,b}=\ii$ and $H_{b,a}=-\ii$. For an oriented graph, we write $H=\ii A$, where $A$ is the real skew adjacency matrix.

We will also use explicitly weighted matrices. An \emph{integer-weighted mixed graph} has a simple underlying graph without loops, but a nonzero entry may be $w\eta$, where $w\in\mathbb Z_{>0}$ and $\eta\in\{1,\ii,-\ii\}$. The opposite entry is its complex conjugate. The general Hermitian matrices in Section~\ref{sec:arithmetic} and the rational skew-symmetric matrices in Section~\ref{sec:unbounded} are also intermediate models, not necessarily adjacency matrices of unweighted graphs.

We write $I_n$ and $O_n$ for the identity and zero matrices of order $n$, and omit the subscript when the order is clear. The vectors $\e_a$ and $\one_n$ denote a standard basis vector and the all-ones vector, respectively. We use $M^{\mathsf T}$ and $M^*$ for the transpose and conjugate transpose of a matrix $M$, respectively. The symbols $\otimes$ and $\oplus$ denote the Kronecker product and direct sum, respectively; we use the convention $I_n^{\otimes0}=1$. For vectors, $\langle\mathbf u,\mathbf v\rangle=\mathbf u^*\mathbf v$, and $\|\mathbf u\|$ is the associated norm. For a matrix, its underlying graph has an edge between distinct indices whenever the corresponding entry is nonzero.

We use the spectral notation for state transfer in~\cite{GPeriodic,GL}. For any Hermitian matrix $H$, let $E_\theta$ be the orthogonal projection onto its $\theta$-eigenspace, and write
\[
 H=\sum_\theta\theta E_\theta,\qquad
 U_H(t)=e^{-\ii tH},\qquad
 \Phi_a(H)=\{\theta:E_\theta\e_a\ne0\}.
\]
The set $\Phi_a(H)$ is the \emph{eigenvalue support} of $a$; we write $\Phi_a$ when $H$ is clear. When $H$ is the Hermitian adjacency matrix of a graph $X$, we also write $U_X$ and $\Phi_a(X)$ for $U_H$ and $\Phi_a(H)$, respectively. PST occurs from $a$ to $b$ at time $t$ if
\[
 U_H(t)\e_a=\gamma\e_b,\qquad |\gamma|=1,
\]
or equivalently $|U_H(t)_{b,a}|=1$. The scalar $\gamma$ is the phase factor. In the oriented case, $U_H(t)=e^{tA}$ is real, so $\gamma\in\{1,-1\}$.

We allow all real times when defining the \emph{complete PST class} of $a$:
\[
 \T_H(a)=\{b:|U_H(t)_{b,a}|=1\text{ for some }t\in\R\}.
\]
For a graph $X$ with Hermitian adjacency matrix $H$, we also write $\T_X(a)=\T_H(a)$. Transfer at time zero gives reflexivity, reversing time gives symmetry, and composing transfers gives transitivity. Thus the sets $\T_H(a)$ are equivalence classes.

A vertex $a$ is \emph{nonstationary} if $|\Phi_a|\geq2$. A real number $t$ is a return time at $a$ if $U_H(t)\e_a=\gamma\e_a$ for some $|\gamma|=1$. The vertex is \emph{periodic} if it has a positive return time. By the spectral decomposition, $t$ is a return time exactly when
\[
 t(\theta-\eta)\in2\pi\mathbb Z\qquad(\theta,\eta\in\Phi_a).
\]
If a nonstationary vertex is periodic, these return times form $T\mathbb Z$ for a unique $T>0$. We call $T$ the \emph{minimum vertex period}. For a periodic source $a$, adding sufficiently many periods makes any real transfer time positive. Thus $\T_H(a)$ also consists exactly of the vertices reached at positive times in this case. The scalar $\omega$ in $U_H(T)\e_a=\omega\e_a$ is the \emph{return phase at the minimum vertex period}. Unless stated otherwise, every periodic source below is nonstationary. For a general unit vector, we use the same definitions of eigenvalue support and periodicity, replacing $\e_a$ by that vector.

The following statement for vertex states is a consequence of Godsil's theorem on algebraic state transfer.

\begin{lemma}[{\cite[Theorem~6.1]{GReal}}]\label{lem:algebraicpst}
Let $H$ be a Hermitian matrix with algebraic entries. If PST occurs from $a$ to a distinct vertex $b$, then
\[
 \frac{\theta-\eta}{\theta'-\eta'}\in\mathbb Q
 \qquad(\theta,\eta,\theta',\eta'\in\Phi_a(H),\ \theta'\ne\eta').
\]
In particular, both vertices are nonstationary and periodic.
\end{lemma}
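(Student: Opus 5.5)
The statement has three parts: the ratio condition, nonstationarity of $a$ and $b$, and periodicity of $a$ and $b$. I would reduce all three to the periodicity conclusion of \cite[Theorem~6.1]{GReal}, plus the spectral description of return times recalled above. I take that theorem to say that, for a Hermitian matrix with algebraic entries, PST from a vertex $a$ to a distinct vertex $b$ forces $a$ to be periodic. I have not checked this reading against the source, so it is an assumption of the plan. No independent arithmetic argument is needed if it holds. The cited theorem does the real work, since for complex Hermitian $H$ a transfer $U_H(\tau)\e_a=\gamma\e_b$ does not by itself give a return to $a$, unlike the real symmetric case where $U(2\tau)\e_a=\gamma^2\e_a$.

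First, nonstationarity. If $\Phi_a=\{\theta\}$, then $\e_a=E_\theta\e_a$ and $U_H(t)\e_a=e^{-\ii t\theta}\e_a$ for all $t$. This vector is never a unimodular multiple of $\e_b$ with $b\ne a$, so $|\Phi_a|\geq2$. For $b$ I would use time reversal. From $U_H(\tau)\e_a=\gamma\e_b$ we get $U_H(-\tau)\e_b=\bar\gamma\e_a$, which is PST from $b$ to $a$. Applying the same argument, and the cited theorem with $b$ as source, gives that $b$ is also nonstationary and periodic.

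Next, the ratio condition. Since $a$ is periodic and nonstationary, there is $T>0$ with $T(\theta-\eta)\in2\pi\mathbb Z$ for all $\theta,\eta\in\Phi_a$. Write $T(\theta-\eta)=2\pi m$ and $T(\theta'-\eta')=2\pi m'$ with $m,m'\in\mathbb Z$. When $\theta'\ne\eta'$ we have $m'\ne0$, so dividing gives $(\theta-\eta)/(\theta'-\eta')=m/m'\in\mathbb Q$.

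The main obstacle is only bookkeeping: making sure the hypotheses of \cite[Theorem~6.1]{GReal} match the vertex-state setting. That means checking that its notion of PST between states specializes to $\e_a,\e_b$ with an arbitrary unimodular phase, and that "algebraic entries" is exactly what it requires. If instead that theorem is stated directly as the rational-ratio condition on $\Phi_a$, I would reverse the argument. Fix $\theta'\ne\eta'$ in $\Phi_a$ and set $T=2\pi q/(\theta'-\eta')$, where $q$ is a common denominator of the finitely many rational ratios. Then $T(\theta-\eta)\in2\pi\mathbb Z$ for all $\theta,\eta\in\Phi_a$, so $T$ is a positive return time and $a$ is periodic. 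The claim for $b$ again follows by time reversal.
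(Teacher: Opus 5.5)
Your proposal is correct and takes essentially the paper's approach: the paper gives no separate proof, but cites Godsil's Theorem~6.1 in \cite{GReal} for the rational-ratio condition on $\Phi_a(H)$ and treats nonstationarity and periodicity of both vertices as immediate consequences. Your deductions of those consequences are sound and are the routine bookkeeping the paper leaves implicit: a one-point support cannot transfer, time reversal (or equivalently $\Phi_b=\Phi_a$) handles $b$, and the ratio condition is equivalent to a positive return time once you take $\theta'>\eta'$ so that $T>0$.
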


Lemma~\ref{lem:algebraicpst} ensures that the classification below applies to every nontrivial complete PST class of a mixed graph. Singleton classes are included when their source is nonstationary and periodic.

A \emph{global period} is a number $T>0$ such that $U_H(T)=\gamma I$ for some $|\gamma|=1$. The least such positive number, when it exists, is the \emph{minimum global period}. All periods in this paper allow a phase factor. Periodicity at one vertex does not imply global periodicity. For a connected graph, requiring $U_H(T)$ to be diagonal is equivalent to this definition: commutation with $H$ forces its diagonal entries to agree along every edge.

We refer to $\operatorname{span}\{\e_b\}$ as the coordinate line at $b$. Mapping one such line to another expresses PST without specifying a phase factor. The next lemma shows that differences of normalized transfer times determine all transfers within a class. In expressions involving real times, each normalized transfer time is first replaced by its representative in $[0,1)$; subsequent arithmetic in that expression is performed in $\mathbb R$.

\begin{lemma}\label{lem:labels}
Let $C=\T_H(a)$, where $a$ has minimum vertex period $T$. There are distinct normalized transfer times $x_b\in\R/\mathbb Z$, $b\in C$, with $x_a=0$, such that PST occurs from $b$ to $c$ at time $t$ if and only if
\begin{equation}\label{eq:labeldifference}
 t/T\equiv x_c-x_b\pmod1.
\end{equation}
All vertices of $C$ have the same return times, return phases, and eigenvalue support. More precisely, write $U_H(T)\e_a=\omega\e_a$ and $U_H(Tx_b)\e_a=\gamma_b\e_b$, using representatives $x_b\in[0,1)$. Then
\begin{equation}\label{eq:classphases}
 U_H\bigl(T(x_c-x_b+n)\bigr)\e_b
   =\omega^n\gamma_c\gamma_b^{-1}\e_c
 \qquad(b,c\in C,\ n\in\mathbb Z).
\end{equation}
\end{lemma}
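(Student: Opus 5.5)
We define $x_b$ from the source $a$ and then transport everything along the unitary evolution. For each $b\in C$ we fix a time $t_b$ with $U_H(t_b)\e_a=\gamma_b\e_b$, and set $x_b=t_b/T \bmod 1$ with $x_a=0$.

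\emph{Well-definedness and distinctness.} If $a$ reaches $b$ at times $t$ and $t'$, then $U_H(t'-t)\e_b$ is a multiple of $\e_b$. We first need that $b$ has the same return times as $a$. Since $U_H(t)$ is unitary and commutes with every $E_\theta$, we have $E_\theta\e_b=\gamma_b^{-1}U_H(t_b)E_\theta\e_a$. Hence $\Phi_b=\Phi_a$, and also $\|E_\theta\e_b\|=\|E_\theta\e_a\|$. The return-time criterion $t(\theta-\eta)\in2\pi\mathbb Z$ depends only on the eigenvalue support, so $b$ has return times exactly $T\mathbb Z$.

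It follows that $t'-t\in T\mathbb Z$, and $x_b$ is well defined. If $x_b=x_c$, then $\e_b$ and $\e_c$ are both multiples of $U_H(t_b)\e_a$ up to a return at $a$, so $b=c$.

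\emph{Transfers between class vertices.} We conjugate by the transfer from $a$. Using representatives in $[0,1)$, we compute
\[
U_H\bigl(T(x_c-x_b+n)\bigr)\e_b=\gamma_b^{-1}U_H\bigl(Tx_c+Tn\bigr)U_H(-Tx_b)U_H(Tx_b)\e_a .
\]
This equals $\gamma_b^{-1}U_H(Tx_c)U_H(Tn)\e_a=\gamma_b^{-1}\omega^n\gamma_c\e_c$. This gives \eqref{eq:classphases} and the ``if'' direction of \eqref{eq:labeldifference}.

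For the converse, suppose $b$ reaches $c$ at time $t$. Then $a$ reaches $c$ at time $Tx_b+t$, by composing transfers. By well-definedness, $Tx_b+t\equiv Tx_c \pmod{T}$, which is \eqref{eq:labeldifference}.

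\emph{Return data.} Taking $c=b$ in \eqref{eq:classphases} shows that $b$ returns at $Tn$ with phase $\omega^n$. Together with the return times $T\mathbb Z$ shown above, every vertex of $C$ has the same minimum vertex period, the same return times and the same return phases, and the eigenvalue supports coincide.

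The only delicate point is the first step: $b$'s return times must be known before the $x_b$ can be shown to be well defined. This is why the eigenvalue-support equality is proved first, and it then feeds every later step.
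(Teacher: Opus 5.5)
Your proof is correct and essentially follows the paper's argument (the group law, composing with the transfer from $a$, and applying $E_\theta$ to the transfer equation); one small fix is to choose $t_b\in[0,T)$, which is possible because adding multiples of $T$ to a transfer time gives another transfer time, so that your $\gamma_b$ is the phase at $Tx_b$ that the statement uses. The step you call delicate is unnecessary: if $U_H(t)\e_a=\gamma\e_b$ and $U_H(t')\e_a=\gamma'\e_b$, then $U_H(t'-t)\e_a=U_H(-t)U_H(t')\e_a=\gamma'\gamma^{-1}\e_a$, so $t'-t$ is already a return time at $a$, and this is how the paper gets well-definedness without first comparing eigenvalue supports.
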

\begin{proof}
If two times reach the line spanned by $\e_b$, their difference is a return time at $a$, hence an integer multiple of $T$. A single time cannot reach two distinct coordinate lines. These observations define unique, distinct normalized transfer times. For their representatives in $[0,1)$, write $U_H(Tx_b)\e_a=\gamma_b\e_b$. The group law gives
\[
 U_H(T(x_c-x_b))\e_b=\gamma_c\gamma_b^{-1}\e_c.
\]
Commuting a return with the transfer between $a$ and $b$ proves equality of their return times and phases, and gives~\eqref{eq:classphases}. Adding any integer multiple of $T$ to the displayed transfer therefore proves the sufficiency of~\eqref{eq:labeldifference}. Conversely, composing any transfer from $b$ to $c$ with the transfer from $a$ to $b$ gives a transfer from $a$ to $c$, proving necessity. Applying $E_\theta$ to the transfer equation proves equality of the eigenvalue supports.
\end{proof}

\section{Arithmetic restrictions}\label{sec:arithmetic}
Periodicity restricts the eigenvalue gaps: all phases must agree after one period. We first determine the possible gaps, then use them to restrict the normalized transfer times. The ratio condition and quadratic form of the eigenvalue support at a periodic vertex are classical in real and oriented quantum walks; see~\cite{GPeriodic,GL}. The version below works over any imaginary quadratic coefficient field. It requires neither an eigenvalue support symmetric about zero nor a prescribed phase factor.

Fix a squarefree positive integer $d$ and put $K=\mathbb Q(\sqrt{-d})\subset\mathbb C$. The matrices in this section are Hermitian with entries in $K$, with no integrality or zero-diagonal assumption. Mixed graphs correspond to $d=1$. We use the standard facts about splitting fields and fixed fields from~\cite[Chapters~2--3]{Milne}.

\begin{lemma}\label{lem:quadratic}
Let $H$ be a Hermitian matrix over $K$, and let $u$ be a nonstationary periodic vertex with eigenvalue support $\Phi_u$. There is a unique positive number $\delta$ such that
\begin{equation}\label{eq:gaplattice}
 \operatorname{span}_{\mathbb Z}\{\theta-\eta:\theta,\eta\in\Phi_u\}
 =\delta\mathbb Z.
\end{equation}
The minimum vertex period is $T=2\pi/\delta$. There are $q\in\mathbb Q_{>0}$ and a positive squarefree integer $\Delta$ such that
\[
 \delta=q\sqrt\Delta,\qquad
 \Phi_u\subset c+\mathbb Q\sqrt\Delta,\qquad
 c=\frac1{|\Phi_u|}\sum_{\theta\in\Phi_u}\theta\in\mathbb Q.
\]
The case $\Delta=1$ gives $\Phi_u\subset\mathbb Q$. If $\Delta>1$, the nontrivial conjugation of $\mathbb Q(\sqrt\Delta)$ sends each $\theta\in\Phi_u$ to $2c-\theta$.
\end{lemma}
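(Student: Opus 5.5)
We first establish the lattice statement. Periodicity at $u$ means some $t>0$ satisfies $t(\theta-\eta)\in2\pi\mathbb Z$ for all $\theta,\eta\in\Phi_u$. Hence every gap lies in $(2\pi/t)\mathbb Z$, so the $\mathbb Z$-span $L$ of the gaps is a nonzero subgroup of a cyclic group. It is therefore $\delta\mathbb Z$ for a unique $\delta>0$, and $L\neq0$ because $u$ is nonstationary. The return times are exactly those $t$ with $tL\subset2\pi\mathbb Z$, which is $(2\pi/\delta)\mathbb Z$, so $T=2\pi/\delta$. All ratios of gaps are rational.

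Next we use the Galois action. The characteristic polynomial of $H$ has coefficients in $K\cap\mathbb R=\mathbb Q$, since it is real ($H$ Hermitian) and has entries in $K$. So the eigenvalues are real algebraic numbers, permuted by $G=\mathrm{Gal}(F/\mathbb Q)$, where $F$ is the splitting field of that polynomial together with $K$.

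The key point is that $\Phi_u$ is stable under $G$ up to the field action. Write $E_\theta$ as a polynomial in $H$ over $\mathbb Q(\theta)$, namely Lagrange interpolation. For $\sigma\in G$, apply $\sigma$ entrywise to $\e_u^{\mathsf T}E_\theta\e_u=(E_\theta)_{u,u}$. Since $H$ has entries in $K$ and $\sigma$ maps $K$ to itself, $\sigma(H)$ is either $H$ or $\overline H=H^{\mathsf T}$. In both cases, $\sigma\bigl((E_\theta)_{u,u}\bigr)$ equals the $(u,u)$ entry of the projection for $\sigma(\theta)$, because the transpose has the same diagonal. Nonvanishing is preserved, so $\sigma(\Phi_u)=\Phi_u$.

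Now fix $\theta_0\neq\eta_0$ in $\Phi_u$. Every gap equals $r(\theta_0-\eta_0)$ with $r\in\mathbb Q$. For $\sigma\in G$, $\sigma(\theta_0-\eta_0)$ is again a gap, so $\sigma(\theta_0-\eta_0)=r_\sigma(\theta_0-\eta_0)$ with $r_\sigma\in\mathbb Q$. Applying this relation $|G|$ times gives $r_\sigma^{m}=1$ for the order $m$ of $\sigma$, hence $r_\sigma=\pm1$. Put $\alpha=\theta_0-\eta_0$. Then $\alpha^2$ is fixed by $G$, so $\alpha^2\in\mathbb Q_{>0}$. Write $\alpha=q'\sqrt\Delta$ with $\Delta$ squarefree and $q'\in\mathbb Q$. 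Every gap lies in $\mathbb Q\sqrt\Delta$, so $\delta=q\sqrt\Delta$ with $q>0$ rational.

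The centroid $c$ is fixed by $G$, since $G$ permutes $\Phi_u$, so $c\in\mathbb Q$. Each $\theta-c$ is an average of gaps, hence lies in $\mathbb Q\sqrt\Delta$. This gives $\Phi_u\subset c+\mathbb Q\sqrt\Delta$. If $\Delta=1$ everything is rational. If $\Delta>1$, the conjugation $\sqrt\Delta\mapsto-\sqrt\Delta$ acts on $\theta=c+s\sqrt\Delta$ by sending it to $c-s\sqrt\Delta=2c-\theta$.

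The main obstacle is the $G$-invariance of $\Phi_u$. It needs care because $K$ is not real and the Galois elements may act as complex conjugation on the entries. The transpose argument handles this. The step also requires that $E_\theta$ be expressed over $\mathbb Q(\theta)$, so that conjugation transports the spectral projection correctly.
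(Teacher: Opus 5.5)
Your proof is correct. Its architecture matches the paper's: the gap lattice, Galois stability of $\Phi_u$, a gap sent to $\pm$ itself, a rational centroid, and centered eigenvalues in $\mathbb Q\sqrt\Delta$. The real difference is in the step you flag as the main obstacle.

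The paper avoids that obstacle entirely. It takes the monic polynomial $m_u$ of least degree with $m_u(H)\e_u=0$. This polynomial has coefficients in $K$, since it is determined by linear dependence among $\e_u,H\e_u,H^2\e_u,\ldots$ over $K$. By the spectral theorem it equals $\prod_{\theta\in\Phi_u}(x-\theta)$. Its roots are real, so its coefficients lie in $K\cap\mathbb R=\mathbb Q$. This gives Galois stability of $\Phi_u$, and $c\in\mathbb Q$ as a coefficient, in one line. There is no need to adjoin $K$ to the splitting field or to track automorphisms acting as complex conjugation on matrix entries.

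Your route through the diagonal entries $(E_\theta)_{u,u}$ and the fact $\sigma(H)\in\{H,H^{\mathsf T}\}$ is valid. It proves slightly more: $\sigma\bigl((E_\theta)_{u,u}\bigr)=(E_{\sigma(\theta)})_{u,u}$, so the spectral weights are transported as well.

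You also do not need $E_\theta$ to be written over $\mathbb Q(\theta)$, though that is true. The full Lagrange product over the spectrum of $H$ already transforms correctly, because $\sigma$ permutes the roots of the rational characteristic polynomial.

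For $\delta^2\in\mathbb Q$, the paper argues directly: each automorphism preserves the group $\delta\mathbb Z$, so it sends the generator to $\pm\delta$. Your version fixes a gap $\alpha$, writes $\sigma(\alpha)=r_\sigma\alpha$ with $r_\sigma\in\mathbb Q$, and uses $r_\sigma^m=1$. This is an equivalent variant.
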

\begin{proof}
Let $m_u$ be the monic polynomial of least degree satisfying $m_u(H)\e_u=0$. Linear algebra over $K$ shows that $m_u\in K[x]$. The spectral theorem gives
\[
 m_u(x)=\prod_{\theta\in\Phi_u}(x-\theta).
\]
All roots are real, so all coefficients are real. They therefore lie in $K\cap\mathbb R=\mathbb Q$. It follows that Galois automorphisms permute $\Phi_u$ and that its mean $c$ is rational.

A period $t_0$ satisfies $t_0(\theta-\eta)\in2\pi\mathbb Z$ for every pair in the support. Thus all nonzero differences have rational ratios. Their finitely generated additive group is $\delta\mathbb Z$ for a unique $\delta>0$, and the group of return times is $(2\pi/\delta)\mathbb Z$.

Since $c$ is the mean of the support, $\theta-c$ is the average of the gaps $\theta-\eta$, with $\eta\in\Phi_u$. It is therefore a rational multiple of $\delta$. Let $L$ be the splitting field of $m_u$ over $\mathbb Q$. The generator $\delta$ lies in $L$, since it is an integer combination of the eigenvalue gaps. Since $\Phi_u$ is Galois-stable, every automorphism of $L$ preserves the group in~\eqref{eq:gaplattice}. It therefore sends its generator $\delta$ to $\delta$ or $-\delta$. Thus $\delta^2\in\mathbb Q$. If $\delta$ is rational, then $\Phi_u\subset\mathbb Q$. Otherwise $\delta=q\sqrt\Delta$ with $q\in\mathbb Q_{>0}$ and squarefree $\Delta>1$. Conjugation in $\mathbb Q(\sqrt\Delta)$ fixes $c$ and negates each centered eigenvalue, as claimed.
\end{proof}

We now connect the gap $\delta$ to a transfer at time $\tau$. The unit-circle point $e^{-\ii\delta\tau}$ records the normalized transfer time. It is distinct from the phase factor $\gamma$, which will cancel in the proof. The ratio $\tau/T$ need not be rational.

\begin{lemma}\label{lem:field}
With the notation of Lemma~\ref{lem:quadratic}, suppose $U_H(\tau)\e_u=\gamma\e_v$, where $|\gamma|=1$. Define
\[
 z=e^{-\ii\delta\tau}=e^{-2\pi\ii\tau/T}.
\]
Then
\begin{equation}\label{eq:targetfield}
 z\in J:=\mathbb Q(\sqrt{-d\Delta}),
\end{equation}
with $\Delta=1$ when $\delta$ is rational.
If $H=\ii A$ with $A$ rational and real skew-symmetric, and the return phase at $T$ is $-1$, then also
\begin{equation}\label{eq:halftimefield}
 e^{-\pi\ii\tau/T}\in J.
\end{equation}
\end{lemma}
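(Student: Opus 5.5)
We prove \eqref{eq:targetfield} by expressing $z$ as a ratio of two nonzero numbers lying in the field $L=K(\sqrt\Delta)$ and then using Galois invariance to place $z$ in the fixed field $J$.

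\emph{Setting up the ratio.} Apply $E_\theta$ to the transfer equation to get $e^{-\ii\tau\theta}E_\theta\e_u=\gamma E_\theta\e_v$ for every $\theta\in\Phi_u$. Pairing with $\e_v$ gives
\[
 e^{-\ii\tau\theta}(E_\theta)_{v,u}=\gamma(E_\theta)_{v,v},
\]
and $(E_\theta)_{v,v}=\|E_\theta\e_v\|^2>0$. Here $\Phi_v=\Phi_u$, by the same projection argument used in Lemma~\ref{lem:labels}. Pick $\theta,\eta\in\Phi_u$ with $\theta-\eta=\delta$; if no such pair exists, use a $\mathbb Z$-combination of gaps and multiply the corresponding ratios. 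Dividing the two identities for $\theta$ and $\eta$ cancels $\gamma$ and gives
\[
 e^{-\ii\tau(\theta-\eta)}=\frac{(E_\theta)_{v,v}\,(E_\eta)_{v,u}}{(E_\eta)_{v,v}\,(E_\theta)_{v,u}}.
\]
Since every element of $\delta\mathbb Z$ is an integer combination of gaps, $z$ is a product of such ratios and their inverses. The projections $E_\theta$ are polynomials in $H$ with coefficients in $K(\theta)\subseteq L$, because $\Phi_u\subset c+\mathbb Q\sqrt\Delta$ by Lemma~\ref{lem:quadratic}. We may replace $E_\theta$ by its restriction to the cyclic module generated by $\e_u$, which is spanned by $H^k\e_u$ and involves only eigenvalues in $\Phi_u$. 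Hence $z\in L$.

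\emph{Descending to $J$.} If $\Delta=1$, then $L=K=J$ and we are done. Otherwise $L=\mathbb Q(\sqrt{-d},\sqrt\Delta)$ is biquadratic, and its fixed field under the automorphism $\sigma$ negating both $\sqrt{-d}$ and $\sqrt\Delta$ is $J$. We must show $\sigma(z)=z$.

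Complex conjugation acts on $L$ as the automorphism fixing $\sqrt\Delta$ and negating $\sqrt{-d}$. Since $|z|=1$, it sends $z$ to $z^{-1}$. Now consider the automorphism $\rho$ fixing $\sqrt{-d}$ and negating $\sqrt\Delta$. It fixes the entries of $H$ and, by Lemma~\ref{lem:quadratic}, sends $\theta\mapsto2c-\theta$. It therefore maps $E_\theta$ (written as a polynomial in $H$) to $E_{2c-\theta}$, and maps a gap $\theta-\eta$ to $\eta'-\theta'$ with $\theta'=2c-\theta$, $\eta'=2c-\eta$. Applying $\rho$ to the ratio formula gives the same formula with the roles of the two eigenvalues reversed, so $\rho(z)=z^{-1}$. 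Then $\sigma=\rho\circ\overline{\phantom{x}}$ fixes $z$, which gives $z\in J$. I expect this Galois bookkeeping to be the main obstacle. In particular, one must check carefully that $\rho$ really acts on the matrix entries of the projections as claimed, and that $(E_\theta)_{v,u}\neq0$ so the divisions are legitimate. The latter holds because $E_\theta\e_v$ is a nonzero multiple of $E_\theta\e_u$.

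\emph{The half-time statement.} Now $d=1$, $H=\ii A$ with $A$ rational, and the return phase is $-1$. Since $H$ is $\ii$ times a real skew-symmetric matrix, its spectrum is symmetric about zero, and the eigenvalue support is symmetric, so $c=0$. Return phase $-1$ at $T$ means $e^{-\ii T\theta}=-1$ for all $\theta\in\Phi_u$, so each $\theta$ is an odd multiple of $\delta/2$. Rerun the ratio argument with $\theta$ and $-\theta$ for $\theta=\delta/2$ up to integer combinations, i.e. with gaps that are odd multiples of $\delta/2$ paired through a single eigenvalue rather than a difference. Concretely, using $U(\tau)=e^{\tau A}$ real, the phase $\gamma=\pm1$ is real. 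The single identity $e^{-\ii\tau\theta}=\gamma(E_\theta)_{v,v}/(E_\theta)_{v,u}$ then places $e^{-\ii\tau\theta}$ in $L$, and the same Galois argument (with $\rho(\theta)=-\theta$) shows it is fixed by $\sigma$. Taking the odd combination yields $e^{-\ii\tau\delta/2}=e^{-\pi\ii\tau/T}\in J$.
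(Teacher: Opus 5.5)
Your proof is correct and is essentially the paper's argument: you place $\gamma^{-1}e^{-\ii\tau\theta}=(E_\theta)_{v,v}/(E_\theta)_{v,u}$ in $L=K(\sqrt\Delta)$ and cancel $\gamma$ through gap ratios. You then note that both the automorphism negating $\sqrt\Delta$ and complex conjugation invert $z$, so $z$ lies in the fixed field $J$ of their product. The step you flagged is fine: $E_\theta=g(H)/g(\theta)$ with $g(x)=m_H(x)/(x-\theta)\in K(\theta)[x]$ for the minimal polynomial $m_H$ of $H$, or equivalently one can use the Lagrange polynomial on $\Phi_u$ as in the paper. The half-time part also matches the paper, with the ``odd combination'' you left implicit being $\delta/2=\theta-k\delta$ for any supported $\theta=(2k+1)\delta/2$.
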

\begin{proof}
PST gives equal eigenvalue supports at $u$ and $v$. Set $L=K(\sqrt\Delta)$, so $L=K$ when $\Delta=1$. For each $\theta\in\Phi_u$, Lagrange interpolation on $\Phi_u$ gives
\[
 E_\theta\e_u=f_\theta(H)\e_u,\qquad
 E_\theta\e_v=f_\theta(H)\e_v,
 \qquad
 f_\theta(x)=\prod_{\eta\in\Phi_u\setminus\{\theta\}}
 \frac{x-\eta}{\theta-\eta}.
\]
Both vectors have entries in $L$. The spectral PST relation is
\[
 E_\theta\e_v=\beta_\theta E_\theta\e_u,
 \qquad \beta_\theta=\gamma^{-1}e^{-\ii\tau\theta}.
\]
Taking a nonzero coordinate shows that $\beta_\theta\in L$. Therefore
\[
 \frac{\beta_\theta}{\beta_\eta}=e^{-\ii\tau(\theta-\eta)}\in L.
\]
By~\eqref{eq:gaplattice}, an integer combination of the differences equals $\delta$. Multiplying the corresponding ratios, with their integer exponents, gives $z\in L$. This step cancels the unrestricted phase $\gamma$.

If $\delta$ is rational, then $L=K=J$. Otherwise let $\sigma$ be the automorphism of the biquadratic field $L=K(\sqrt\Delta)$ that fixes $K$ and negates $\sqrt\Delta$. It fixes $H$ and maps $f_\theta(H)\e_u$ to $f_{2c-\theta}(H)\e_u$, and similarly at $v$. Applying it to the preceding vector identity gives
\[
 \sigma(\beta_\theta)=\beta_{2c-\theta},\qquad
 \sigma\!\left(\frac{\beta_\theta}{\beta_\eta}\right)
 =\left(\frac{\beta_\theta}{\beta_\eta}\right)^{-1}.
\]
The same integer combination yields $\sigma(z)=z^{-1}$. Complex conjugation also sends $z$ to $z^{-1}$, since $|z|=1$. Thus $z$ is fixed by the product of complex conjugation and $\sigma$. This product negates both $\sqrt{-d}$ and $\sqrt\Delta$, so its fixed field is $\mathbb Q(\sqrt{-d\Delta})=J$.

For the final assertion, complex conjugation sends $H$ to $-H$, so $\Phi_u$ is symmetric about zero and $c=0$. The return phase $-1$ gives $\theta/\delta\in\frac12+\mathbb Z$ for every $\theta\in\Phi_u$. The group generated by $\Phi_u$ contains $\delta$, since $\delta$ is an integer combination of gaps. Subtracting an integer multiple of $\delta$ from any supported eigenvalue then gives $\delta/2$. All supported eigenvalues lie in $(\delta/2)\mathbb Z$, so the group is exactly $(\delta/2)\mathbb Z$. The transfer phase $\gamma$ is $\pm1$, so the relation $e^{-\ii\tau\theta}=\gamma\beta_\theta$ puts each of these exponentials in $L$. Taking the integer combination giving $\delta/2$ shows that $e^{-\ii\tau\delta/2}\in L$. If $\delta$ is irrational, the automorphism $\sigma$ sends each $\theta$ to $-\theta$ and each exponential to its inverse; complex conjugation does the same. The preceding fixed-field argument therefore puts $e^{-\ii\tau\delta/2}$ in $J$. The rational case follows from $L=J$.
\end{proof}

\section{Unweighted realization and target selection}\label{sec:selection}
The weighted matrices constructed later may have extra PST targets. We need to remove those targets and the edge weights at the same time. We use the equitable-partition method for state transfer~\cite{BFF}, with cell sizes chosen to control the target set. The construction replaces each vertex by an independent cell. A selected vertex becomes a singleton cell. Every other vertex becomes a larger cell, which spreads the incoming amplitude equally among its vertices. No individual vertex in that larger cell can then be a PST target of a singleton source.

\begin{lemma}\label{lem:cells}
Let $B$ be the Hermitian adjacency matrix of a connected integer-weighted mixed graph. Choose $r>0$ and positive integers $s_v$, at least one of which is one. Suppose that for every edge $uv$, of weight $w_{uv}=|B_{u,v}|$,
\begin{equation}\label{eq:cellconditions}
 d_{uv}=r w_{uv}\frac{s_v}{s_u}\in\mathbb Z_{>0},\qquad
 d_{vu}=r w_{uv}\frac{s_u}{s_v}\in\mathbb Z_{>0},\qquad
 r w_{uv}\leq s_us_v.
\end{equation}
There is a connected mixed graph $Y$ with independent cells $V_v$ of sizes $s_v^2$. Its Hermitian adjacency matrix satisfies
\[
 H_YQ=QrB,\qquad
 Q_{x,v}=\begin{cases}1/s_v,&x\in V_v,\\0,&x\notin V_v.\end{cases}
\]
If $V_p$ is a singleton, identify its vertex with $p$. Then, for $x\in V_v$,
\begin{equation}\label{eq:cellamplitude}
 U_Y(t)_{x,p}=\frac{U_B(rt)_{v,p}}{s_v}.
\end{equation}
If the input graph is oriented, then so is $Y$.
\end{lemma}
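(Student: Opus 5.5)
We build $Y$ by replacing each edge $uv$ of the underlying graph of $B$ with a biregular bipartite graph between the cells $V_u$ and $V_v$. Every vertex of $V_u$ gets exactly $d_{uv}$ neighbours in $V_v$, and every vertex of $V_v$ gets exactly $d_{vu}$ neighbours in $V_u$. All these edges carry the common phase $\eta_{uv}=B_{u,v}/w_{uv}\in\{1,\ii,-\ii\}$. Concretely, an undirected edge of $B$ becomes undirected edges, and an arc becomes arcs directed from $V_u$ to $V_v$ (or reversed, according to the sign of $\eta_{uv}$).

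Such a biregular bipartite graph exists because the degree counts balance:
\[
s_u^2 d_{uv}=r w_{uv}s_us_v=s_v^2d_{vu}.
\]
The degrees are also feasible, since $d_{uv}\le s_v^2$ is equivalent to $rw_{uv}\le s_us_v$, and similarly $d_{vu}\le s_u^2$. For the construction, take a circulant-type bipartite graph: index $V_u$ and $V_v$ cyclically and join each vertex to a consecutive block. A cleaner alternative is any bipartite graph with prescribed degree sequences, whose existence follows from Gale--Ryser for constant degrees. No edges are placed inside a cell, so the cells are independent, and $Y$ is simple because each pair $u\ne v$ is handled once. Since the input graph has no undirected edges when it is oriented, $Y$ is then oriented as well.

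Next comes the intertwining identity $H_YQ=QrB$. For $x\in V_u$, the $(x,v)$ entry of $H_YQ$ is $\eta_{uv}d_{uv}/s_v$, and this equals $rw_{uv}\eta_{uv}/s_u=rB_{u,v}/s_u=(QrB)_{x,v}$. Entries for nonadjacent cells, and for $v=u$, vanish on both sides because $B$ has zero diagonal. Hermiticity of $H_Y$ is automatic from the construction, since each arc contributes conjugate entries in the two directions. From the intertwining we get $H_Y^kQ=Q(rB)^k$ for all $k$, and therefore $U_Y(t)Q=QU_B(rt)$.

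Now let $V_p=\{p\}$. The $p$-th column of $Q$ is $\e_p$ because $s_p=1$, so
\[
U_Y(t)\e_p=QU_B(rt)\e_p .
\]
Reading off the coordinate at $x\in V_v$ gives $U_B(rt)_{v,p}/s_v$, which is~\eqref{eq:cellamplitude}.

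It remains to prove connectivity of $Y$, which I expect to be the main obstacle. The graph of $B$ is connected, but a biregular bipartite graph between two cells need not be connected, and the cell graphs could combine into several components. The approach is to use the equitable structure. Each component of $Y$ meets every cell it touches, because the quotient graph is connected and every vertex of a cell has at least one neighbour ($d\ge1$) in each adjacent cell. Whether a single component exists depends on the chosen bipartite graphs. I would ensure it by choosing every block graph connected: for $d_{uv}\ge2$ or $d_{vu}\ge2$, a circulant block graph with consecutive neighbourhoods is connected. In the degenerate case $d_{uv}=d_{vu}=1$, where $s_u=s_v$ and the block graph is a perfect matching, connectivity instead comes from other edges, or fails. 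This case needs care. The fallback is to argue that connectivity is not essential for~\eqref{eq:cellamplitude}, which follows from the intertwining alone, and to fix the choice of matchings along a spanning tree so that the cells are linked consistently.
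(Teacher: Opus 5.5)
Your construction of the biregular blocks, the computation $H_YQ=QrB$, the step to $U_Y(t)Q=QU_B(rt)$, the singleton-column formula~\eqref{eq:cellamplitude}, and the orientation claim all match the paper's proof. The gap is connectivity of $Y$. It is part of the statement, and you leave it unresolved. Your fallback of treating it as inessential would simply drop a claim of the lemma.

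Your proposed repair, choosing every block graph connected, cannot work in general. The degrees are forced by~\eqref{eq:cellconditions}. If $d_{uv}=1$, every vertex of $V_u$ has exactly one neighbour in $V_v$, so the block is a disjoint union of $s_v^2$ stars. Such a block is disconnected whenever $s_v\ge2$, however large $d_{vu}$ is. Two examples:
\begin{itemize}
\item With $r=1$, $w_{uv}=1$ and $s_u=s_v=3$, the block is a perfect matching on $9+9$ vertices.
\item With $r=2$, $w_{uv}=1$ and $(s_u,s_v)=(6,3)$, one gets $d_{uv}=1$ and $d_{vu}=4$, so the block is nine disjoint copies of $K_{1,4}$.
\end{itemize}
So your claim that a block with some degree at least two can be chosen connected is false. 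The perfect-matching case you flagged is only one instance of an unavoidable phenomenon.

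The missing idea is the hypothesis you never used: some cell $V_p$ is a singleton. You already have the key observation. Every vertex has at least one neighbour in each adjacent cell, and the quotient graph of $B$ is connected, so every component of $Y$ meets every cell. Since $V_p=\{p\}$, every component contains $p$, and hence $Y$ is connected.

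The paper phrases the same fact as a propagation from $p$. A block incident with $V_p$ is complete, because a vertex of a neighbouring cell $V_v$ has $d_{vp}\le|V_p|=1$ neighbours in $V_p$, hence exactly one. So $p$ reaches all of each neighbouring cell. Once a whole cell is reached, the positive degrees reach every vertex of each adjacent cell. Connectivity of the graph of $B$ then finishes the argument. With this, no special choice of block graphs is needed; any biregular blocks with the prescribed degrees work.
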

\begin{proof}
For each edge $uv$, construct a simple biregular bipartite graph between cells of sizes $m_u=s_u^2$ and $m_v=s_v^2$, with respective degrees $d_{uv},d_{vu}$. These degrees are feasible: $m_ud_{uv}=m_vd_{vu}$ and the last inequality in~\eqref{eq:cellconditions} gives $d_{uv}\leq m_v$ and $d_{vu}\leq m_u$. Explicitly, join row $0\leq i<m_u$ to columns
\[
 id_{uv}+j\pmod{m_v},\qquad 0\leq j<d_{uv}.
\]
Each row has distinct neighbors. The concatenated column sequence consists of $d_{vu}$ full residue lists, so all column degrees are $d_{vu}$. Give every edge in this block gain $B_{u,v}/w_{uv}$, with conjugate gains in the opposite block.

The columns of $Q$ are the normalized characteristic vectors of the cells, so $Q^*Q=I$. The row sums in each block give
\[
 (H_YQ)_{x,v}=\frac{B_{u,v}}{w_{uv}}\frac{d_{uv}}{s_v}
 =\frac{rB_{u,v}}{s_u}=(QrB)_{x,v}\qquad(x\in V_u).
\]
The intertwining identity gives $U_Y(t)Q=QU_B(rt)$. Since $Q\e_p=\e_p$ at a singleton cell, its $p$th column gives~\eqref{eq:cellamplitude}. A nonsingleton target has amplitude at most $1/s_v<1$, while a singleton target has modulus one exactly when the corresponding quotient target does. At $v=p$, the same formula shows that return times at the singleton source are exactly those for $B$, divided by $r$, and the return phases are unchanged.

A block incident with a singleton must be complete: its degree on the other side is a positive integer at most one. Thus all vertices in a neighboring cell are reachable from the singleton. Once a cell is reached in its entirety, positive degrees make every vertex in each neighboring cell reachable. Connectivity of the underlying graph of $B$ proves connectivity of $Y$.
\end{proof}

Equation~\eqref{eq:cellamplitude} gives the complete PST class of a singleton source $p$: its targets are precisely the singleton cells indexed by vertices in $\T_B(p)$. Transfer times between singleton vertices are the corresponding times for $B$, divided by $r$. The same rescaling applies to the minimum vertex period at $p$, whenever it exists.

The next theorem makes the cell-size conditions automatic. A Cartesian power makes the selected diagonal vertices independent and raises their transition entries to that power. In the oriented case, odd powers preserve the signs of all transfers and returns.

\begin{theorem}\label{thm:selection}
Let $H$ be the Hermitian adjacency matrix of a connected integer-weighted mixed graph on $n\geq2$ vertices. Let $S$ be a nonempty set of vertices and let $\ell\geq2$ be an integer. There exist an integer $r\geq2$ and a connected graph $Y$, with distinguished vertices $\widehat a$ for $a\in S$, such that
\begin{equation}\label{eq:terminal}
 U_Y(t)_{\widehat b,\widehat a}=U_H(rt)_{b,a}^{\,\ell}
 \qquad(a,b\in S,\ t\in\mathbb R),
\end{equation}
and
\begin{equation}\label{eq:leakage}
 |U_Y(t)_{x,\widehat a}|\leq\frac1r
 \qquad(a\in S,\ x\notin\{\widehat b:b\in S\},\ t\in\mathbb R).
\end{equation}
If the input graph is oriented, then so is $Y$.
\end{theorem}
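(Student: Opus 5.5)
The plan is to take a Cartesian power of $H$ to isolate the selected vertices, and then apply Lemma~\ref{lem:cells} with cell sizes chosen to absorb the edge weights. Concretely, let $B=H^{\square\ell}=\sum_{j=1}^{\ell}I_n^{\otimes(j-1)}\otimes H\otimes I_n^{\otimes(\ell-j)}$ on the vertex set $V^\ell$. This is again the Hermitian adjacency matrix of an integer-weighted mixed graph. It is oriented if $H$ is, and connected because $H$ is. Since the summands commute, $U_B(t)=U_H(t)^{\otimes\ell}$. For the diagonal vertices $\widehat a=(a,\dots,a)$ with $a\in S$ this gives
\[
 U_B(t)_{\widehat b,\widehat a}=U_H(t)_{b,a}^{\,\ell}.
\]
Two vertices of $B$ are adjacent only if they differ in exactly one coordinate. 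Since $\ell\ge2$, distinct diagonal vertices differ in at least two coordinates, so the set $\{\widehat a:a\in S\}$ is independent in $B$.

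Next I would apply Lemma~\ref{lem:cells} to $B$, with $s_{\widehat a}=1$ for $a\in S$ and all other $s_v\ge r$. Then \eqref{eq:cellamplitude} gives \eqref{eq:terminal} directly, with time rescaled by $r$, and gives the leakage bound $|U_Y(t)_{x,\widehat a}|=|U_B(rt)_{v,\widehat a}|/s_v\le 1/r$ for every $x$ in a nonsingleton cell $V_v$. The lemma also keeps $Y$ connected and oriented when $B$ is. So the whole proof reduces to verifying \eqref{eq:cellconditions}.

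The main obstacle is the edges incident with a singleton cell. If $u=\widehat a$ with $s_u=1$ and $v$ is a neighbour joined by weight $w$, then $rw/s_v$ must be a positive integer while $rw\le s_v$. This forces $s_v=rw$ exactly, so every neighbour of a singleton has its cell size pinned down by the weight of that edge. I must check this is consistent when a vertex is adjacent to several diagonal vertices. A neighbour of $\widehat a$ has the form $(a,\dots,b,\dots,a)$, with $b$ in one position and weight $w_{ab}=|H_{a,b}|$.
\begin{itemize}
\item For $\ell\ge3$, such a vertex is adjacent to no other diagonal vertex, so there is no conflict.
\item For $\ell=2$, the vertex $(a,b)$ is adjacent to both $(a,a)$ and $(b,b)$, but both edges carry the same weight $w_{ab}$.
\end{itemize}
So I set $s_v=r\,w_{ab}$ for these neighbours. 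For the remaining vertices, i.e.\ those neither selected nor adjacent to a selected vertex, I set $s_v=r$.

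Finally I choose $r$. Let $W$ be the maximum edge weight of $H$, and take $r=2\cdot W!$. Then $r\ge2$ and $r$ is divisible by every weight. The conditions in \eqref{eq:cellconditions} are then routine to check edge by edge.
\begin{itemize}
\item Between two nonsingleton cells $u,v$, write $s_u=rm_u$ and $s_v=rm_v$ with $m_u,m_v\le W$. The degrees $rw\,m_v/m_u$ and $rw\,m_u/m_v$ are integers because $m_u,m_v$ divide $r$.
\item The inequality $rw\le r^2m_um_v$ holds because $w\le W\le r$.
\item Singleton--neighbour edges were arranged above to satisfy all three conditions.
\end{itemize}
Every nonselected cell has $s_v\ge r$, so \eqref{eq:leakage} follows. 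This completes the plan.
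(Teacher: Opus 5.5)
Your proposal is correct and follows the paper's proof essentially step by step. Both arguments take the Cartesian $\ell$th power, use singleton cells at the selected diagonal tuples, give neighbour cells size $r$ times the edge weight (with the same well-definedness check for $\ell=2$ and $\ell\ge3$), use cells of size $r$ elsewhere, and apply Lemma~\ref{lem:cells}. The only difference is cosmetic: you fix $r=2\cdot W!$, whereas the paper allows any $r\ge\max\{2,w_{\max}\}$ divisible by the lcm of the neighbour weights.
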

\begin{proof}
Form the Cartesian-power Hamiltonian and selected diagonal set
\[
 B=\sum_{j=1}^{\ell} I_n^{\otimes(j-1)}\otimes H\otimes I_n^{\otimes(\ell-j)},
 \qquad \mathcal D=\{(a,\ldots,a):a\in S\}.
\]
The underlying graph of $B$ is connected, and $\mathcal D$ is independent. Every nonzero entry of $B$ comes from exactly one entry of $H$, so it retains its weight and allowed gain. For $v\notin\mathcal D$ adjacent to $p\in\mathcal D$, put $c_v=|B_{v,p}|$. This is well-defined. For $\ell\geq3$, a tuple cannot differ in just one coordinate from two distinct constant tuples. For $\ell=2$, the only possible shared neighbor is $(a,b)$ of $(a,a)$ and $(b,b)$, and both edge weights equal $|H_{a,b}|$. Put $c_v=1$ at the remaining vertices outside $\mathcal D$. Set
\[
 w_{\max}=\max_{a,b}|H_{a,b}|,\qquad
 L_c=\operatorname{lcm}\{c_v:v\notin\mathcal D\},
\]
and choose an integer $r\geq\max\{2,w_{\max}\}$ divisible by $L_c$.

Apply Lemma~\ref{lem:cells} with $s_p=1$ for $p\in\mathcal D$ and $s_v=rc_v$ elsewhere. An edge incident with $\mathcal D$, of weight $w$, has $s_v=rw$, so its block is complete. For an edge with both ends outside $\mathcal D$,
\[
 d_{uv}=rw\frac{c_v}{c_u},\qquad d_{vu}=rw\frac{c_u}{c_v}.
\]
These degrees are integers. Also $w\leq w_{\max}\leq r c_uc_v$, which gives the capacity inequality in~\eqref{eq:cellconditions}. There are no edges within $\mathcal D$, so the lemma applies.

The summands defining $B$ commute, and hence
\[
 U_B(t)=U_H(t)^{\otimes\ell}.
\]
This is the standard factorization for Cartesian-product walks; see~\cite[Eqs.~(9)--(10)]{CDEL}. For $v=(b_1,\ldots,b_\ell)$, $x\in V_v$, and $a\in S$, Lemma~\ref{lem:cells} gives
\[
 U_Y(t)_{x,\widehat a}
   =\frac{\prod_{j=1}^{\ell}U_H(rt)_{b_j,a}}{s_v}.
\]
This proves~\eqref{eq:terminal} at selected diagonal vertices and~\eqref{eq:leakage} elsewhere, since every other cell has $s_v\geq r$. An entry and its $\ell$th power have modulus one at exactly the same times. Consequently, transfer times between distinguished vertices and return times at distinguished vertices are divided by $r$, and a phase $\gamma$ becomes $\gamma^\ell$.

The eigenvalue supports satisfy
\begin{equation}\label{eq:liftsupport}
 \Phi_{\widehat a}(Y)
   =\{r(\theta_1+\cdots+\theta_\ell):
          \theta_1,\ldots,\theta_\ell\in\Phi_a(H)\}.
\end{equation}
To prove this formula, write $p_\theta=(E_\theta)_{a,a}>0$ for $\theta\in\Phi_a(H)$. The diagonal case of~\eqref{eq:terminal} is
\[
 U_Y(t)_{\widehat a,\widehat a}
   =\sum_{\theta_1,\ldots,\theta_\ell\in\Phi_a(H)}
      p_{\theta_1}\cdots p_{\theta_\ell}
      e^{-\ii rt(\theta_1+\cdots+\theta_\ell)}.
\]
All coefficients are positive, so equal frequencies add without cancellation. This proves~\eqref{eq:liftsupport}. Edge gains are preserved in the cell construction, proving the orientation assertion. For an oriented input, phases are $\pm1$, and an odd power preserves them.

The construction works for every integer $r\geq\max\{2,w_{\max}\}$ divisible by $L_c$, and its order is
\begin{equation}\label{eq:order}
 |V(Y)|=|S|+r^2\sum_{v\notin\mathcal D}c_v^2.
\end{equation}
\end{proof}

Equation~\eqref{eq:terminal} and bound~\eqref{eq:leakage} determine the complete target set:
\[
 \T_Y(\widehat a)=\{\widehat b:b\in S\cap\T_H(a)\}.
\]
All transfer times between distinguished vertices and all return times at distinguished vertices are exactly those for $H$, divided by $r$, and a phase $\gamma$ becomes $\gamma^\ell$. In particular, a minimum vertex period $T_a$ becomes $T_a/r$. For an oriented input, odd $\ell$ preserves every transfer and return phase. The additional spectral-support and graph-order formulas are~\eqref{eq:liftsupport} and~\eqref{eq:order}.

For a rational skew-symmetric input $A$, first clear its denominators by a positive integer $D$ and apply the theorem to $H=\ii DA$. The total time rescaling is $1/(Dr)$. If all input weights are one, then $c_v=1$ and~\eqref{eq:order} becomes $|S|+r^2(n^\ell-|S|)$. We use $\ell=2$ when no negative return phase must be retained, and $\ell=3$ when it must.

\section{Proof of the classification theorem}\label{sec:unbounded}
The arithmetic restriction provides the necessary field condition. We now prove sufficiency by constructing the prescribed transfers in any imaginary quadratic field. We first build a rational matrix and a rational unit vector whose orbit is orthonormal at the prescribed times. A rational change of coordinates then turns these orbit vectors into vertex states. Theorem~\ref{thm:selection} converts this intermediate model into an unweighted graph with exactly the required targets.

\subsection{A rational cosine polynomial}
For a real skew-symmetric matrix $A$ and a unit vector $\mathbf u$, the inner product of two orbit vectors satisfies
\[
 \langle e^{sA}\mathbf u,e^{tA}\mathbf u\rangle=\langle \mathbf u,e^{(t-s)A}\mathbf u\rangle.
\]
Thus a return amplitude that vanishes at the prescribed time differences will make those orbit vectors orthogonal. The following polynomial supplies these zeros. Its nonnegative coefficients will become spectral weights.

\begin{lemma}\label{lem:positive}
Let $\Lambda=\{x_0,\ldots,x_{k-1}\}\subset\mathbb R/\mathbb Z$ have $k$ elements, with $x_0=0$, and suppose $z_j=e^{2\pi\ii x_j}$ lies in $\mathbb Q(\sqrt{-\Delta})$ for every $j$, where $\Delta$ is a positive integer. There exists a cosine polynomial
\begin{equation}\label{eq:positivecos}
 F(t)=\sum_{m=0}^{M}b_m\cos(mt),\qquad
 b_m\in\mathbb Q_{\geq0},\quad b_0,b_1>0,\quad\sum_m b_m=1,
\end{equation}
satisfying $F(2\pi(x_j-x_\ell))=0$ whenever $j\ne\ell$.
\end{lemma}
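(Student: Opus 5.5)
The plan is to build $F$ as a product of simple nonnegative cosine polynomials, one for each nonzero difference, and then normalize. The guiding observation is that if $F$ and $G$ are cosine polynomials with nonnegative rational coefficients, so is $FG$, because $\cos(mt)\cos(nt)=\tfrac12\cos((m+n)t)+\tfrac12\cos((m-n)t)$. The Lemma only requires zeros at the points $2\pi(x_j-x_\ell)$; extra zeros are harmless. So it suffices to produce one nonnegative rational factor vanishing at each individual difference, and to add one factor that makes $b_0,b_1>0$.

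\emph{Step 1: the cosines are rational.} For $j\ne\ell$ put $\theta=2\pi(x_j-x_\ell)$. Then $e^{\ii\theta}=z_j\overline{z_\ell}=z_j/z_\ell$ lies in $\mathbb Q(\sqrt{-\Delta})$. Its real part $\cos\theta$ is therefore rational, since $\operatorname{Re} w=(w+\overline w)/2$ and this field is stable under complex conjugation. Because the $x_j$ are distinct in $\mathbb R/\mathbb Z$, we also have $\theta\notin2\pi\mathbb Z$. Consequently, for every integer $m$, the number $\cos(m\theta)=T_m(\cos\theta)$ is rational, where $T_m$ is the Chebyshev polynomial.

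\emph{Step 2: one factor per difference.} Choose a positive integer $m$ with $\cos(m\theta)\le0$. Set
\[
 f_\theta(t)=\frac{\cos(mt)-\cos(m\theta)}{1-\cos(m\theta)}.
\]
Its coefficients $-\cos(m\theta)/(1-\cos(m\theta))\ge0$ and $1/(1-\cos(m\theta))>0$ are rational and sum to $1$, and $f_\theta(\theta)=0$.

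The only point needing an argument is the existence of such an $m$, which I expect to be the main (though mild) obstacle. If $\theta/2\pi$ is irrational, the multiples $m\theta/2\pi$ are dense modulo $1$, so some of them land in $[1/4,3/4]$. If $\theta/2\pi=p/N$ in lowest terms with $N\ge2$, the multiples run through all of $\frac1N\mathbb Z/\mathbb Z$. This set contains a point of $[1/4,3/4]$, for instance $\lfloor N/2\rfloor/N$. (If $\cos\theta\le0$ already, simply take $m=1$.)

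\emph{Step 3: assemble.} Define
\[
 F(t)=\frac{2+\cos t}{3}\prod_{\{j,\ell\},\,j\ne\ell}f_{2\pi(x_j-x_\ell)}(t).
\]
Since cosine is even, one factor per unordered pair suffices. $F$ is a cosine polynomial whose coefficients are nonnegative rationals. Their sum is $F(0)=1$, because every factor takes the value $1$ at $t=0$, and evaluating at $t=0$ sums the coefficients. $F$ vanishes at every $2\pi(x_j-x_\ell)$ with $j\ne\ell$.

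For positivity of $b_0$ and $b_1$, let $G$ denote the product of the $f$'s, with coefficients $g_m\ge0$. Its constant coefficient satisfies $g_0>0$, since $g_0$ is the mean of $G$ and the mean of a product of such factors contains the positive product of their constant terms plus nonnegative contributions. Multiplying by $(2+\cos t)/3$ gives $b_0\ge\tfrac23 g_0>0$ and $b_1\ge\tfrac13 g_0>0$. This yields \eqref{eq:positivecos}. When $k=1$ the product is empty and $F(t)=(2+\cos t)/3$ works directly.
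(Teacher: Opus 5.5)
Your construction is the same product kernel the paper uses: $\frac{2+\cos t}{3}$ times one normalized factor $\frac{\cos(mt)-\cos(m\theta)}{1-\cos(m\theta)}$ per unordered pair. There is, however, one step that fails as written.

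You only require $\cos(m\theta)\le0$. In Step~3 you then claim that the constant coefficient $g_0$ of the product is positive because it contains ``the positive product of their constant terms.'' When $\cos(m\theta)=0$, the factor $f_\theta$ has constant coefficient $0$, so that product is $0$ and the claim has no support. In fact it is false. Take $\Delta=1$ and $\Lambda=\{0,1/4\}$. Then $\theta=\pi/2$ and $\cos\theta=0$, your parenthetical rule gives $m=1$, and $G(t)=\cos t$ has $g_0=0$. Here $F$ happens to survive, since $b_0=1/6$ comes from $g_1$, not from $g_0$. But your Step~2 equally permits $m=3$, and that gives $F(t)=\frac{2+\cos t}{3}\cos 3t=\frac16\cos 2t+\frac23\cos 3t+\frac16\cos 4t$, with $b_0=b_1=0$. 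This violates the lemma.

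The repair is to insist on $\cos(m\theta)<0$, as the paper does. It takes $n=\lfloor\pi/(2\beta)\rfloor+1$ with $\beta\in(0,\pi]$, so that $\pi/2<n\beta\le\pi$. Your existence argument already delivers the strict inequality once two adjustments are made:
\begin{itemize}
\item The density argument places some multiple in the open interval $(1/4,3/4)$, and in the rational case $\lfloor N/2\rfloor/N\in[1/3,1/2]$ already lies in that open interval.
\item Drop the shortcut $m=1$ when $\cos\theta=0$, and take $m=2$ there.
\end{itemize}
Then every factor has a positive constant coefficient, and your bounds $b_0\ge\frac23g_0>0$ and $b_1\ge\frac13g_0>0$ are valid.

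Alternatively, you can keep $m=1$ in that case. The only zero-constant factors are then copies of $\cos t$, so $G=\cos^p t\cdot G_+$, where $G_+$ has positive constant term. A parity argument gives $g_0>0$ for $p$ even and $g_1>0$ for $p$ odd, and either still yields $b_0,b_1>0$. The strict inequality is the cleaner fix.
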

\begin{proof}
For each unordered pair $j<\ell$, choose a positive integer $n_{j\ell}$ with
\[
 c_{j\ell}:=\operatorname{Re}\bigl((z_j\overline{z_\ell})^{n_{j\ell}}\bigr)<0.
\]
Such an integer always exists. Indeed, choose $\beta\in(0,\pi]$ with $\cos\beta=\operatorname{Re}(z_j\overline{z_\ell})$. If $\beta>\pi/2$, take $n=1$. Otherwise take $n=\lfloor\pi/(2\beta)\rfloor+1$, so $\pi/2<n\beta\leq\pi$. The real part of an element of $\mathbb Q(\sqrt{-\Delta})$ is rational, hence $c_{j\ell}\in\mathbb Q$.

Use one factor for each required zero, and define
\begin{equation}\label{eq:productkernel}
 F(t)=\frac{2+\cos t}{3}
 \prod_{0\leq j<\ell<k}
 \frac{\cos(n_{j\ell}t)-c_{j\ell}}{1-c_{j\ell}}.
\end{equation}
Every factor has nonnegative rational cosine coefficients and a positive constant coefficient. Products preserve this property, by
\[
 \cos(at)\cos(bt)=\tfrac12\cos((a+b)t)+\tfrac12\cos((a-b)t).
\]
The product has a positive constant coefficient, so multiplication by $(2+\cos t)/3$ gives both $b_0>0$ and $b_1>0$. The normalization gives $F(0)=1$. At each prescribed difference one factor vanishes. One may take $M=1+\sum_{j<\ell}n_{j\ell}$. For $k=1$ the product is empty, and the same formula applies.
\end{proof}
The coefficients of $F$ are nonnegative; the values of $F$ need not be. Each integer $n_{j\ell}$ can be found by enumerating powers until the rational real part becomes negative.

\subsection{Rational orthogonal realization}
We realize each frequency of $F$ by a rotation block. The construction uses Lagrange's four-square theorem.

\begin{lemma}[{\cite[Theorem~369]{HW}}]\label{lem:foursquares}
Every positive integer is a sum of four squares of integers.
\end{lemma}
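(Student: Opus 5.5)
The proof is the classical one; in the paper it is simply cited from \cite[Theorem~369]{HW}, so I only sketch the plan. The first step is a reduction to primes using Euler's four-square identity. This identity expresses
\[
(a_1^2+a_2^2+a_3^2+a_4^2)(b_1^2+b_2^2+b_3^2+b_4^2)
\]
as $c_1^2+c_2^2+c_3^2+c_4^2$ with each $c_i$ an explicit bilinear integer form. One way to obtain it is the multiplicativity of the quaternion norm. It follows that the set of sums of four squares is closed under multiplication. Since $1=1^2+0^2+0^2+0^2$ and $2=1^2+1^2+0^2+0^2$, it remains to treat an odd prime $p$.

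Next, for an odd prime $p$, I obtain a nontrivial multiple of $p$ that is a sum of four squares by pigeonhole. The $(p+1)/2$ residues $x^2$ with $0\le x\le (p-1)/2$ are pairwise distinct modulo $p$. The same holds for the $(p+1)/2$ residues $-1-y^2$ in that range of $y$. Since $(p+1)/2+(p+1)/2=p+1>p$, the two lists share a residue, giving
\[
x^2+y^2+1\equiv0\pmod p.
\]
Hence $mp=x^2+y^2+1^2+0^2$ for some integer $m$. The bound $x^2+y^2+1<1+2(p/2)^2<p^2$ gives $1\le m<p$.

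The main step is the descent. Let $m_0$ be the least positive integer such that $m_0p$ is a sum of four squares, $m_0p=\sum x_i^2$; the previous step gives $m_0<p$. I will show $m_0=1$.

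First suppose $m_0$ is even. Then an even number of the $x_i$ are odd, so the $x_i$ can be paired with equal parities. The four numbers $(x_1\pm x_2)/2$ and $(x_3\pm x_4)/2$ are then integers whose squares sum to $(m_0/2)p$, contradicting minimality.

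Now suppose $m_0$ is odd and $m_0>1$. Choose $y_i\equiv x_i\pmod{m_0}$ with $|y_i|<m_0/2$. Then $\sum y_i^2=m_0m_1$ with $m_1<m_0$. If $m_1=0$, every $x_i$ is divisible by $m_0$, so $m_0^2\mid m_0p$, which is impossible because $1<m_0<p$; hence $m_1\ge1$. Multiplying $\sum x_i^2$ by $\sum y_i^2$ via Euler's identity gives $m_0^2m_1p=\sum z_i^2$. Each $z_i$ is $\equiv0\pmod{m_0}$, since modulo $m_0$ the forms reduce to $\sum x_i^2\equiv0$ or to antisymmetric combinations that cancel. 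Dividing each $z_i$ by $m_0$ expresses $m_1p$ as a sum of four squares, contradicting minimality.

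I expect the main obstacle to be this divisibility check for the $z_i$ in the odd case. It requires writing the signs in Euler's identity carefully, so that after the substitution $y_i\equiv x_i$ every component other than the first becomes $x_ix_j-x_jx_i\equiv0$. I would fix the quaternion-product sign convention first and derive the identity from it, so that the check becomes mechanical.
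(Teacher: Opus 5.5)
Your proposal is correct and is the classical Euler--Lagrange argument: Euler's four-square identity, the pigeonhole congruence $x^2+y^2+1\equiv0\pmod p$, and descent on the least multiplier $m_0$. This is the standard proof behind the cited Hardy--Wright theorem, and the paper gives no proof of its own beyond that citation; for the divisibility check you flagged, using the quaternion product $x\bar y$ (conjugating one factor) makes the first component $\sum_i x_iy_i$ and the other three antisymmetric, exactly as you need.
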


The four-square representation of $\Delta$ gives an integer rotation block with the required angular speed.

\begin{lemma}\label{lem:rotation}
For every positive integer $\Delta$ there is an integer skew-symmetric matrix $\Omega_\Delta$ of order eight such that $\Omega_\Delta^2=-\Delta I$. For $\Delta=1$, a matrix of order two suffices.
\end{lemma}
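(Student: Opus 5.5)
The plan is to use the quaternion left-multiplication matrices. By Lemma~\ref{lem:foursquares}, write $\Delta=a^2+b^2+c^2+d^2$ with integers $a,b,c,d$. Let $q=a\mathbf i+b\mathbf j+c\mathbf k$ plus a fourth component; the cleanest choice is a pure quaternion. A pure quaternion $q=\alpha\mathbf i+\beta\mathbf j+\gamma\mathbf k$ satisfies $q^2=-(\alpha^2+\beta^2+\gamma^2)$, and left multiplication by $q$ on $\mathbb R^4\cong\mathbb H$ is an integer skew-symmetric $4\times4$ matrix $L_q$. That matrix is skew because conjugation negates a pure quaternion and left multiplication by $\bar q$ is the transpose of left multiplication by $q$. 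Moreover $L_q^2=L_{q^2}=-(\alpha^2+\beta^2+\gamma^2)I_4$. This handles sums of three squares only, which is why order eight is needed for four squares.

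To use all four squares, I would take
\[
 \Omega_\Delta=\begin{pmatrix} aJ_4 & X\\ -X^{\mathsf T} & -aJ_4'\end{pmatrix},
\]
or more simply the block form below. Let $P=L_{b\mathbf i+c\mathbf j+d\mathbf k}$, so that $P^{\mathsf T}=-P$ and $P^2=-(b^2+c^2+d^2)I_4$. Put
\[
 \Omega_\Delta=\begin{pmatrix} P & aI_4\\ -aI_4 & -P\end{pmatrix}.
\]
This matrix is skew-symmetric: the diagonal blocks are skew, and the off-diagonal blocks $aI$ and $-aI$ are negative transposes of each other. Squaring gives diagonal blocks $P^2-a^2I=-\Delta I_4$ and off-diagonal blocks $aP-aP=0$ and $-aP+aP=0$. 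Hence $\Omega_\Delta^2=-\Delta I_8$. The only thing to verify is the anticommutation bookkeeping in this block square. I expect that to be the one step needing care, and it is routine: the blocks $aI_4$ commute with $P$, and the sign $-P$ in the lower block makes the cross terms cancel.

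For $\Delta=1$, take $\Omega_1=\begin{pmatrix}0&1\\-1&0\end{pmatrix}$, whose square is $-I_2$.
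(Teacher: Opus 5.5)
Your final construction $\Omega_\Delta=\begin{pmatrix}P&aI_4\\-aI_4&-P\end{pmatrix}$ is correct and uses the same ingredients as the paper: four squares, a quaternion multiplication matrix, and doubling to order eight. You should delete the aborted first attempts involving $q$ and $J_4$. The paper instead places the full quaternion matrix $M_\Delta=aI_4+P$ off the diagonal, $\Omega_\Delta=\begin{pmatrix}O_4&-M_\Delta\\M_\Delta^{\mathsf T}&O_4\end{pmatrix}$. Its check $M_\Delta M_\Delta^{\mathsf T}=(aI_4+P)(aI_4-P)=a^2I_4-P^2=\Delta I_4$ is the same identity that drives your block square.
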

\begin{proof}
By Lemma~\ref{lem:foursquares}, write $\Delta=a_0^2+a_1^2+a_2^2+a_3^2$, and put
\[
 M_\Delta=\begin{pmatrix}
 a_0&-a_1&-a_2&-a_3\\
 a_1&a_0&-a_3&a_2\\
 a_2&a_3&a_0&-a_1\\
 a_3&-a_2&a_1&a_0
 \end{pmatrix},\qquad
 \Omega_\Delta=\begin{pmatrix}O_4&-M_\Delta\\M_\Delta^{\mathsf T}&O_4\end{pmatrix}.
\]
Direct multiplication gives $M_\Delta M_\Delta^{\mathsf T}=M_\Delta^{\mathsf T}M_\Delta=\Delta I_4$, proving the assertion. For $\Delta=1$ use $\Omega_1=\left(\begin{smallmatrix}0&-1\\1&0\end{smallmatrix}\right)$.
\end{proof}

\begin{lemma}\label{lem:frame}
Let $\Delta$ be a positive integer and let $\alpha_0=0,\alpha_1,\ldots,\alpha_{k-1}\in\mathbb R$ satisfy $z_j=e^{\ii \alpha_j}\in\mathbb Q(\sqrt{-\Delta})$. Suppose
\[
 F(t)=\sum_{m=0}^{M}b_m\cos(mt),\qquad
 b_m\in\mathbb Q_{\geq0},\quad b_1>0,\quad\sum_m b_m=1,
\]
and $F(\alpha_j-\alpha_h)=0$ for $j\ne h$. There are an integer skew-symmetric matrix $A_0$ and a rational unit vector $\mathbf u$ such that
\[
 \langle\mathbf u,e^{tA_0}\mathbf u\rangle=F(\sqrt\Delta t),
\]
and the vectors $\mathbf u_j=e^{(\alpha_j/\sqrt\Delta)A_0}\mathbf u$ form a rational orthonormal frame. The eigenvalue support of $\mathbf u$ for $\ii A_0$ is
\begin{equation}\label{eq:framesupport}
 \{\pm m\sqrt\Delta:0\leq m\leq M,\ b_m>0\}.
\end{equation}
If $b_m=0$ for every even $m$, then $\mathbf u$ has minimum period $\pi/\sqrt\Delta$ and return phase $-1$. Otherwise, $\mathbf u$ has minimum period $2\pi/\sqrt\Delta$ and return phase $+1$.
\end{lemma}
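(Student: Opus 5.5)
We build $A_0$ as a direct sum of rotation blocks, one for each frequency $m$ with $b_m>0$, and take $\mathbf u$ to be a weighted concatenation of unit vectors. For $m=0$ we use a one-dimensional zero block. For $m\geq1$ we use the block $m\Omega_\Delta$ from Lemma~\ref{lem:rotation}. Since $\Omega_\Delta^2=-\Delta I$ and $\Omega_\Delta$ is skew-symmetric,
\[
 e^{tm\Omega_\Delta}=\cos(m\sqrt\Delta t)I+\frac{\sin(m\sqrt\Delta t)}{\sqrt\Delta}\Omega_\Delta .
\]
For any unit vector $\mathbf v$ we have $\langle\mathbf v,\Omega_\Delta\mathbf v\rangle=0$, so $\langle\mathbf v,e^{tm\Omega_\Delta}\mathbf v\rangle=\cos(m\sqrt\Delta t)$. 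Putting the component $\sqrt{b_m}\,\mathbf v_m$ in block $m$ therefore gives $\langle\mathbf u,e^{tA_0}\mathbf u\rangle=\sum_m b_m\cos(m\sqrt\Delta t)=F(\sqrt\Delta t)$.

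The difficulty is that $\sqrt{b_m}$ is generally irrational, while we need $\mathbf u$ rational. The plan is to write each $b_m$ as a sum of four rational squares using Lemma~\ref{lem:foursquares}: write $b_m=p/q=pq/q^2$ and express $pq$ as a sum of four integer squares. We then let each nonzero block carry several copies of $\Omega_\Delta$, say four copies or a tensor $I_4\otimes m\Omega_\Delta$. In each copy we place a rational vector $c_i\mathbf e$, where $\mathbf e$ is a standard unit vector and $\sum c_i^2=b_m$. For $m=0$ we do the same with four zero coordinates. Then $\mathbf u$ is rational with $\|\mathbf u\|^2=\sum b_m=1$, and the return-amplitude identity still holds, since every copy contributes $c_i^2\cos(m\sqrt\Delta t)$.

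Next we check rationality of the orbit vectors $\mathbf u_j$. With $t=\alpha_j/\sqrt\Delta$, block $m$ acts by $\cos(m\alpha_j)I+\sin(m\alpha_j)\Omega_\Delta/\sqrt\Delta$. Now $z_j^m=\cos(m\alpha_j)+\ii\sin(m\alpha_j)\in\mathbb Q(\sqrt{-\Delta})$, so $\cos(m\alpha_j)\in\mathbb Q$ and $\sin(m\alpha_j)\in\sqrt\Delta\,\mathbb Q$. Hence $\sin(m\alpha_j)/\sqrt\Delta\in\mathbb Q$, and $\mathbf u_j$ is rational. For orthonormality, skew-symmetry gives $\langle\mathbf u_j,\mathbf u_h\rangle=F(\alpha_h-\alpha_j)$. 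This equals $1$ when $j=h$ and $0$ otherwise, because $F$ is even and vanishes at the prescribed differences.

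For the eigenvalue support, the eigenvalues of $\ii m\Omega_\Delta$ are $\pm m\sqrt\Delta$. The vector $\mathbf e$ is real and nonzero, so its projections onto the two eigenspaces $\pm m\sqrt\Delta$ are complex conjugates of each other and both nonzero. Together with $0$ from the $m=0$ part, this gives~\eqref{eq:framesupport}. For the period, the support gaps generate $\sqrt\Delta\cdot\gcd(\{m+m'\},\{m-m'\})$, taken over $m$ with $b_m>0$. Since $b_1>0$, the frequencies $\pm1$ lie in the support and give the gap $2\sqrt\Delta$. If some $b_m>0$ with $m$ even, then $m-1$ is odd, the generated group is $\sqrt\Delta\,\mathbb Z$, and $T=2\pi/\sqrt\Delta$ with $e^{TA_0}=I$ on $\mathbf u$, so the return phase is $+1$. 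If all such $m$ are odd, every gap is even and the group is $2\sqrt\Delta\,\mathbb Z$. Then $T=\pi/\sqrt\Delta$, and each block acts by $\cos(m\pi)=-1$, so the return phase is $-1$.

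The main obstacle is producing a rational $\mathbf u$ despite irrational square roots of the weights. The four-square replication of blocks is the key step to get right.
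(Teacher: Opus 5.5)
Your proposal is correct and follows the paper's proof essentially step for step: the direct sum of a zero block with the blocks $m\Omega$, the four-square decomposition of each $b_m$, rationality of each $\mathbf u_j$ from $z_j^m\in\mathbb Q(\sqrt{-\Delta})$, orthonormality from $F(\alpha_h-\alpha_j)=\delta_{jh}$, and the parity argument for the period and return phase. The one difference is that your four-fold replication of each block is unnecessary, because $\langle\mathbf v,\Omega\mathbf v\rangle=0$ holds for every real $\mathbf v$; the paper instead places a rational $\mathbf v_m\in\mathbb Q^4$ with $\|\mathbf v_m\|^2=b_m$, padded by zeros, in a single block (using $\Omega_1\oplus\Omega_1$ when $\Delta=1$), which keeps the matrix order at most $4+8M$.
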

\begin{proof}
Every nonnegative rational number is the squared norm of a rational vector in $\mathbb Q^4$. Indeed, if $b=p/q>0$ with $p,q\in\mathbb Z_{>0}$, Lemma~\ref{lem:foursquares} gives $pq=a_1^2+\cdots+a_4^2$, so $b=\sum_j(a_j/q)^2$. Choose $\mathbf v_m\in\mathbb Q^4$ with $\|\mathbf v_m\|^2=b_m$.

Let $\Omega$ have order $d_\Delta$, where $d_\Delta=8$ in general and $d_1=4$, and satisfy $\Omega^{\mathsf T}=-\Omega$ and $\Omega^2=-\Delta I$. Use Lemma~\ref{lem:rotation} for the order-eight choice and $\Omega_1\oplus\Omega_1$ for $\Delta=1$. For $m\geq1$ with $b_m>0$, extend $\mathbf v_m$ by zeros to $\widetilde{\mathbf v}_m\in\mathbb Q^{d_\Delta}$, and set
\[
 A_0=O_4\oplus\bigoplus_{\substack{1\leq m\leq M\\b_m>0}}m\Omega,
 \qquad
 \mathbf u=\mathbf v_0\oplus\bigoplus_{\substack{1\leq m\leq M\\b_m>0}}\widetilde{\mathbf v}_m.
\]
Then $\|\mathbf u\|=1$. The identity
\begin{equation}\label{eq:rotationexp}
 e^{tm\Omega}=\cos(m\sqrt\Delta t)I+
 \frac{\sin(m\sqrt\Delta t)}{\sqrt\Delta}\Omega
\end{equation}
gives the return amplitude and support. The spectral weight is $b_0$ at zero, when present, and $b_m/2$ at each of $\pm m\sqrt\Delta$ for $m>0$. Write $z_j^m=p_{jm}+q_{jm}\sqrt{-\Delta}$ with $p_{jm},q_{jm}\in\mathbb Q$. At time $\alpha_j/\sqrt\Delta$, the block in~\eqref{eq:rotationexp} is $p_{jm}I+q_{jm}\Omega$. Thus every $\mathbf u_j$ is rational, and
\[
 \langle\mathbf u_j,\mathbf u_h\rangle=F(\alpha_h-\alpha_j)=\delta_{jh},
\]
where $\delta_{jh}$ is the Kronecker delta.

The positive weight at frequency one puts both $\sqrt\Delta$ and $-\sqrt\Delta$ in the support. Any return therefore satisfies $\sqrt\Delta t\in\pi\mathbb Z$. At $t=\pi/\sqrt\Delta$, all supported phases agree exactly when only odd frequencies occur, in which case they are all $-1$. Otherwise the first return is at $2\pi/\sqrt\Delta$, with phase $+1$. The block orders give a matrix order at most $4+8M$, or $4+4M$ when $\Delta=1$.
\end{proof}

The vectors $\mathbf u_j$ are rational and orthonormal, but they need not be coordinate vectors. A rational orthogonal change of coordinates resolves this last issue.

\begin{lemma}\label{lem:completion}
Every rational orthonormal frame $\mathbf u_0,\ldots,\mathbf u_{k-1}\in\mathbb Q^N$ can be sent to $\e_0,\ldots,\e_{k-1}$ by a rational orthogonal matrix $R$.
\end{lemma}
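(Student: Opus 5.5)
The plan is to build $R$ out of rational reflections, one frame vector at a time. For a rational vector $\mathbf w\ne0$, the Householder reflection
\[
 P_{\mathbf w}=I-\frac{2\mathbf w\mathbf w^{\mathsf T}}{\mathbf w^{\mathsf T}\mathbf w}
\]
is rational, symmetric and orthogonal, and it interchanges two vectors of equal norm: if $\|\mathbf x\|=\|\mathbf y\|$ and $\mathbf x\ne\mathbf y$, then $P_{\mathbf x-\mathbf y}\mathbf x=\mathbf y$. So a unit rational vector $\mathbf u_0$ can be sent to $\e_0$ by the rational orthogonal matrix $P_{\mathbf u_0-\e_0}$, or by $I$ if $\mathbf u_0=\e_0$. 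Note that the norms agree exactly, since $\|\mathbf u_0\|=1=\|\e_0\|$. We do not need any square roots, because the reflection uses only $\mathbf w^{\mathsf T}\mathbf w$, which is rational.

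The argument is an induction on $j$. Suppose a rational orthogonal $R_j$ sends $\mathbf u_i$ to $\e_i$ for every $i<j$. Orthogonal matrices preserve inner products, so $\mathbf x=R_j\mathbf u_j$ is a rational unit vector orthogonal to $\e_0,\ldots,\e_{j-1}$. Its first $j$ coordinates therefore vanish. If $\mathbf x=\e_j$ we keep $R_{j+1}=R_j$. Otherwise we set $\mathbf w=\mathbf x-\e_j$ and $R_{j+1}=P_{\mathbf w}R_j$. The vector $\mathbf w$ also has vanishing first $j$ coordinates, so it is orthogonal to $\e_i$ for $i<j$. Hence $P_{\mathbf w}\e_i=\e_i$ for those $i$, and the earlier alignments survive. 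Since $P_{\mathbf w}\mathbf x=\e_j$, the step is complete. After $k$ steps we take $R=R_k$, a product of at most $k$ rational reflections, which is rational and orthogonal.

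The only point that needs care is that the Gram--Schmidt-style completion never requires normalizing a vector, which would introduce irrational square roots. The reflection trick avoids this entirely, since it only uses the exact equality of the norms of $\mathbf x$ and $\e_j$. I expect no step to be a genuine obstacle.
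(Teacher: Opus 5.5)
Your proposal is correct and follows essentially the same route as the paper: successive rational Householder reflections with $\mathbf w=R_j\mathbf u_j-\e_j$. The vanishing of the first $j$ coordinates of $\mathbf w$ guarantees that the earlier images $\e_0,\ldots,\e_{j-1}$ stay fixed. You also verify explicitly the reflection identity $P_{\mathbf x-\mathbf y}\mathbf x=\mathbf y$ for vectors of equal norm, which the paper takes as standard.
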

\begin{proof}
Proceed one vector at a time. After fixing the earlier images, let $\mathbf y$ be the current image of $\mathbf u_j$. If $\mathbf y=\e_j$, do nothing. Otherwise the Householder reflection~\cite{Householder}
\[
 I-\frac{2\mathbf v\mathbf v^{\mathsf T}}{\mathbf v^{\mathsf T}\mathbf v},\qquad \mathbf v=\mathbf y-\e_j,
\]
sends $\mathbf y$ to $\e_j$. Its entries are rational because $\mathbf v$ is rational. Since $\mathbf y$ is orthogonal to the earlier coordinate vectors, the first $j$ entries of $\mathbf v$ vanish, so those vectors remain fixed. The product of these reflections proves the assertion.
\end{proof}

\subsection{Completion of the proof}\label{sec:phaseconstruction}
\begin{proof}[Proof of Theorem~\ref{thm:schedules}]
Write $\Lambda=\{x_0,\ldots,x_{k-1}\}$ with $x_0=0$, and put $z_j=e^{2\pi\ii x_j}$.
We first prove \textup{(iii)} $\Longleftrightarrow$ \textup{(iv)}. If the unit-circle points lie in one imaginary quadratic field, so do their pairwise ratios. Taking real parts proves \textup{(iv)}. Conversely, write $z_j=a_j+\ii b_j$. Condition \textup{(iv)} with $x_0=0$ gives $a_j\in\mathbb Q$ and $b_j^2=1-a_j^2\in\mathbb Q$. If every $b_j$ vanishes, all points are $\pm1$ and lie in $\mathbb Q(\ii)$. Otherwise choose $b_h\ne0$. Since
\[
 a_ja_h+b_jb_h=\cos(2\pi(x_j-x_h))\in\mathbb Q,
\]
we have $b_j/b_h\in\mathbb Q$ for every $j$. All $z_j$ then lie in the imaginary quadratic field $\mathbb Q(\ii b_h)$.

Next, \textup{(ii)} $\Longrightarrow$ \textup{(i)} is immediate because an oriented graph is a mixed graph. If \textup{(i)} holds, its Hermitian adjacency matrix has entries in $\mathbb Q(\ii)$. Lemma~\ref{lem:field} places every $e^{-2\pi\ii x_j}$ in one imaginary quadratic field. This field is closed under complex conjugation, so \textup{(iii)} follows.

For an oriented realization with return phase $-1$, Lemma~\ref{lem:field} also places the half-angle points $e^{\pi\ii x_j}$ in a common imaginary quadratic field. Taking real parts of their pairwise ratios proves~\eqref{eq:negativephasecondition}. Conversely, the argument for \textup{(iii)} $\Longleftrightarrow$ \textup{(iv)}, applied to these half-angle points, shows that~\eqref{eq:negativephasecondition} puts them in one imaginary quadratic field.

We now prove sufficiency for both return phases in one construction. Fix $\varepsilon\in\{1,-1\}$, and put $\nu=1$ if $\varepsilon=1$ and $\nu=2$ if $\varepsilon=-1$. Suppose that
\[
 e^{2\pi\ii x_j/\nu}\in\mathbb Q(\sqrt{-\Delta})
 \qquad(0\leq j<k),
\]
where $\Delta$ is positive and squarefree. Their $\nu$th powers are $z_j$, so Lemma~\ref{lem:positive} gives a polynomial $G$ for $\Lambda$. Set
\[
 F_\nu(t)=
 \begin{cases}
 G(t),&\nu=1,\\
 \cos t\,G(2t),&\nu=2,
 \end{cases}
 \qquad \alpha_j=\frac{2\pi x_j}{\nu}.
\]
In both cases $F_\nu(0)=1$ and $F_\nu(\alpha_j-\alpha_h)=0$ for $j\ne h$. For $\nu=1$, the coefficients at frequencies zero and one are positive. For $\nu=2$, the cosine product formula gives nonnegative rational coefficients, only odd frequencies, and a positive coefficient at frequency one. Lemma~\ref{lem:frame} therefore gives a rational orthonormal orbit frame whose initial vector has minimum period
\begin{equation}\label{eq:modelperiod}
 P=\frac{2\pi}{\nu\sqrt\Delta}
\end{equation}
and return phase $\varepsilon$. Lemma~\ref{lem:completion} gives a rational skew-symmetric matrix $A_{\mathrm{rat}}$ satisfying
\[
 e^{Px_jA_{\mathrm{rat}}}\e_0=\e_j\qquad(0\leq j<k).
\]
Restrict $A_{\mathrm{rat}}$ to the connected component containing $0$. All $j<k$ belong to this component, since they are reached from $0$. The source is nonstationary, so the component has at least two vertices. Restriction preserves its source orbit and minimum vertex period $P$.

Choose a positive integer $D$ clearing the denominators of the restricted matrix. Apply Theorem~\ref{thm:selection} to $\ii D A_{\mathrm{rat}}$, with selected set $\{0,\ldots,k-1\}$ and $\ell=\nu+1$. Equations~\eqref{eq:terminal} and~\eqref{eq:leakage} make the selected vertices the complete PST class of the distinguished source. Their common minimum vertex period is $T=P/(Dr)$, their normalized transfer times are $x_j$, and each displayed source-to-target transfer has phase $+1$. The return phase is $\varepsilon^{\nu+1}=\varepsilon$. For $\nu=1$ this proves \textup{(iii)} $\Longrightarrow$ \textup{(ii)}; for $\nu=2$ it proves sufficiency of~\eqref{eq:negativephasecondition}.

Finally, prescribe signs $\sigma_x\in\{1,-1\}$ with $\sigma_0=1$, and index the class of the realizing graph $X$ as $\{v_x:x\in\Lambda\}$. Conjugate its skew adjacency matrix by the diagonal sign matrix with entry $\sigma_x$ at $v_x$ and entry one elsewhere. The resulting graph remains unweighted and oriented. This changes the source-to-target phase at $Tx$ to $\sigma_x$ without changing any transfer time or return phase. Write $U_X$ for its transition matrix. Lemma~\ref{lem:labels}, with representatives $x,y\in[0,1)$, gives all transfers and their phases:
\begin{equation}\label{eq:allphases}
 U_X\bigl(T(y-x+n)\bigr)\e_{v_x}
   =\varepsilon^n\sigma_y\sigma_x\e_{v_y}
 \qquad(x,y\in\Lambda,\ n\in\mathbb Z).
\end{equation}
\end{proof}

Every step is finite and uses exact arithmetic when the unit-circle points are given in a common imaginary quadratic field. One finds the powers in~\eqref{eq:productkernel}, expands the polynomial, chooses four-square decompositions, and forms the rational reflections. Clearing denominators and applying Theorem~\ref{thm:selection} completes the construction. The resulting graph can be large because the denominators can be large. There is no need to classify extra targets of the intermediate matrix: the selection theorem removes them.

\section{Consequences of the classification}\label{sec:consequences}
We first describe rational transfer times and the partitions they define. We then relate coordinate permutations to switching automorphisms, construct classes with irrational normalized transfer times, and treat simultaneous realization with prescribed relative minimum vertex periods.

\subsection{Rational normalized transfer times}
A rational normalized transfer time corresponds to a root of unity in the field supplied by Lemma~\ref{lem:field}. We begin with the resulting bound, then determine all possible sets and partitions.

For an imaginary quadratic field $J$, write $\mu(J)$ for its group of roots of unity and put $w=|\mu(J)|$. Then
\[
 w=\begin{cases}
 4,&J=\mathbb Q(\ii),\\
 6,&J=\mathbb Q(\sqrt{-3}),\\
 2,&\text{otherwise}.
 \end{cases}
\]
Indeed, a primitive $m$th root of unity generates a field of degree $\varphi(m)$ over $\mathbb Q$, where $\varphi$ is Euler's totient function~\cite[Lemma~5.9 and Theorem~5.10]{Milne}. In a quadratic field, $\varphi(m)\leq2$, so $m\in\{1,2,3,4,6\}$. Orders four and three generate $\mathbb Q(\ii)$ and $\mathbb Q(\sqrt{-3})$, respectively.

\begin{theorem}\label{thm:rational}
Let $H$ be Hermitian over an imaginary quadratic field, and let $u$ have minimum vertex period $T$. Let $J$ be the field in~\eqref{eq:targetfield} and put $w=|\mu(J)|$. Every target reached at a rational multiple of $T$ has normalized transfer time
\[
 x_v\in\tfrac1w\mathbb Z/\mathbb Z.
\]
There are at most $w$ such targets, including $u$. If $H=\ii A$ with $A$ rational and real skew-symmetric and return phase $-1$ at $T$, the grid and bound improve to $\frac2w\mathbb Z/\mathbb Z$ and $w/2$, respectively.
\end{theorem}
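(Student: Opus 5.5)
The plan is to combine Lemma~\ref{lem:field} with the description of roots of unity in imaginary quadratic fields. Suppose $v$ is reached from $u$ at time $\tau$ with $\tau/T\in\mathbb Q$. Lemma~\ref{lem:quadratic} gives $T=2\pi/\delta$, and Lemma~\ref{lem:field} gives
\[
 z=e^{-2\pi\ii\tau/T}\in J.
\]
Since $\tau/T$ is rational, $z$ is a root of unity. It therefore lies in $\mu(J)$, and its order divides $w$. This gives $w\tau/T\in\mathbb Z$, that is, $x_v\in\frac1w\mathbb Z/\mathbb Z$.

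For the count, I would use the fact from Lemma~\ref{lem:labels} that distinct targets in $\T_H(u)$ have distinct normalized transfer times. The targets reached at rational multiples of $T$ thus inject into $\frac1w\mathbb Z/\mathbb Z$, which has $w$ elements. The source $u$ itself corresponds to $x_u=0$, so the bound of $w$ includes $u$. One small point needs checking: Lemma~\ref{lem:field} is stated for a general target $v$, and the case $v=u$ (a return time) is covered trivially.

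For the oriented case with return phase $-1$, I would apply~\eqref{eq:halftimefield}, which gives $e^{-\pi\ii\tau/T}\in J$. This is again a root of unity when $\tau/T$ is rational, so its order divides $w$. It follows that $\tfrac{w}{2}\cdot\tfrac{\tau}{T}\in\mathbb Z$, that is, $x_v\in\frac2w\mathbb Z/\mathbb Z$. This grid has $w/2$ elements, and the injectivity argument gives the bound $w/2$.

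The only step I expect to need care is the parity issue, namely that $w$ is even, so the grid $\frac2w\mathbb Z/\mathbb Z$ makes sense. This holds in every case, since $w\in\{2,4,6\}$. A second point to verify is that the field $J$ in the statement is the same one produced in Lemma~\ref{lem:field} for every target. This holds because $J=\mathbb Q(\sqrt{-d\Delta})$ depends only on $H$ and $u$ (through $K$ and $\delta$), not on the target or the time.
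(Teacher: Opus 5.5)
Your proposal is correct and follows the paper's proof essentially step for step. Lemma~\ref{lem:field} puts $e^{-2\pi\ii x_v}$ in $\mu(J)$, or $e^{-\pi\ii x_v}$ via~\eqref{eq:halftimefield} when the return phase is $-1$, and the distinctness of normalized transfer times from Lemma~\ref{lem:labels} turns the grids into the bounds $w$ and $w/2$. Your extra checks, that $J$ does not depend on the target and that $w$ is even, are sound.
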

\begin{proof}
If $x_v$ is rational, then $e^{-2\pi\ii x_v}$ is a root of unity in $J$ by Lemma~\ref{lem:field}, proving the grid restriction. Distinct targets have distinct normalized transfer times by Lemma~\ref{lem:labels}, giving the bound. For the negative return phase, \eqref{eq:halftimefield} puts $e^{-\pi\ii x_v}$ in $\mu(J)$. Hence $wx_v\in2\mathbb Z$ for a representative $x_v\in[0,1)$, giving a grid of size $w/2$.
\end{proof}

For Hermitian matrices over $\mathbb Q(\ii)$, and in particular for mixed graphs, the sharper field-dependent conclusion is
\[
\renewcommand{\arraystretch}{1.3}
\begin{array}{c|c|c}
\text{Gap generator}&\substack{\text{Rational normalized}\\\text{transfer times}}&\text{Target bound}\\\hline
\delta\in\mathbb Q&\frac14\mathbb Z/\mathbb Z&4\\
\delta\in\mathbb Q_{>0}\sqrt3&\frac16\mathbb Z/\mathbb Z&6\\
\delta\in\mathbb Q_{>0}\sqrt\Delta,\ \Delta>1,\ \Delta\ne3
&\frac12\mathbb Z/\mathbb Z&2.
\end{array}
\]
The quadratic field $J$ is needed for this restriction. Merely placing the point $e^{-\ii\delta\tau}$ in the degree-four splitting field would not exclude eighth or twelfth roots of unity. Lemma~\ref{lem:field} excludes them by using both field conjugations.

Suppose PST occurs between distinct vertices at a rational multiple of their common minimum vertex period $T$. The earlier of the first transfer times in the two directions, divided by $T$, is consequently one of $1/6,1/4,1/3,1/2$. Corollary~\ref{cor:pairs} below shows that each value occurs even for a complete PST class of size two in an oriented graph.

\begin{corollary}\label{cor:pairs}
Let $0<\rho\leq1/2$. There exists a connected oriented graph with a complete PST class $\{a,b\}$ and common minimum vertex period $T$, whose first transfer times from $a$ to $b$ and from $b$ to $a$ are $\rho T$ and $(1-\rho)T$, respectively, if and only if $\cos(2\pi\rho)\in\mathbb Q$.
\end{corollary}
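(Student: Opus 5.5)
The plan is to reduce Corollary~\ref{cor:pairs} to Theorem~\ref{thm:schedules} applied to the two-element set $\Lambda=\{0,\rho\}$, read in $\mathbb R/\mathbb Z$. First we handle the translation between first transfer times and normalized transfer times. Suppose $\{a,b\}$ is a complete PST class with source $a$ and common minimum vertex period $T$. Lemma~\ref{lem:labels} says PST from $a$ to $b$ occurs exactly at times $t$ with $t/T\equiv x_b\pmod 1$, and PST from $b$ to $a$ occurs exactly at $t/T\equiv -x_b\pmod 1$. With the representative $x_b\in(0,1)$ (nonzero because $b\ne a$), the first positive transfer times are therefore $x_bT$ and $(1-x_b)T$. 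The stated times $\rho T$ and $(1-\rho)T$ thus hold precisely when $x_b=\rho$. Since $0<\rho\le 1/2$, this is a genuine nonzero element and $\Lambda=\{0,\rho\}$ has two elements.

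For necessity, take a connected oriented graph realizing the class. Then $\{0,\rho\}$ is the set of normalized transfer times of a complete PST class at the vertex $a$. This vertex is nonstationary and periodic by Lemma~\ref{lem:algebraicpst}, since it has PST to the distinct vertex $b$. Statement (i) of Theorem~\ref{thm:schedules} holds, so (iv) holds. Taking $x=\rho$ and $y=0$ gives $\cos(2\pi\rho)\in\mathbb Q$.

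For sufficiency, assume $\cos(2\pi\rho)\in\mathbb Q$. Condition (iv) for $\Lambda=\{0,\rho\}$ requires only $\cos 0=1$ and $\cos(\pm2\pi\rho)\in\mathbb Q$, so it holds. By (iv)$\Rightarrow$(ii), there is a connected oriented graph in which some vertex $a$ has complete PST class with normalized transfer time set exactly $\{0,\rho\}$. Hence the class is $\{a,b\}$ for a single vertex $b$ with $x_b=\rho$. By the first paragraph, the first transfer times are $\rho T$ and $(1-\rho)T$.

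The only point needing care is the identification of first transfer times with representatives in $[0,1)$. This is also where the hypothesis $\rho\le1/2$ enters: it only fixes which direction is called the earlier one, and does not affect the rationality condition, since $\cos(2\pi\rho)=\cos(2\pi(1-\rho))$. I expect no real obstacle beyond citing Lemma~\ref{lem:labels} correctly.
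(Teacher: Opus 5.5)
Your proposal is correct and follows the paper's proof: apply Theorem~\ref{thm:schedules} to $\Lambda=\{0,\rho\}$, where the only nontrivial cosine condition is $\cos(2\pi\rho)\in\mathbb Q$. Then use Lemma~\ref{lem:labels} to turn the normalized transfer time $\rho$ into the first transfer times $\rho T$ and $(1-\rho)T$. Your appeal to Lemma~\ref{lem:algebraicpst} for nonstationary periodicity, and your matching of first positive times with representatives in $(0,1)$, make explicit what the paper's three-line proof leaves implicit.
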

\begin{proof}
Apply Theorem~\ref{thm:schedules} to $\{0,\rho\}$. The only nontrivial cosine condition is the one displayed. Lemma~\ref{lem:labels} gives the first transfer times $\rho T$ and $(1-\rho)T$.
\end{proof}
Since $\rho\leq1/2$, the earlier of the two times is $\rho T$. PST occurs in both directions at that time if and only if $\rho=1/2$.

\begin{corollary}\label{cor:jointgrids}
A finite set $\Lambda\subset\mathbb Q/\mathbb Z$ containing zero is the set of normalized transfer times of a complete PST class at a periodic vertex in a connected graph if and only if
\[
 \Lambda\subseteq\tfrac14\mathbb Z/\mathbb Z
 \quad\text{or}\quad
 \Lambda\subseteq\tfrac16\mathbb Z/\mathbb Z.
\]
Every such set can already be realized in a connected oriented graph with return phase $+1$. An oriented realization with return phase $-1$ exists if and only if
\[
 \Lambda\subseteq\tfrac12\mathbb Z/\mathbb Z
 \quad\text{or}\quad
 \Lambda\subseteq\tfrac13\mathbb Z/\mathbb Z.
\]
\end{corollary}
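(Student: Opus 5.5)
The plan is to derive the corollary directly from Theorem~\ref{thm:schedules}, specialized to rational sets. For rational $x$, the point $e^{2\pi\ii x}$ is a root of unity. Condition (iii) of Theorem~\ref{thm:schedules} therefore asks that all $e^{2\pi\ii x}$, $x\in\Lambda$, lie in $\mu(J)$ for some imaginary quadratic field $J$. By the list of $w=|\mu(J)|$ preceding Theorem~\ref{thm:rational}, $\mu(J)$ is $\mu_4$, $\mu_6$, or $\mu_2$, and $\mu_2$ is contained in the other two. Hence (iii) holds exactly when $\Lambda\subseteq\frac14\mathbb Z/\mathbb Z$ or $\Lambda\subseteq\frac16\mathbb Z/\mathbb Z$.

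For the converse inclusion, note that $\frac14\mathbb Z/\mathbb Z$ corresponds to $\mu_4\subset\mathbb Q(\ii)$ and $\frac16\mathbb Z/\mathbb Z$ to $\mu_6\subset\mathbb Q(\sqrt{-3})$. The equivalence (i)$\Leftrightarrow$(ii)$\Leftrightarrow$(iii) then gives both the characterization and the oriented realization with return phase $+1$. The word ``periodic'' in the statement should mean nonstationary periodic, as in the theorem. A singleton $\{0\}$ is covered by either grid.

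For return phase $-1$, I would use condition~\eqref{eq:negativephasecondition}. Its proof shows that this condition is equivalent to the half-angle points $e^{\pi\ii x}$, taken with representatives in $[0,1)$, lying in a common imaginary quadratic field. These points are again roots of unity, so they must all lie in $\mu_4$ or all in $\mu_6$. The condition $e^{\pi\ii x}\in\mu_4$ means $x\in\frac12\mathbb Z$, and $e^{\pi\ii x}\in\mu_6$ means $x\in\frac13\mathbb Z$. This gives $\Lambda\subseteq\frac12\mathbb Z/\mathbb Z$ or $\Lambda\subseteq\frac13\mathbb Z/\mathbb Z$. Conversely, each such set places all half-angle points in $\mu_4$ or $\mu_6$, so the negative-phase part of Theorem~\ref{thm:schedules} supplies the realization. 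The necessity also matches the grid $\frac2w\mathbb Z/\mathbb Z$ of Theorem~\ref{thm:rational}.

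The only delicate point is the choice of representatives in the half-angle step. Replacing $x$ by $x+1$ negates $e^{\pi\ii x}$, which keeps it inside $\mu_4$ or $\mu_6$, since both groups contain $-1$. I would state this explicitly so the conclusion does not depend on working in $[0,1)$. Apart from this, the argument is bookkeeping on top of the theorem.
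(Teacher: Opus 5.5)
Your proposal is correct and follows essentially the same route as the paper. You specialize condition (iii) of Theorem~\ref{thm:schedules}, and for return phase $-1$ its half-angle analogue, to roots of unity, then use that $\mu(J)$ lies in $\mu_4$ or $\mu_6$. Your remark that $-1\in\mu_4\cap\mu_6$ makes the half-angle step independent of representatives; the paper leaves this implicit.
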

\begin{proof}
In a common imaginary quadratic field, all the corresponding unit-circle points are roots of unity. The group generated by these points is contained either in the fourth roots or in the sixth roots, by the quadratic root-of-unity classification used in Theorem~\ref{thm:rational}. Conversely, if $\Lambda$ lies in either displayed grid, the points $e^{2\pi\ii x}$, $x\in\Lambda$, lie in $\mathbb Q(\ii)$ or $\mathbb Q(\sqrt{-3})$, respectively. Theorem~\ref{thm:schedules} therefore realizes $\Lambda$ with return phase $+1$.

For return phase $-1$, the points $e^{\pi\ii x}$ must lie in one imaginary quadratic field. When $x$ is rational, these are roots of unity of order dividing four or six. Consequently $x$ lies in the half-period grid or the third-period grid, respectively. Conversely, either grid puts the half-angle points in $\mathbb Q(\ii)$ or $\mathbb Q(\sqrt{-3})$, and the negative-phase part of Theorem~\ref{thm:schedules} applies.
\end{proof}
For example, $\{0,1/4,1/3\}$ cannot occur as a set of normalized transfer times, although each of $1/4$ and $1/3$ is individually realizable for a pair. The condition on a common field is a joint restriction.

Every size $1,2,3,4,5,6$ therefore occurs with all normalized transfer times rational. Corollary~\ref{cor:pairs} also shows that all four possible rational values of the earlier normalized transfer time are attained by oriented graphs with a complete class of size two.

For a complete PST class $C$ with minimum vertex period $T$, define
\[
 b\sim_{\mathbb Q}c
 \quad\Longleftrightarrow\quad
 \text{PST occurs from $b$ to $c$ at some }t\text{ with }t/T\in\mathbb Q.
\]
By~\eqref{eq:labeldifference}, this is equivalent to $x_c-x_b\in\mathbb Q/\mathbb Z$. In particular, $\sim_{\mathbb Q}$ is an equivalence relation, independent of the choice of source used to define the normalized transfer times.

Let $J$ be the field supplied by Lemma~\ref{lem:field} and put $z_b=e^{2\pi\ii x_b}$. The difference $x_c-x_b$ is rational modulo one exactly when $z_c/z_b\in\mu(J)$. Thus the equivalence classes correspond to the nonempty intersections
\[
 \{z_b:b\in C\}\cap z\mu(J),\qquad z\in J,\quad |z|=1.
\]
The following theorem classifies the finite partitions that can occur.

\begin{theorem}\label{thm:rationalblocks}
A partition of a finite nonempty set is realizable as the partition of a complete PST class under $\sim_{\mathbb Q}$, at periodic vertices in a connected graph, if and only if every part has size at most six. Every such partition has a connected oriented realization with return phase $+1$ at the minimum vertex period.

A connected oriented realization with return phase $-1$ at the minimum vertex period exists if and only if every part has size at most three.
\end{theorem}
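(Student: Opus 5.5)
Necessity comes first and is short. By Lemma~\ref{lem:labels} and the discussion before the theorem, each part of the $\sim_{\mathbb Q}$ partition is a set of vertices $b$ whose points $z_b=e^{2\pi\ii x_b}$ lie in a single coset $z\mu(J)$. Distinct vertices have distinct normalized transfer times, so a part has at most $|\mu(J)|=w\leq6$ elements. For the oriented case with return phase $-1$, fix any vertex $b$ of a part and use it as the source; this is legitimate because Lemma~\ref{lem:labels} shows that all class vertices share return times and return phases. The rational differences $x_c-x_b$ for $c$ in the part are then rational normalized transfer times from $b$, and Theorem~\ref{thm:rational} bounds their number by $w/2\leq3$.

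For sufficiency, given a partition with parts $P_1,\ldots,P_m$ of sizes $k_1,\ldots,k_m\leq6$, I will construct a finite set $\Lambda\subset\R/\mathbb Z$ containing zero, satisfying the cosine condition of Theorem~\ref{thm:schedules}, whose rational-difference classes have exactly these sizes. I will then apply Theorem~\ref{thm:schedules}(ii). Work in $J=\mathbb Q(\sqrt{-3})$ with $\mu(J)$ the sixth roots of unity. I need unit-circle points $\zeta_1=1,\zeta_2,\ldots,\zeta_m\in J$ whose pairwise ratios are not roots of unity. The natural choice is $\zeta_i=\zeta^{i-1}$ with $\zeta=(1+4\sqrt{-3})/7$, which has norm $1$ and is not a root of unity because $7$ ramifies or splits nontrivially. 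To make the argument clean, I would use a prime $\pi$ of $J$ splitting $7$ and set $\zeta=\pi/\bar\pi$; then $\zeta^n$ is not a unit for $n\ne0$ by unique factorization of ideals, so $\zeta^n\notin\mu(J)$. For part $P_i$ take $k_i$ distinct elements of $\zeta_i\mu(J)$, and let $\Lambda$ be the set of their arguments divided by $2\pi$, normalized so that $0$ is included. All points lie in $J$, so Theorem~\ref{thm:schedules} realizes $\Lambda$ in a connected oriented graph with return phase $+1$. Two vertices are $\sim_{\mathbb Q}$-related exactly when their points lie in the same coset, which reproduces the prescribed partition.

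For return phase $-1$ with parts of size at most three, condition~\eqref{eq:negativephasecondition} requires the half-angle points $e^{\pi\ii x}$ to lie in a common imaginary quadratic field. I will choose half-angle points $h_b$ in $J=\mathbb Q(\sqrt{-3})$, with part $P_i$ given by distinct elements of $\zeta^{i-1}\{1,\omega,\omega^2\}$, where $\omega$ is a primitive cube root of unity. Then set $z_b=h_b^2$. The squaring map must keep the elements within a part distinct and must keep the parts separated. Within a part, the squares are $\zeta^{2(i-1)}\{1,\omega^2,\omega\}$, which are distinct. Across parts, the ratio $\zeta^{2(i-j)}$ is not a root of unity. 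Some care is needed because $x$ is defined from $h_b$ via a representative in $[0,1)$, and the representative of $h_b$ is determined only up to sign. Replacing $h_b$ by $-h_b$ keeps the point in $J$, so the half-angle condition is unaffected. The negative-phase part of Theorem~\ref{thm:schedules} then gives the realization.

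The main obstacle is the non-root-of-unity claim for $\zeta$, together with checking that the cosets really produce distinct normalized transfer times. I expect the factorization argument $\zeta=\pi/\bar\pi$ to settle the first point, and the distinctness of coset elements to settle the second.
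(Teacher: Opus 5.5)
Your proposal is correct and follows the paper's proof essentially step for step. Necessity comes from the coset bound $|\mu(J)|\le 6$, with Theorem~\ref{thm:rational} applied from a reference vertex of each part for return phase $-1$. Sufficiency works in $J=\mathbb Q(\sqrt{-3})$ using the cosets $\zeta^{j}\mu(J)$ of an infinite-order norm-one element $\zeta$, and squaring of cube-root-of-unity cosets handles return phase $-1$.

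The only difference is your explicit choice $\zeta=(1+4\sqrt{-3})/7=(2+\sqrt{-3})/(2-\sqrt{-3})$ where the paper only argues existence. Its trace $2/7$ already shows it is not an algebraic integer, hence not a root of unity. You should also drop the ``ramifies'' alternative, since a ramified prime would make $\pi/\bar\pi$ a unit.
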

\begin{proof}
The coset description bounds each part by $|\mu(J)|\leq6$. For an oriented graph with return phase $-1$, apply Theorem~\ref{thm:rational} from a reference vertex in each part; the bound becomes $|\mu(J)|/2\leq3$.

For sufficiency, we first give a construction in any imaginary quadratic field $J$. Fix $m\geq1$ and nonempty subsets $\mathcal R_0,\ldots,\mathcal R_{m-1}\subseteq\mu(J)$, each containing one. Choose a unit-modulus element $\zeta\in J$ of infinite multiplicative order. Such elements exist because, for $J=\mathbb Q(\sqrt{-\Delta})$, the points
\[
 \frac{1-\Delta q^2}{1+\Delta q^2}
 +\frac{2q}{1+\Delta q^2}\sqrt{-\Delta},\qquad q\in\mathbb Q,
\]
are infinitely many distinct norm-one elements, whereas $\mu(J)$ is finite. These points are also dense in the unit circle: for real $q$ the parametrization covers the circle except $-1$, and $\mathbb Q$ is dense in $\mathbb R$. Set
\[
 \mathcal Z=\bigcup_{j=0}^{m-1}\zeta^j\mathcal R_j.
\]
Apply Theorem~\ref{thm:schedules} to $\Lambda=\{x\in\mathbb R/\mathbb Z:e^{2\pi\ii x}\in\mathcal Z\}$. Since $1\in\mathcal Z$, this set contains zero. The subsets $\zeta^j\mathcal R_j$ lie in distinct cosets of $\mu(J)$: if $\zeta^j\mu(J)=\zeta^\ell\mu(J)$ for $j\ne\ell$, then $\zeta^{j-\ell}$ is a root of unity, contrary to the choice of $\zeta$. Thus they are precisely the equivalence classes of $\sim_{\mathbb Q}$, with the prescribed ratios of unit-circle points.

Given any partition with parts of sizes at most six, take $J=\mathbb Q(\sqrt{-3})$ and choose $\mathcal R_j$ of the corresponding sizes in its six-element root-of-unity group. Theorem~\ref{thm:schedules} gives the return phase $+1$.

For a partition with parts of sizes at most three and prescribed return phase $-1$, take $J=\mathbb Q(\sqrt{-3})$, choose the same infinite-order $\zeta$, and choose $\mathcal R_j$ of the prescribed sizes from the three cube roots of unity, each containing one. Set $\mathcal Z=\bigcup_j\zeta^j\mathcal R_j$ and define $\Lambda$ by
\[
 \{e^{2\pi\ii x}:x\in\Lambda\}=\{z^2:z\in\mathcal Z\}.
\]
Squaring is injective on $\mathcal Z$: within each $\zeta^j\mathcal R_j$, this follows because squaring is injective on the group of cube roots of unity; between different subsets, it follows from the infinite order of $\zeta$. Every half-angle point of $\Lambda$ is $z$ or $-z$ for some $z\in\mathcal Z$, so it lies in $J$. A quotient is a root of unity if and only if its square is, so squaring preserves the prescribed equivalence classes. The negative-phase realization in Theorem~\ref{thm:schedules} therefore has exactly the prescribed partition.
\end{proof}

The construction gives a field-dependent refinement. Fix an imaginary quadratic field $J$, an integer $m\geq1$, and nonempty subsets $\mathcal R_0,\ldots,\mathcal R_{m-1}\subseteq\mu(J)$ containing one. There is a connected oriented realization with equivalence classes $C_0,\ldots,C_{m-1}$ such that, for suitable reference vertices $b_j\in C_j$,
\[
 \{z_b/z_{b_j}:b\in C_j\}=\mathcal R_j.
\]
Here $z_b=e^{2\pi\ii x_b}$, with normalized transfer times relative to a fixed source in the complete PST class. This realization has return phase $+1$.

The theorem prescribes all equivalence classes of $\sim_{\mathbb Q}$ within one complete PST class. In particular, for every $N\geq6$, a partition with one part of size six and $N-6$ singleton parts gives a complete PST class of size $N$ with exactly six vertices reached at rational multiples of the period from a source in the first part. More generally, any subset of $\mu(J)$ containing one can be the set of unit-circle points corresponding to those targets, while arbitrarily many other vertices are reached only at irrational multiples of the period.

\subsection{Coordinate permutations and switching automorphisms}\label{sec:switching}
Pairwise transfer need not occur at one common time. Let $C=\T_H(a)$ have common minimum vertex period $T$, and write $\Lambda=\{x_b:b\in C\}$ as in Lemma~\ref{lem:labels}. To determine when one transition matrix permutes all coordinate lines of $C$, define the \emph{translation stabilizer} of $\Lambda$ by
\[
 \operatorname{Stab}(\Lambda)=\{s\in\R/\mathbb Z:\Lambda+s=\Lambda\}.
\]
A unitary monomial matrix has exactly one nonzero entry in each row and column. If it commutes with $H$, we call it a \emph{switching automorphism}. This extends the signed permutation definition for oriented graphs in~\cite{GL}.

\begin{lemma}\label{lem:stabilizer}
Let $C=\T_H(a)$, where $a$ has minimum vertex period $T$, and let $\Lambda=\{x_b:b\in C\}$ be its set of normalized transfer times relative to $a$. Then $\operatorname{Stab}(\Lambda)$ is a finite cyclic group, of order $h$ dividing $|C|$, and
\[
 \operatorname{Stab}(\Lambda)=\tfrac1h\mathbb Z/\mathbb Z.
\]
The matrix $U_H(t)$ maps every coordinate line of $C$ to a coordinate line of $C$ if and only if $t/T\pmod1\in \operatorname{Stab}(\Lambda)$. Its action is translation of the normalized transfer times by $t/T$.

If a switching automorphism maps a vertex of $C$ into $C$, it preserves $C$ and induces a translation in $\operatorname{Stab}(\Lambda)$.
\end{lemma}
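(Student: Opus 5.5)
The plan has three parts: the group structure of the stabilizer, the description of which times permute coordinate lines, and the behavior of switching automorphisms.

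\emph{Structure of $\operatorname{Stab}(\Lambda)$.} The stabilizer is a subgroup of $\R/\mathbb Z$. Since $0\in\Lambda$, every $s$ in the stabilizer satisfies $s=0+s\in\Lambda$. So $\operatorname{Stab}(\Lambda)\subseteq\Lambda$ is finite, and a finite subgroup of $\R/\mathbb Z$ equals $\frac1h\mathbb Z/\mathbb Z$ for $h=|\operatorname{Stab}(\Lambda)|$. The stabilizer acts freely on $\Lambda$ by translation, so $\Lambda$ is a disjoint union of cosets, and $h$ divides $|\Lambda|=|C|$; here Lemma~\ref{lem:labels} gives that the labels $x_b$ are distinct.

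\emph{Which times permute the coordinate lines of $C$.} Put $s=t/T\bmod 1$. By Lemma~\ref{lem:labels}, $U_H(t)$ sends the line at $b\in C$ to a coordinate line exactly when some $c\in C$ has $x_c\equiv x_b+s$. In that case the image is the line at $c$, and \eqref{eq:classphases} supplies the phase. A transfer from $b$ can only land in $\T_H(b)=C$, so no target lies outside $C$.

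\begin{itemize}
\item If $s\in\operatorname{Stab}(\Lambda)$, then $x_b+s\in\Lambda$ for every $b$, so every line of $C$ goes to a line of $C$, and the action is translation by $s$.
\item Conversely, if every line of $C$ goes to a line of $C$, then $\Lambda+s\subseteq\Lambda$. Translation is injective and $\Lambda$ is finite, so $\Lambda+s=\Lambda$.
\end{itemize}

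\emph{Switching automorphisms.} Let $P$ be monomial and unitary with $PH=HP$, and suppose $P\e_a=\lambda\e_{a'}$ with $a'\in C$. I would first show $P$ maps $C$ into $C$. Since $P$ commutes with $U_H(t)$, if $U_H(t)\e_a=\gamma\e_b$ then $U_H(t)\e_{a'}$ is a multiple of $P\e_b$, which is a coordinate vector at some $\pi(b)$. Hence $\pi(b)\in\T_H(a')=C$. The map $b\mapsto\pi(b)$ is injective and $C$ is finite, so $P$ preserves $C$. Return times are preserved, and the same computation shows $x_{\pi(b)}$ relative to $a'$ equals $x_b$ relative to $a$. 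Rebasing through \eqref{eq:labeldifference}, labels relative to $a'$ are $x_c-x_{a'}$, so
\[
 x_{\pi(b)}=x_b+x_{a'}.
\]
Bijectivity of $\pi$ on $C$ then gives $\Lambda+x_{a'}=\Lambda$, so the induced map is translation by $x_{a'}\in\operatorname{Stab}(\Lambda)$.

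The main point requiring care is this last step. One must track that the transfer times and the minimum vertex period are the same relative to $a$ and $a'$, so that the label relation is exactly a translation rather than merely a bijection of sets.
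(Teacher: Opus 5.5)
Your proposal is correct and follows essentially the same route as the paper: the stabilizer embeds in $\Lambda$ and acts freely on it, the time criterion is read off from \eqref{eq:labeldifference}, and commuting a switching automorphism with $U_H(Tx_b)$ gives $x_{\pi(b)}=x_b+x_{a'}$, which is exactly the paper's computation with $d=\pi(a)$. One small detail: the hypothesis concerns an arbitrary vertex of $C$, not necessarily $a$. Since $C=\T_H(b)$ for every $b\in C$, running your first step from that vertex shows that $P$ preserves $C$; then $P\e_a$ lies on a line of $C$, and the rest of your argument goes through unchanged.
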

\begin{proof}
For any fixed $x\in\Lambda$, the map $s\mapsto x+s$ embeds $\operatorname{Stab}(\Lambda)$ into $\Lambda$, so the stabilizer is finite. A finite subgroup of $\R/\mathbb Z$ of order $h$ is $\frac1h\mathbb Z/\mathbb Z$. Translation acts freely on $\Lambda$, and hence $h$ divides $|\Lambda|=|C|$. The assertion about $U_H(t)$ follows directly from~\eqref{eq:labeldifference}.

A commuting monomial matrix $\mathcal M$ sends complete PST classes to complete PST classes. If its image of $C$ intersects $C$, the two classes are equal. Write $U_H(Tx_b)\e_a=\gamma_b\e_b$ and $\mathcal M\e_b=\eta_b\e_{\pi(b)}$ for $b\in C$, where $\pi$ is the induced permutation, and put $d=\pi(a)$. Commutation with $U_H(Tx_b)$ gives
\[
 U_H(Tx_b)\e_d=\gamma_b\eta_b\eta_a^{-1}\e_{\pi(b)}.
\]
Lemma~\ref{lem:labels} therefore gives $x_{\pi(b)}=x_b+x_d$ in $\R/\mathbb Z$. Thus $\pi$ acts by translation and $x_d\in\operatorname{Stab}(\Lambda)$. Every element of $\operatorname{Stab}(\Lambda)$ is rational modulo one.
\end{proof}

If all differences between distinct normalized transfer times are irrational modulo one, the translation stabilizer is trivial. Lemma~\ref{lem:stabilizer} then excludes every switching automorphism sending one class vertex to a distinct class vertex.

\begin{proposition}\label{prop:closure}
Let $C=\T_H(a)$, where $a$ has minimum vertex period $T$, and write $\Lambda=\{x_b:b\in C\}$ as in Lemma~\ref{lem:labels}. Suppose $|C|=k\geq2$, and let $\tau$ be the first positive time at which PST occurs from $a$ to a distinct vertex. The following are equivalent:
\begin{enumerate}[label=\textup{(\roman*)},leftmargin=*]
\item $U_H(\tau)$ maps every coordinate line of $C$ to a coordinate line of $C$;
\item $\Lambda=\frac1k\mathbb Z/\mathbb Z$;
\item $|\operatorname{Stab}(\Lambda)|=k$.
\end{enumerate}
When they hold, $\tau=T/k$ and the induced permutation of $C$ is a single $k$-cycle.
\end{proposition}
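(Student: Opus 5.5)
The plan is to translate everything into normalized transfer times via Lemma~\ref{lem:labels} and then reduce all three conditions to statements about $\operatorname{Stab}(\Lambda)$ from Lemma~\ref{lem:stabilizer}. Put $s=\tau/T$. Since $\tau$ is the first positive transfer time from $a$ to a distinct vertex, the representative of $s$ in $(0,1)$ is the least nonzero representative in $[0,1)$ of an element of $\Lambda$. Equivalently, $s=\min\{x_b:b\in C\setminus\{a\}\}$ with representatives in $[0,1)$. This holds because every transfer time to $b$ is $T(x_b+n)$, and the least positive one is $Tx_b$.

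First I show (ii)$\Leftrightarrow$(iii). If $\Lambda=\frac1k\mathbb Z/\mathbb Z$, this set is a subgroup, so it is stabilized by its own translations; hence $|\operatorname{Stab}(\Lambda)|\ge k$. Lemma~\ref{lem:stabilizer} gives $h\mid k$, so $h=k$. Conversely, suppose $h=k$. For $0\in\Lambda$, the orbit $0+\operatorname{Stab}(\Lambda)=\frac1k\mathbb Z/\mathbb Z$ lies in $\Lambda$ and has $k=|\Lambda|$ elements, so the two sets are equal.

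Next I show (i)$\Leftrightarrow$(iii). By Lemma~\ref{lem:stabilizer}, (i) is equivalent to $s\in\operatorname{Stab}(\Lambda)=\frac1h\mathbb Z/\mathbb Z$. If (iii) holds, then $\Lambda=\frac1k\mathbb Z/\mathbb Z$ by the previous step. The least nonzero representative is therefore $s=1/k$, which lies in the stabilizer, so (i) holds and $\tau=T/k$. Conversely, assume (i). Then $s\in\operatorname{Stab}(\Lambda)$, so $s=j/h$ for some $1\le j<h$. Also $1/h\in\operatorname{Stab}(\Lambda)$, and translating $0\in\Lambda$ gives $1/h\in\Lambda$ as a nonzero element, since $k\ge2$ forces $h\ge2$. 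I need to check this carefully. If $h=1$, the stabilizer is trivial, and $s\ne0$ cannot lie in it. So $h\ge2$, and minimality of $s$ forces $s\le 1/h$, hence $s=1/h$.

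The main obstacle is upgrading $h$ to $k$. The stabilizer acts freely on $\Lambda$, and $\Lambda$ is a union of $k/h$ cosets $x+\frac1h\mathbb Z$. Each coset meets the interval $(0,1/h)$ unless it is the coset of $0$. Minimality of $s=1/h$ forbids any element of $\Lambda$ in $(0,1/h)$, so only the coset of $0$ occurs. Hence $k=h$ and $\Lambda=\frac1k\mathbb Z/\mathbb Z$.

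Finally, under these conditions, Lemma~\ref{lem:stabilizer} says $U_H(\tau)$ acts on $C$ by the translation $x\mapsto x+1/k$ on $\frac1k\mathbb Z/\mathbb Z$. That translation is a single $k$-cycle, and $\tau=T/k$ as shown above.
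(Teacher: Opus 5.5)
Your proposal is correct and follows essentially the same route as the paper: minimality of $s=\tau/T$ together with $1/h\in\operatorname{Stab}(\Lambda)\subseteq\Lambda$ forces $s=1/h$, and then every $\operatorname{Stab}(\Lambda)$-orbit in $\Lambda$ must have zero as its representative in $[0,1/h)$, giving $\Lambda=\operatorname{Stab}(\Lambda)=\frac1k\mathbb Z/\mathbb Z$. Your explicit check that $h\geq2$ and your separate proof of (ii)$\Leftrightarrow$(iii) via $h\mid k$ only spell out steps that the paper leaves brief.
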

\begin{proof}
Assume \textup{(i)}, and let $h=|\operatorname{Stab}(\Lambda)|$. Then $s=\tau/T$ belongs to $\operatorname{Stab}(\Lambda)\setminus\{0\}$. Since $1/h\in \operatorname{Stab}(\Lambda)\subseteq\Lambda$, minimality gives $s=1/h$. Every orbit of $\operatorname{Stab}(\Lambda)$ on $\Lambda$ has a representative in $[0,1/h)$. Minimality allows only the representative zero. Thus $\Lambda=\operatorname{Stab}(\Lambda)$ and $h=k$, proving \textup{(ii)}. The equivalence of \textup{(ii)} and \textup{(iii)} follows from $0\in\Lambda$ and the description of the stabilizer in Lemma~\ref{lem:stabilizer}. Finally, \textup{(ii)} gives $\tau=T/k$ and the stated cyclic action.
\end{proof}
The five-vertex class in Section~\ref{sec:six} has normalized transfer times $0,1/6,2/6,3/6,4/6$. Its stabilizer is trivial, and the transition at $T/6$ does not permute the five coordinate lines. Thus pairwise PST alone does not imply the cyclic conclusion stated in~\cite[Lemma 10 and Theorem 11]{SL}; condition \textup{(i)} of Proposition~\ref{prop:closure} is needed. Under the Bose--Mesner hypothesis of~\cite{CPRS}, that condition follows from~\cite[Lemma 2.1]{CPRS}.

\begin{corollary}\label{cor:symmetrybound}
Let $C=\T_H(a)$ for a Hermitian matrix over an imaginary quadratic field, where $a$ has minimum vertex period $T$. Let $\Lambda$ be the set of normalized transfer times, $h=|\operatorname{Stab}(\Lambda)|$, and $w=|\mu(J)|$ for the field in~\eqref{eq:targetfield}. Then
\[
 h\mid w,\qquad h\mid |C|.
\]
The permutations induced on $C$ by switching automorphisms preserving $C$ form a cyclic group of order dividing $h$. For an oriented graph with return phase $-1$ at $T$, also $h\mid w/2$.
\end{corollary}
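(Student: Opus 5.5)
The plan is to read everything off Lemma~\ref{lem:stabilizer}, using Lemma~\ref{lem:field} for the arithmetic input. First I show $h\mid|C|$. By Lemma~\ref{lem:stabilizer}, $\operatorname{Stab}(\Lambda)=\frac1h\mathbb Z/\mathbb Z$ acts freely by translation on $\Lambda$. Also $|\Lambda|=|C|$, because distinct targets have distinct normalized transfer times by Lemma~\ref{lem:labels}.

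For $h\mid w$, note that $0\in\Lambda$, so $\operatorname{Stab}(\Lambda)=\operatorname{Stab}(\Lambda)+0\subseteq\Lambda$. In particular $1/h\in\Lambda$ is a normalized transfer time from $a$. That is, some vertex $b\in C$ is reached at time $T/h$, which is a rational multiple of $T$. Lemma~\ref{lem:field} then puts $e^{-2\pi\ii/h}$ in $J$. This is a primitive $h$th root of unity lying in $\mu(J)$, a cyclic group of order $w$, so $h\mid w$.

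For the oriented case with return phase $-1$, I would instead use~\eqref{eq:halftimefield}. It puts $e^{-\pi\ii/h}$, a primitive $2h$th root of unity, in $J$. Hence $2h\mid w$, that is, $h\mid w/2$. Equivalently, Theorem~\ref{thm:rational} forces $1/h\in\frac2w\mathbb Z/\mathbb Z$, and since $1/h$ has exact order $h$ this gives $h\mid w/2$.

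For the group statement, let $G$ be the group of switching automorphisms $\mathcal M$ with $\mathcal M$ preserving $C$. By Lemma~\ref{lem:stabilizer}, each such $\mathcal M$ induces the permutation $\pi$ of $C$ given by translation $x_b\mapsto x_b+x_{\pi(a)}$, with $x_{\pi(a)}\in\operatorname{Stab}(\Lambda)$. The map $\mathcal M\mapsto x_{\pi(a)}$ is a homomorphism into $\operatorname{Stab}(\Lambda)$, since a composition of translations is the translation by the sum. Under this map the induced permutation is determined by the translation amount, so the group of induced permutations embeds into the cyclic group $\frac1h\mathbb Z/\mathbb Z$. It is therefore cyclic of order dividing $h$.

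The only delicate point is checking that translations compose correctly: the induced permutation of $\mathcal M\mathcal M'$ is the composite of the two translations. This follows immediately because each $x_{\pi(b)}-x_b$ is the constant $x_{\pi(a)}$. Everything else is direct bookkeeping.
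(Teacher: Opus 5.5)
Your proposal is correct and follows essentially the same route as the paper. Lemma~\ref{lem:stabilizer} gives $\operatorname{Stab}(\Lambda)=\frac1h\mathbb Z/\mathbb Z$ and $h\mid|C|$; the root-of-unity restriction from Lemma~\ref{lem:field}, which the paper invokes through the grids of Theorem~\ref{thm:rational}, gives $h\mid w$ and $h\mid w/2$; and your explicit homomorphism $\mathcal M\mapsto x_{\pi(a)}$ spells out the paper's brief remark that the induced switching permutations form a subgroup of the cyclic translation group.
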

\begin{proof}
By Lemma~\ref{lem:stabilizer}, $\operatorname{Stab}(\Lambda)=\frac1h\mathbb Z/\mathbb Z$ and $h\mid|C|$. Since $0\in\Lambda$, every element of $\operatorname{Stab}(\Lambda)$ is a rational normalized transfer time. Theorem~\ref{thm:rational} gives
\[
 \operatorname{Stab}(\Lambda)\subseteq\tfrac1w\mathbb Z/\mathbb Z,
\]
so $h\mid w$. Translation by $1/h$ has orbits of size $h$. The induced switching permutations form a subgroup of this cyclic translation group and also act freely. In the oriented case with return phase $-1$, the sharper grid in Theorem~\ref{thm:rational} gives $h\mid w/2$.
\end{proof}

The translation by $1/h$ shows that $U_H(T/h)$ acts on the coordinate lines of $C$ as a disjoint union of $h$-cycles. A transitive induced switching group therefore forces $|C|\in\{1,2,3,4,6\}$, or $|C|\in\{1,2,3\}$ in the oriented case with return phase $-1$. If $\gcd(|C|,6)=1$, the divisibility conditions give $h=1$, so every switching automorphism preserving $C$ fixes each of its vertices.

\subsection{Classes with irrational normalized transfer times}
Taking every part in Theorem~\ref{thm:rationalblocks} to be a singleton gives classes with no rational transfer times between distinct vertices. To control the size of the eigenvalue support as well, we use the following formula for the Fej\'er kernel.

\begin{lemma}[{\cite[Chapter~2, Lemma~5.1, p.~53]{SS}}]\label{lem:fejer}
For an integer $k\geq1$, the Fej\'er kernel
\[
 \mathcal F_k(t)=\frac1k\sum_{j=0}^{k-1}\sum_{m=-j}^{j}e^{\ii mt}
 \qquad(t\in\mathbb R)
\]
satisfies
\[
 \mathcal F_k(t)=\frac{\sin^2(kt/2)}{k\sin^2(t/2)}
 \qquad(t\notin2\pi\mathbb Z),
\]
with continuous extension $\mathcal F_k(2\pi r)=k$ for $r\in\mathbb Z$.
\end{lemma}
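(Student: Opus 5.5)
We need to prove the Fejér kernel formula: $\mathcal F_k(t)=\sin^2(kt/2)/(k\sin^2(t/2))$ for $t\notin2\pi\mathbb Z$, with value $k$ at multiples of $2\pi$. This is a classical identity; the plan is a direct geometric-sum computation.

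Put $\omega=e^{\ii t}$ and $s=e^{\ii t/2}$, so that $\omega=s^2$.

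First, compute each inner Dirichlet sum. For $\omega\ne1$, the geometric series gives
\[
 D_j(t)=\sum_{m=-j}^{j}\omega^m=\frac{\omega^{j+1}-\omega^{-j}}{\omega-1}=\frac{s^{2j+1}-s^{-(2j+1)}}{s-s^{-1}}.
\]
Here the last form comes from multiplying numerator and denominator by $s^{-1}$. Since $s^{n}-s^{-n}=2\ii\sin(nt/2)$, this is
\[
 D_j(t)=\frac{\sin((2j+1)t/2)}{\sin(t/2)}.
\]

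Second, sum over $j$ by multiplying by $\sin(t/2)$ and using the product-to-sum identity
\[
 \sin(t/2)\sin((2j+1)t/2)=\tfrac12\bigl(\cos(jt)-\cos((j+1)t)\bigr).
\]
The sum over $0\le j<k$ telescopes to $\tfrac12(1-\cos(kt))=\sin^2(kt/2)$. Therefore
\[
 \sum_{j=0}^{k-1}D_j(t)=\frac{\sin^2(kt/2)}{\sin^2(t/2)},
\]
and dividing by $k$ gives the stated formula.

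Third, handle $t=2\pi r$. There every term $e^{\ii mt}$ equals $1$, so $D_j=2j+1$. Then $\sum_{j=0}^{k-1}(2j+1)=k^2$, and $\mathcal F_k(2\pi r)=k$. Since $\mathcal F_k$ is a trigonometric polynomial and hence continuous, this value is the continuous extension of the closed form at those points.

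No step is a real obstacle; the only care needed is the half-angle normalization in the first step.
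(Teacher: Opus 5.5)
Your computation is correct and complete. It handles division by $\sin(t/2)$, which is nonzero exactly when $t\notin2\pi\mathbb Z$, and it uses continuity of the trigonometric polynomial at $t=2\pi r$. The paper gives no proof of its own and only cites Stein--Shakarchi. The standard argument there decomposes $k\mathcal F_k$ into the Dirichlet kernels $D_0,\dots,D_{k-1}$ and sums geometric series in $\omega=e^{\ii t}$, which is essentially your route. Your product-to-sum telescoping is just a trigonometric way of carrying out the final summation over $j$.
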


For $k\geq2$, put $K_k(t)=\mathcal F_k(t)/k$. Expanding the finite sum gives
\begin{equation}\label{eq:normalizedfejer}
 K_k(t)=\frac1k+\frac2{k^2}\sum_{m=1}^{k-1}(k-m)\cos(mt).
\end{equation}
Lemma~\ref{lem:fejer} gives $K_k(0)=1$ and $K_k(2\pi j/k)=0$ for $1\leq j<k$. The cosine coefficients in~\eqref{eq:normalizedfejer} are strictly positive in every degree from zero to $k-1$.

\begin{corollary}\label{thm:main}
For every integer $k\geq2$ there is a connected oriented graph with a complete PST class $C$ of size $k$ and minimum vertex period $T$ such that every transfer between distinct vertices of $C$ occurs at an irrational multiple of $T$. No switching automorphism sends one vertex of $C$ to a distinct vertex of $C$. The common eigenvalue support of these vertices can be chosen to have exactly $4k-3$ elements.
\end{corollary}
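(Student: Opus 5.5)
The plan is to apply the sufficiency construction of Theorem~\ref{thm:schedules} to a well-chosen set of transfer times. Instead of the product kernel of Lemma~\ref{lem:positive}, we use a perturbation of the Fej\'er kernel~\eqref{eq:normalizedfejer}, so that the degree of the cosine polynomial, and hence the eigenvalue support, is controlled. We work in $J=\mathbb Q(\ii)$ (any imaginary quadratic field would do). As in the proof of Theorem~\ref{thm:rationalblocks}, the norm-one elements of $J$ are dense in the unit circle. We choose such a $\zeta=e^{\ii\beta}$ of infinite multiplicative order, with $\beta$ very close to $2\pi/k$, and set $x_j=j\beta/2\pi$ for $0\leq j<k$. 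The differences $x_j-x_h=(j-h)\beta/2\pi$ are nonzero and irrational modulo one, because $\zeta^{j-h}$ is not a root of unity. Hence every transfer between distinct class vertices occurs at an irrational multiple of $T$ by~\eqref{eq:labeldifference}. The translation stabilizer of $\Lambda$ is trivial, and Lemma~\ref{lem:stabilizer} then excludes every switching automorphism sending a class vertex to a distinct class vertex.

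The key step is the polynomial. We need a cosine polynomial $F(t)=\sum_{m=0}^{M}b_m\cos(mt)$ with rational coefficients, all strictly positive for $0\leq m\leq M$, with $\sum_m b_m=1$, and with $F(m\beta)=0$ for $1\leq m\leq k-1$. The Fej\'er kernel $K_k$ satisfies these conditions at $\beta=2\pi/k$, with strictly positive coefficients in degrees $0,\dots,k-1$. To perturb it, we take unknown coefficients in degrees $0,\dots,M$ and impose the $k$ linear conditions $F(0)=1$ and $F(m\beta)=0$ for $1\leq m<k$. Since $\cos(m\beta)=\operatorname{Re}\zeta^m\in\mathbb Q$, this is a rational linear system. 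At $\beta=2\pi/k$ its coefficient matrix restricted to degrees $0,\dots,k-1$ is invertible: it is essentially a discrete cosine transform, and $K_k$ is its unique solution there. By continuity, for $\beta$ close enough to $2\pi/k$ the system stays uniquely solvable in degrees $0,\dots,k-1$, with solution close to the Fej\'er coefficients and therefore still strictly positive and rational.

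This gives a polynomial of degree exactly $k-1$, whose support from~\eqref{eq:framesupport} would have $2k-1$ elements. To reach exactly $4k-3$ elements I would instead aim for degree exactly $2k-2$ with every coefficient positive. For example, take the solution above plus a small rational multiple $\varepsilon P$, where $P$ has degree $2k-2$ and vanishes at the required points. One such $P$ is the kernel of the homogeneous system in degrees $0,\dots,2k-2$ after fixing the top coefficient to be one. For small $\varepsilon>0$, all coefficients in degrees $0,\dots,k-1$ stay positive. The remaining worry is that the coefficients of $P$ in degrees $k,\dots,2k-2$ might fail to be positive. Since $P$ depends continuously on $\beta$, I would check this at $\beta=2\pi/k$ and transfer it by continuity. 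If this fails, the fallback is to use $K_k(t)^2$-type kernels, perturbed in the same way. Securing positivity of all coefficients in the top degrees while keeping exact vanishing is the step I expect to be the main obstacle.

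Once $F$ is fixed, the proof follows the construction in Section~\ref{sec:phaseconstruction}. Lemma~\ref{lem:frame} with $\Delta=1$ and $\alpha_j=j\beta$ gives a rational orthonormal orbit frame. Its initial vector has support $\{\pm m:0\leq m\leq 2k-2\}$, which has $4k-3$ elements, and it has return phase $+1$ because $b_0>0$. Lemma~\ref{lem:completion} turns the frame into coordinate vectors. Clearing denominators and applying Theorem~\ref{thm:selection} with $\ell=2$ then gives a connected oriented graph whose complete class is exactly the $k$ selected vertices. Formula~\eqref{eq:liftsupport} with $\ell=2$ would, however, enlarge the support to pairwise sums. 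To keep exactly $4k-3$ elements, I would absorb this: design the intermediate support as $\{\pm m:0\leq m\leq k-1\}$, which has $2k-1$ elements and is the degree-$(k-1)$ perturbed Fej\'er case above. Its $2$-fold sums are $\{-(2k-2),\dots,2k-2\}$, which has $4k-3$ elements. This removes the need for the degree-$(2k-2)$ positivity step entirely, so I would adopt it as the primary route.
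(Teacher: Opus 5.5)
Your overall route is the paper's: a perturbed Fej\'er kernel over $\mathbb Q(\ii)$ with angle $\beta$ near $2\pi/k$, then Lemma~\ref{lem:frame} with $\Delta=1$ and $\alpha_j=j\beta$, then Lemma~\ref{lem:completion}, then Theorem~\ref{thm:selection} with $\ell=2$, where~\eqref{eq:liftsupport} doubles $\{-(k-1),\dots,k-1\}$ to $4k-3$ points. Your irrationality and stabilizer arguments are also correct. However, your justification of the key positivity step is false. For $k\geq3$, the $k\times k$ system $F(0)=1$, $F(m\beta)=0$ $(1\le m<k)$ in degrees $0,\dots,k-1$ is \emph{singular} at $\beta=2\pi/k$. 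Since $\cos(2\pi nm/k)=\cos(2\pi n(k-m)/k)$, rows $m$ and $k-m$ coincide. Consequently $K_k$ is not the unique solution there: for $k=3$, the polynomial $\tfrac13+\tfrac23\cos t$ also satisfies every condition. You therefore cannot invoke continuity of the inverse matrix. Nothing in your argument shows that the solutions for nearby $\beta$ converge, or that they converge to the Fej\'er coefficients rather than to another member of the degenerate family.

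The missing idea is to identify the solution explicitly. A cosine polynomial of degree at most $k-1$ is a polynomial of degree at most $k-1$ in $\cos t$. When $\beta/2\pi$ is irrational, the nodes $\cos(m\beta)$ for $1\le m<k$ are distinct and different from $1$. So the unique solution is the Lagrange product $\prod_{j=1}^{k-1}(\cos t-\cos(j\beta))/(1-\cos(j\beta))$, which is~\eqref{eq:fejerperturb}. Its coefficients depend continuously on $\beta$ near $2\pi/k$, since the denominators do not vanish there. At $\beta=2\pi/k$ the product equals $K_k$: both are polynomials of degree $k-1$ in $\cos t$ with value $1$ at $t=0$ and the same roots counted with multiplicity, namely double roots at $\cos(2\pi j/k)$ for $1\le j<k/2$ and a simple root at $-1$ when $k$ is even. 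Strict positivity follows from continuity of this product, not of a matrix inverse. With this repair your proof coincides with the paper's. One smaller point: keep only the connected component of the source before applying Theorem~\ref{thm:selection}, which requires a connected input.
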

\begin{proof}
Put $\alpha_*=2\pi/k$ and define, for $\alpha$ near $\alpha_*$,
\begin{equation}\label{eq:fejerperturb}
 F_\alpha(t)=\prod_{j=1}^{k-1}
 \frac{\cos t-\cos(j\alpha)}{1-\cos(j\alpha)}.
\end{equation}
At $\alpha=\alpha_*$, the polynomial equals the normalized kernel $K_k$ in~\eqref{eq:normalizedfejer}. Indeed, as polynomials in $\cos t$, the product in~\eqref{eq:fejerperturb} and $K_k$ have the same degree and roots with multiplicities, and both equal one at $t=0$. All denominators in~\eqref{eq:fejerperturb} are nonzero at $\alpha_*$, and the coefficients in~\eqref{eq:normalizedfejer} are strictly positive. By continuity, both properties persist in a sufficiently small neighborhood of $\alpha_*$. Choose $\alpha$ in this neighborhood such that $e^{\ii\alpha}\in\mathbb Q(\ii)$ is not a root of unity. Such points are dense by the rational parametrization in the proof of Theorem~\ref{thm:rationalblocks}, with $\Delta=1$. Then $\alpha/(2\pi)$ is irrational, all $\cos(j\alpha)$ are rational, and~\eqref{eq:fejerperturb} has strictly positive rational cosine coefficients in every degree $0,\ldots,k-1$.

The product satisfies $F_\alpha(0)=1$ and $F_\alpha((j-\ell)\alpha)=0$ for $0\leq j,\ell<k$ with $j\ne\ell$. The numbers $j\alpha/(2\pi)$ are distinct modulo one because $\alpha/(2\pi)$ is irrational. Thus the polynomial satisfies the hypotheses of Lemma~\ref{lem:frame}, with $\Delta=1$ and angle parameters $\alpha_j=j\alpha$. Apply that lemma and Lemma~\ref{lem:completion}. The resulting rational skew-symmetric matrix has order at most $4k$. By~\eqref{eq:framesupport}, the eigenvalue support of the source for its Hermitian Hamiltonian is
\[
 \{-(k-1),\ldots,-1,0,1,\ldots,k-1\}.
\]
After clearing denominators, retain the connected component of the source and apply Theorem~\ref{thm:selection} with $\ell=2$, selecting exactly the $k$ prescribed coordinates. The eigenvalue-support formula~\eqref{eq:liftsupport} gives a positive rescaling of $\{-(2k-2),\ldots,2k-2\}$, which has $4k-3$ elements. Restriction to the source component does not change its eigenvalue support. Lemma~\ref{lem:labels} gives the same eigenvalue support at every vertex in the class. The differences between normalized transfer times remain irrational, since all transfer times and the minimum vertex period are divided by the same factor. Lemma~\ref{lem:stabilizer} excludes the stated switching automorphisms.
\end{proof}
For $k\geq3$, Corollary~\ref{thm:main} gives multiple state transfer without switching automorphisms relating distinct vertices of the class, answering the existence question in~\cite[Section~8]{GL}. These graphs have periodic vertices; global periodicity is not asserted. The number $4k-3$ is the size of the eigenvalue support in this construction. The final graph order can be much larger because of the denominators in the rational orthogonal change of coordinates.

\subsection{Simultaneous realization with prescribed periods}
Different complete classes can be joined without changing their prescribed transfers. Their periods, however, cannot be chosen independently. The same quadratic fields that restrict transfer times also determine the possible period ratios.

Fix an integer $m\geq1$. For $1\leq i\leq m$, let $\Lambda_i\subset\mathbb R/\mathbb Z$ be finite and contain zero, let $p_i>0$, and let $\varepsilon_i\in\{1,-1\}$. We seek classes with these normalized transfer times, minimum vertex periods proportional to $p_i$, and return phases $\varepsilon_i$. Put $\nu_i=1$ if $\varepsilon_i=1$, and $\nu_i=2$ if $\varepsilon_i=-1$, and use representatives in $[0,1)$.

\begin{theorem}\label{thm:simultaneous}
There is a connected oriented graph with pairwise disjoint complete PST classes $C_i=\T_X(a_i)$ at nonstationary periodic vertices, having normalized transfer-time sets $\Lambda_i$ relative to $a_i$, minimum vertex periods $T_i=\kappa p_i$ for one $\kappa>0$, and return phases $\varepsilon_i$, if and only if there are positive squarefree integers $\Delta_i$ such that
\begin{equation}\label{eq:jointperiodcondition}
 \begin{gathered}
 e^{2\pi\ii x/\nu_i}\in\mathbb Q(\sqrt{-\Delta_i})
 \qquad(x\in\Lambda_i),\\
 \frac{p_i\sqrt{\Delta_i}}{p_1\sqrt{\Delta_1}}\in\mathbb Q_{>0}
 \qquad(1\leq i\leq m).
 \end{gathered}
\end{equation}
\end{theorem}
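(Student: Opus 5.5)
Necessity comes from the arithmetic lemmas applied at each source $a_i$; sufficiency comes from a direct sum of the rational models built in the proof of Theorem~\ref{thm:schedules}, followed by one application of Theorem~\ref{thm:selection}.

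\emph{Necessity.} Fix $i$. Apply Lemma~\ref{lem:quadratic} at $a_i$ with $K=\mathbb Q(\ii)$, so $d=1$. This gives a gap generator $\delta_i=q_i\sqrt{\Delta_i'}$ with $q_i\in\mathbb Q_{>0}$, and $T_i=2\pi/\delta_i$. Lemma~\ref{lem:field} puts $e^{-2\pi\ii x}$ in $J_i=\mathbb Q(\sqrt{-\Delta_i'})$, where I put $\Delta_i=\Delta_i'$ (squarefree, with $\Delta_i=1$ when $\delta_i$ is rational). For $\varepsilon_i=-1$ the graph is oriented, so \eqref{eq:halftimefield} gives the half-angle statement. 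This is the first line of \eqref{eq:jointperiodcondition}; closure of $J_i$ under conjugation handles the sign of the exponent. For the second line, $T_i=\kappa p_i$ gives
\[
 \frac{p_i\sqrt{\Delta_i}}{p_1\sqrt{\Delta_1}}=\frac{T_i\sqrt{\Delta_i}}{T_1\sqrt{\Delta_1}}=\frac{q_1}{q_i}\in\mathbb Q_{>0}.
\]
There is a subtlety: the field from Lemma~\ref{lem:field} is $\mathbb Q(\sqrt{-\Delta})$ with $\Delta$ taken from the gap generator. In the oriented case the paper's $d=1$ indeed gives $J=\mathbb Q(\sqrt{-\Delta})$, so the same $\Delta_i$ serves both conditions.

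\emph{Sufficiency.} For each $i$, run the proof of Theorem~\ref{thm:schedules} with the given $\Delta_i$ and $\nu_i$. This produces a rational skew-symmetric matrix $A_i$ whose coordinates $0,\dots,k_i-1$ realize $\Lambda_i$ at a source of minimum period $P_i=2\pi/(\nu_i\sqrt{\Delta_i})$ with return phase $\varepsilon_i$. By the second line of \eqref{eq:jointperiodcondition}, $p_i/P_i=c_i\,p_1/P_1$ with $c_i\in\mathbb Q_{>0}$. Replace $A_i$ by $A_i/\lambda_i$ for a suitable $\lambda_i\in\mathbb Q_{>0}$; this scales time by $\lambda_i$, multiplies the period by $\lambda_i$, and leaves normalized times and phases unchanged. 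Choosing $\lambda_i$ appropriately makes the new periods exactly proportional to $p_i$.

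Form the direct sum $\bigoplus_i A_i$, restricting each summand to its source component, and clear denominators. This matrix is disconnected, so before applying Theorem~\ref{thm:selection} I need a connecting step.

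\textbf{Main obstacle: connecting the pieces without disturbing the classes.} I would first apply Theorem~\ref{thm:selection} separately to each component, with $\ell=3$ (odd, so phases are preserved), a common integer $r$, and selected sets $S_i$. Each resulting graph $Y_i$ contains large nonsingleton cells. The plan is to join the $Y_i$ by a connecting gadget attached at vertices whose amplitudes stay invariant. The cleanest device is to exploit the freedom in the cell construction: I would add an extra component whose amplitude from every selected vertex is identically zero. One way is a vertex $z$ carrying a zero-weight entry, but that is not allowed in an unweighted graph.

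The alternative I would actually pursue works at the rational level, before selection. Take $A=\bigoplus A_i\oplus O_1$, and conjugate by a rational orthogonal matrix acting only on coordinates outside the selected sets $\{0,\dots,k_i-1\}$. The goal is to mix the non-selected coordinates of the different blocks with the extra coordinate. A rational rotation (a Householder reflection, as in Lemma~\ref{lem:completion}) that fixes every selected $\e_j$ preserves all source orbits, all periods and all phases. Generically it also makes the underlying graph connected, because it couples coordinates of different blocks once each block has some non-selected coordinate. A block with no non-selected coordinate cannot arise, since a nonstationary orbit needs dimension exceeding $k_i$ in Lemma~\ref{lem:frame}, or can be padded by an $O_4$ summand.

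After this step, apply Theorem~\ref{thm:selection} once to the connected integer matrix, with $S=\bigcup_i S_i$ and $\ell=\nu+1$ taken to be $3$. Equation~\eqref{eq:terminal} shows that distinct blocks never transfer to each other, since their entries vanish, and \eqref{eq:leakage} excludes all other targets. Every period is divided by the same $Dr$, so the $T_i$ stay proportional to $p_i$ and the phases $\varepsilon_i^3=\varepsilon_i$ are preserved. The step I expect to need the most care is verifying that a suitable rational reflection connects everything; if needed, I would choose it by taking $\mathbf v$ with nonzero entries on exactly one non-selected coordinate of each block.
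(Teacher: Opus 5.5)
Your necessity argument is the paper's. Your sufficiency argument has the same skeleton as the paper's: the rational models from the proof of Theorem~\ref{thm:schedules}, a rational rescaling that makes the periods proportional to $p_i$, a rational orthogonal conjugation fixing all distinguished coordinates to connect the blocks, and Theorem~\ref{thm:selection} with $\ell=3$. The difference is the joining step.

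\emph{How the joining step differs.} The paper first passes to the Cartesian cube of each rescaled model and distinguishes the constant tuples. This creates nondistinguished coordinates for any connected model of order $n_i\geq2$. It then joins blocks two at a time by the rotation with entries $\pm3/5,\pm4/5$, whose entries are visibly nonzero, and the selection step yields ninth powers. You skip the cube and use one Householder reflection with $\mathbf v=\sum_l\beta_l\e_{c_l}$. This gives a smaller intermediate matrix and cubes instead of ninth powers. However, it depends on the internal structure of the models and on a genericity choice.

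\emph{First point to repair: existence of a non-selected coordinate.} The principle you invoke, that a nonstationary orbit needs a component larger than its class, is false in general. A single arc realizes $\{0,1/2\}$ with return phase $-1$ on a component consisting of exactly the two class vertices. Padding by $O_4$ does not help. A padded coordinate is isolated in its block, so mixing it with other blocks leaves the prescribed vertices of that block in their own component. The claim does hold for the paper's models, for the following reason. The source component contains the cyclic subspace of the source, of dimension $|\Phi_0|$. The kernel of Lemma~\ref{lem:positive} has positive coefficients at every subset sum $s$ of the $n_{j\ell}$ and at $s+1$; for $\nu=2$ this gives positive coefficients at the odd frequencies $2s+1$ and $2s+3$. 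Hence $|\Phi_0|\geq k_i(k_i-1)+3>k_i$.

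\emph{Second point to repair: the choice of reflection.} You must choose $\beta_l\neq0$ with $\|\mathbf v\|^2\neq2\beta_l^2$ for every $l$. If a diagonal entry of the reflection on the chosen coordinates vanishes, it can merely relabel coordinates between blocks. For example, $\mathbf v=\e_{c_1}+\e_{c_2}$ sends $\e_{c_1}$ to $-\e_{c_2}$ and leaves the graph disconnected. When all entries of the reflection on $\operatorname{span}\{\e_{c_l}\}$ are nonzero, each new entry $A'_{x,c_j}=A_{x,c_l}R_{c_l,c_j}$ is a single product, so there is no cancellation. Old edges survive, and every neighbour of $c_l$ becomes adjacent to every $c_j$, which gives connectivity. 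The extra $O_1$ summand is unnecessary.
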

\begin{proof}
Suppose such a realization exists. By Lemma~\ref{lem:quadratic}, write the gap generator at $a_i$ as $\delta_i=q_i\sqrt{\Delta_i}$ with $q_i\in\mathbb Q_{>0}$ and $\Delta_i$ positive and squarefree, allowing $\Delta_i=1$. Lemma~\ref{lem:field}, including~\eqref{eq:halftimefield} when $\varepsilon_i=-1$, gives the field condition. Also $T_i=2\pi/(q_i\sqrt{\Delta_i})=\kappa p_i$, so
\[
 \frac{p_i\sqrt{\Delta_i}}{p_1\sqrt{\Delta_1}}=\frac{q_1}{q_i}\in\mathbb Q_{>0}.
\]
This proves necessity.

Conversely, assume~\eqref{eq:jointperiodcondition} and put $b=p_1\sqrt{\Delta_1}$. Use the field $\mathbb Q(\sqrt{-\Delta_i})$ supplied by~\eqref{eq:jointperiodcondition} in the construction from the proof of Theorem~\ref{thm:schedules}. The rational model in~\eqref{eq:modelperiod}, before clearing denominators or selecting targets, has minimum vertex period $2\pi/(\nu_i\sqrt{\Delta_i})$ and return phase $\varepsilon_i$. Retain its source component, and multiply its matrix by the positive rational number
\[
 \frac{b}{\nu_i p_i\sqrt{\Delta_i}}.
\]
Call the resulting matrix $A_i$. Its distinguished vertices have minimum vertex period $2\pi p_i/b$, return phase $\varepsilon_i$, and the prescribed normalized transfer times.

Take the Cartesian cube of each $A_i$ and distinguish the constant tuples corresponding to its prescribed vertices. If $A_i$ has order $n_i$, then $n_i\geq2$, and at most $n_i$ of the $n_i^3$ coordinates are distinguished. Each block is connected and has nondistinguished coordinates. These coordinates allow the models to be joined while fixing every prescribed vertex. The distinguished transition entries in each block are cubes of the corresponding entries of $e^{tA_i}$, so both the times and their signs are preserved.

Join these blocks by rational orthogonal conjugations fixing all distinguished coordinates. For two disjoint connected blocks, choose one nondistinguished coordinate in each and rotate their plane by
\[
 \begin{pmatrix}3/5&-4/5\\4/5&3/5\end{pmatrix}.
\]
Each old incident edge survives with coefficient $3/5$ and gives a cross edge with coefficient $4/5$ or $-4/5$. There is no cancellation because the blocks were disjoint. Each chosen coordinate has a neighbor, so the new edges join the blocks while the old edges keep them connected. Iterating yields a connected rational skew-symmetric matrix. Conjugation fixes all transition entries between distinguished coordinates.

Clear denominators by an integer $D>0$ and apply Theorem~\ref{thm:selection} with $\ell=3$, selecting precisely those distinguished coordinates. The first cube supplied coordinates for joining; the cube in the selection theorem removes unwanted targets. For a final transfer within the $i$th prescribed class, the transition entry is therefore the ninth power of the corresponding entry of $e^{DrtA_i}$. Entries between different prescribed classes are zero. Hence all transfer and return times are divided by $Dr$, their signs are preserved, and~\eqref{eq:leakage} excludes every additional target. The minimum vertex periods are
\[
 T_i=\frac{2\pi}{bDr}\,p_i,
\]
as required.
\end{proof}

As in the proof of Theorem~\ref{thm:schedules}, diagonal sign conjugation permits independent prescriptions of the source-to-target signs in each class, at the chosen representatives of the normalized transfer times.

If return phases are not prescribed, the condition in~\eqref{eq:jointperiodcondition} with every $\nu_i=1$ is necessary for a mixed-graph realization and sufficient for an oriented realization. In particular, any finite family of admissible sets can be realized in one connected oriented graph when their periods are not specified: choose a field for each set and take $p_i=1/\sqrt{\Delta_i}$.

For example, consider two complete classes with normalized transfer times $\{0,1/4\}$ and $\{0,1/3\}$. Their fields are forced to be $\mathbb Q(\ii)$ and $\mathbb Q(\sqrt{-3})$, respectively. Their minimum vertex periods can therefore coexist precisely with ratios
\[
 T_2/T_1\in\frac1{\sqrt3}\mathbb Q_{>0},
\]
when a common scaling is allowed. In particular, equal or rationally related periods are impossible, even though both classes can occur in one connected graph.

\section{Explicit constructions}\label{sec:examples}
The following constructions illustrate the classification and target-selection results. The first uses the cell construction to realize specified complete classes. The second gives a globally periodic example with different first transfer times in the two directions.
\subsection{Complete classes of sizes five and six}\label{sec:six}
We start with an eight-vertex weighted model whose evolution cycles through six coordinate states. The cell construction then keeps either all six targets or only five of them.

Let $P_6\e_j=\e_{j+1\bmod6}$ for $0\le j<6$, let $\varepsilon=(1,-1,1,-1,1,-1)^{\mathsf T}$, and put
\begin{equation}\label{eq:sixseed}
 A_6=3(P_6-P_6^{\mathsf T})-(P_6^2-(P_6^{\mathsf T})^2),\qquad
 A_8=\begin{pmatrix}A_6&3\varepsilon\one_2^{\mathsf T}\\-3\one_2\varepsilon^{\mathsf T}&O_2\end{pmatrix}.
\end{equation}
The matrix $A_8$ is the skew adjacency matrix of an integer-weighted oriented graph of order eight. Its edge weights are one and three. We use it to construct the unweighted graphs below.

\begin{proposition}\label{prop:sixclock}
The first six vertices form a complete PST class for $H=\ii A_8$. Each has minimum vertex period $T_0=\pi/\sqrt3$, and, with $\tau_0=T_0/6$,
\begin{equation}\label{eq:sixexp}
 e^{\tau_0 A_8}=\diag\left(P_6,\begin{pmatrix}0&-1\\-1&0\end{pmatrix}\right).
\end{equation}
The minimum global period is also $T_0$.
\end{proposition}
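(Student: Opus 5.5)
The plan is to diagonalize $A_8$ explicitly: it is essentially a circulant on the first six coordinates, coupled to the last two through a single Fourier mode. I will first verify the matrix identity~\eqref{eq:sixexp} on an orthonormal eigenbasis. Then I will read off the eigenvalue supports, and from them the periods and the complete class.

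\emph{Step 1 (spectrum of $A_6$).} Use the Fourier eigenvectors $\mathbf f_k$ of $P_6$, with $P_6\mathbf f_k=\zeta^k\mathbf f_k$ and $\zeta=e^{\pi\ii/3}$. On $\mathbf f_k$, $A_6$ acts by
\[
 3(\zeta^k-\zeta^{-k})-(\zeta^{2k}-\zeta^{-2k})=2\ii\bigl(3\sin(\pi k/3)-\sin(2\pi k/3)\bigr).
\]
This gives the eigenvalues $0,\,2\sqrt3\,\ii,\,4\sqrt3\,\ii,\,0,\,-4\sqrt3\,\ii,\,-2\sqrt3\,\ii$ for $k=0,\dots,5$. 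The modes $k=0$ and $k=3$ are $\one_6$ and $\varepsilon$.

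\emph{Step 2 (the coupling).} Since $\varepsilon$ is orthogonal to every other Fourier mode, the border $3\varepsilon\one_2^{\mathsf T}$ couples only the unit vector $\mathbf e=\varepsilon/\sqrt6$ to $\mathbf g=(0,\dots,0,1,1)^{\mathsf T}/\sqrt2$. A direct computation gives $A_8\mathbf e=-6\sqrt3\,\mathbf g$ and $A_8\mathbf g=6\sqrt3\,\mathbf e$. Hence on this plane $e^{tA_8}$ is a rotation at rate $6\sqrt3$. The vectors $\one_6$ and $(0,\dots,0,1,-1)^{\mathsf T}$ lie in the kernel of $A_8$.

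\emph{Step 3 (the identity at $\tau_0=\pi/(6\sqrt3)$).} On $\mathbf f_k$ with $k\in\{1,2,4,5\}$, $e^{\tau_0A_8}$ multiplies by $e^{\ii\pi k/3}=\zeta^k$, exactly as $P_6$ does. On $\one_6$ it is the identity, agreeing with $P_6\one_6=\one_6$. On the $\mathbf e$--$\mathbf g$ plane the rotation angle is $\pi$, so $e^{\tau_0A_8}=-I$ there. This matches $P_6\varepsilon=-\varepsilon$ and the $2\times2$ block sending $(1,1)\mapsto-(1,1)$. On $(1,-1)$ that block is the identity, as is $e^{\tau_0A_8}$. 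These vectors form a basis of $\mathbb C^8$, so~\eqref{eq:sixexp} follows.

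\emph{Step 4 (periods and class).} Equation~\eqref{eq:sixexp} gives $e^{6\tau_0A_8}=I$, so $T_0$ is a global period. The support of $\e_0$ for $H$ contains $\pm2\sqrt3,\pm4\sqrt3$, and also $0$ and $\pm6\sqrt3$. The gaps therefore generate $2\sqrt3\,\mathbb Z$. By the return-time criterion of Section~\ref{sec:prelim}, the minimum vertex period at vertex $0$ is $2\pi/(2\sqrt3)=T_0$. By~\eqref{eq:sixexp} the six vertices reach one another at multiples of $\tau_0$, so they lie in one class. Lemma~\ref{lem:labels} then gives the same period $T_0$ at each of them. Any global period is a return time at vertex $0$, so the minimum global period is exactly $T_0$.

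To exclude vertices $6$ and $7$, I use the equality of eigenvalue supports from Lemma~\ref{lem:labels}. The vectors $\e_6$ and $\e_7$ lie in the span of $\mathbf g$, $\mathbf e$ and $(1,-1)$. Their supports are therefore contained in $\{0,\pm6\sqrt3\}$ and miss $2\sqrt3$, so neither can be a target.

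The main obstacle is bookkeeping. One must check that the border couples only the $\varepsilon$-mode, and that the resulting rotation rate $6\sqrt3$ gives angle exactly $\pi$ at $\tau_0$. This is the point where the weight $3$ in $A_8$ is essential.
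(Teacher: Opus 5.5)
Your proof is correct and follows essentially the same route as the paper: Fourier diagonalization of $A_6$, the observation that the border couples only $\varepsilon/\sqrt6$ to $\one_2/\sqrt2$ with rotation rate $6\sqrt3$, verification of~\eqref{eq:sixexp} on an orthogonal eigenbasis, and reading the vertex and global periods off the support $\{0,\pm2\sqrt3,\pm4\sqrt3,\pm6\sqrt3\}$ of $\e_0$. The only difference is how you exclude vertices $6$ and $7$. You use equality of eigenvalue supports within a class (Lemma~\ref{lem:labels}), since their supports lie in $\{0,\pm6\sqrt3\}$. The paper instead computes the return amplitude $f_6$ and observes that its only zeros in $[0,T_0)$ are at $jT_0/6$, where the state is already $\e_j$ with $j<6$. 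Your version is slightly shorter and equally rigorous.
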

\begin{proof}
Use the Fourier basis of the cyclic group of order six; see~\cite[Chapter~7, \S1]{SS}. On a Fourier vector for $P_6$ with eigenvalue $e^{\ii\theta}$, the corresponding eigenvalue of $A_6$ is
\[
 \ii(6\sin\theta-2\sin2\theta).
\]
For $\theta=0,\pi/3,2\pi/3,\pi,4\pi/3,5\pi/3$, these frequencies are respectively
\[
 0,\ 2\sqrt3,\ 4\sqrt3,\ 0,\ -4\sqrt3,\ -2\sqrt3.
\]
The added columns couple only the alternating Fourier vector $\varepsilon/\sqrt6$ to the uniform auxiliary vector $\one_2/\sqrt2$. The skew block on their span has frequency $6\sqrt3$. The auxiliary difference vector is a zero eigenvector. At time $\tau_0$, the nonalternating Fourier vectors acquire exactly their eigenvalues under $P_6$, while the coupled two-dimensional block is $-I$. The auxiliary difference vector stays fixed. This proves~\eqref{eq:sixexp}. The spectrum of $A_8$ is
\[
 0\ \text{(twice)},\quad \pm2\ii\sqrt3,\quad
 \pm4\ii\sqrt3,\quad\pm6\ii\sqrt3.
\]
The zero eigenvalue and frequency $2\sqrt3$ show that the minimum global period is $T_0$.

At any of the first six vertices the return amplitude, with $\vartheta=2\sqrt3t$, is
\begin{equation}\label{eq:sixreturn}
 f_6(t)=\frac{1+2\cos\vartheta+2\cos2\vartheta+\cos3\vartheta}{6}
     =\frac{(1+\cos\vartheta)(4\cos^2\vartheta-1)}6.
\end{equation}
The positive spectral weights at zero and frequency $2\sqrt3$ give minimum vertex period $T_0$. In $[0,T_0)$ the only zeros of~\eqref{eq:sixreturn} are $jT_0/6$, $1\leq j\leq5$. At these times, the corresponding integer powers of~\eqref{eq:sixexp} give the other five vertices in the class as the respective targets. Any transfer to a distinct vertex must have zero return amplitude, so no other target is possible. 
\end{proof}

\begin{corollary}\label{cor:unweightedsix}
There are connected oriented graphs with complete PST classes having the following parameters. The periods listed are minimum vertex periods.
\begin{center}
\begin{tabular}{@{}cccc@{}}
\toprule
\textup{Graph order}&\textup{Class size}&\textup{Period}&\textup{Normalized transfer times}\\
\midrule
$400$&$6$&$\pi/\sqrt3$&$0,1/6,2/6,3/6,4/6,5/6$\\
$1315$&$5$&$\pi/(2\sqrt3)$&$0,1/6,2/6,3/6,4/6$\\
\bottomrule
\end{tabular}
\end{center}
\end{corollary}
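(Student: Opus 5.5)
We derive both rows from Proposition~\ref{prop:sixclock} via the cell construction of Lemma~\ref{lem:cells}, applied directly to the weighted graph $B=\ii A_8$ rather than to a Cartesian power. This keeps the orders small. The graph has eight vertices with edge weights $1$ and $3$: weight~$3$ on the $6$-cycle edges and on all twelve edges to the two auxiliary vertices, weight~$1$ on the distance-two chords. The plan is to choose $r$ and cell sizes $s_v$ so that~\eqref{eq:cellconditions} holds, then read off the class from~\eqref{eq:cellamplitude}, and finally check the orders against the table.

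\emph{Six-vertex class.} Take $r=1$, with $s_j=1$ for the six cycle vertices and $s=3$ for both auxiliary vertices. The cycle and chord edges have singleton ends, so $d=w$, which is fine since the singleton cells have no internal structure and the blocks are single entries. For an auxiliary edge of weight $3$ we have $d_{uv}=3\cdot 3/1=9=s_v^2$, so the block is complete, and $d_{vu}=3\cdot1/3=1$; the capacity condition $3\le 1\cdot3$ also holds.

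The resulting graph has order $6+9+9=24$, not $400$. So the paper evidently uses a different scheme, presumably Theorem~\ref{thm:selection} with $\ell=2$. Since $r$ must be divisible by $L_c$ and we need $r\ge3=w_{\max}$, a choice such as $r=3$ gives order $|S|+r^2\sum c_v^2$, which should produce $400$. I would compute $\sum_{v\notin\mathcal D}c_v^2$ for $8^2-6=58$ off-diagonal pairs. Pairs adjacent to a diagonal vertex get $c_v$ equal to an edge weight. If the sum is $394/9$, then this $r$ is wrong, so I would instead search for the combination of $r$ and $\ell$ that yields $400$; Appendix~\ref{app:data} presumably records it. Whatever scheme is chosen, \eqref{eq:terminal} gives return times divided by $r$ and phases raised to the power $\ell$. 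For the period to remain $\pi/\sqrt3$ we need an overall factor of one, so some rescaling is needed: either the construction uses $A_8/r$ implicitly, or the listed period is read as unscaled. I would state whichever normalization the construction gives.

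\emph{Five-vertex class.} Select the five vertices $S=\{0,\dots,4\}$ and make vertex $5$ a nonsingleton cell. Then \eqref{eq:cellamplitude}, or \eqref{eq:leakage}, excludes vertex $5$ as a target, while the transfers $0\to j$ at $jT_0/6$ for $j\le4$ survive. By Lemma~\ref{lem:labels}, the normalized times are then $0,\dots,4/6$. The listed period $\pi/(2\sqrt3)=T_0/2$ indicates a time rescaling by $r=2$, so I would use Lemma~\ref{lem:cells} with $r=2$. One must choose sizes satisfying the integrality conditions for the weights $1$ and $3$ and check that the order equals $1315$. Nonstationarity and periodicity follow because return times are divided by $r$.

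The main obstacle is purely arithmetic: finding cell sizes that meet \eqref{eq:cellconditions} and reproduce the exact orders $400$ and $1315$. I would solve for these by direct search over $r$ and the $s_v$ (for instance $s_v\in\{1,r,3r\}$), checking connectivity via the complete blocks next to the singleton cells.
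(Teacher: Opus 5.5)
There is a genuine gap: the one explicit construction you give is invalid, and the constructions that would actually yield the table are left as an unperformed search.

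\textbf{Why the direct construction on $A_8$ fails.} In Lemma~\ref{lem:cells}, the block between two singleton cells is a $1\times1$ biregular graph. Hence \eqref{eq:cellconditions} forces $rw_{uv}\le s_us_v=1$. Your claim that $d=w$ is harmless between singletons is therefore false, since an unweighted graph cannot carry a weight-three entry. In $A_8$ the six class vertices are mutually joined by cycle edges of weight $3$ and by chords of weight $1$. So they can never all be singleton cells when $r=1$. When $r=2$, no two singleton cells may be adjacent at all. This also rules out your five-vertex plan on $A_8$, because vertices $0,\dots,4$ are joined by edges of both weights. Consequently the $24$-vertex graph does not exist, and the mismatch with $400$ has nothing to do with how the period is normalized.

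\textbf{The missing idea.} First pass to the Cartesian square $A_\square=A_8\otimes I_8+I_8\otimes A_8$. By tensor factorization, its class at $(0,0)$ is $\{(j,j):0\le j<6\}$ with the same period $T_0$, and these diagonal vertices are pairwise nonadjacent. Then choose the cell sizes by hand rather than by the generic recipe of Theorem~\ref{thm:selection}.

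For the six-vertex class, take $r=1$ and set $s_{ab}=1$ when $a,b<6$ with $a-b$ even, or when $a,b\in\{6,7\}$; set $s_{ab}=3$ otherwise. This is the array \eqref{eq:sizes6}. A weight-$3$ edge either changes the parity of $a-b$ or involves an auxiliary coordinate, so it always meets a cell of size $3$. A weight-$1$ chord always joins cells of equal size. Thus \eqref{eq:cellconditions} holds, the order is $22+42\cdot9=400$, and since $r=1$ the period stays $\pi/\sqrt3$. The extra singleton cells are harmless because they are not quotient targets of $(0,0)$.

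For the five-vertex row, take $r=2$ with the array \eqref{eq:sizes5}. There only the five diagonal class cells are singletons, and the order is $1315$.

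\textbf{Why the fallback cannot work.} Your fallback to Theorem~\ref{thm:selection} cannot reproduce the table. There $w_{\max}=L_c=3$ forces $3\mid r$, and one computes $\sum c_v^2=346$. So the order is $6+346r^2\ge 3120$, and the period is divided by $r\ge3$. Your ``$394/9$'' test is meaningless, since $\sum c_v^2$ is an integer.
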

\begin{proof}
Let $A_\square=A_8\otimes I_8+I_8\otimes A_8$, and index its cells by $(a,b)$ with $0\leq a,b<8$. The complete PST class of $(0,0)$ in $\ii A_\square$ is exactly $\{(j,j):0\leq j<6\}$, since a tensor-product column is a coordinate state exactly when both factors are coordinate states. The same tensor factorization shows that the minimum vertex period at $(0,0)$ is $T_0$.

Apply Lemma~\ref{lem:cells} with $r=1$ and the square-root cell sizes $s_{ab}$ in~\eqref{eq:sizes6}, or with $r=2$ and those in~\eqref{eq:sizes5}. Appendix~\ref{app:data} checks every required cell condition and the respective orders $400$ and $1315$. Among the six quotient targets, precisely six and five cells are singletons. The lemma therefore gives the asserted complete PST classes, periods, and normalized transfer times. The additional singleton cells in the first construction are not quotient targets.

\end{proof}

The five-point set of normalized transfer times is a six-point grid with one point removed, so its translation stabilizer is trivial. Lemma~\ref{lem:stabilizer} therefore forces every switching automorphism preserving the five-vertex class to fix its vertices individually.

The orders in Corollary~\ref{cor:unweightedsix} are upper bounds for the minimum orders of such graphs. The construction does not establish global periodicity of either lifted graph.

\subsection{A globally periodic graph with a class of size two}\label{sec:nine}
We now construct a globally periodic graph with a complete PST class of size two and first normalized transfer time $1/3$. Start with a graph whose evolution cycles through three coordinate states. An orthogonal conjugation will fix two of those states and spread the third over four coordinates. We then use the return amplitude to exclude every other possible target.

Let
\[
 A_3=\begin{pmatrix}0&-1&1\\1&0&-1\\-1&1&0\end{pmatrix},\qquad
 A_{\mathrm{prod}}=A_3\otimes I_3+I_3\otimes A_3.
\]
Index the vertices of $A_{\mathrm{prod}}$ by $0,\ldots,8$, with $3a+b$ corresponding to $(a,b)$. Put $\mathcal I=\{1,2,7,8\}$, in this order, and $\xi=(1,-1,-1,-1)^\mathsf T$. Define the real symmetric matrix $R_Z$ by letting it be the identity outside $\mathcal I$, with zero cross blocks, and setting
\[
 R_Z[\mathcal I,\mathcal I]=\frac12\xi\xi^{\mathsf T}-I_4.
\]
Since $\xi^{\mathsf T}\xi=4$, we have $R_Z^2=I$. Set $A_Z=R_ZA_{\mathrm{prod}}R_Z$. Direct multiplication gives
\[
\renewcommand{\arraystretch}{1.04}
 A_Z=\begin{pmatrix}
0&0&0&-1&0&0&1&1&1\\
0&0&-1&0&0&0&0&1&0\\
0&1&0&0&1&1&0&0&1\\
1&0&0&0&-1&1&-1&0&0\\
0&0&-1&1&0&-1&0&0&-1\\
0&0&-1&-1&1&0&0&0&1\\
-1&0&0&1&0&0&0&1&-1\\
-1&-1&0&0&0&0&-1&0&-1\\
-1&0&-1&0&1&-1&1&1&0
\end{pmatrix}.
\]
All entries of $A_Z$ lie in $\{0,1,-1\}$, so it is the skew adjacency matrix of an oriented graph $Z$. Its degrees, in order, are $(4,2,4,4,4,4,4,4,6)$. The underlying graph is connected: for example the edges $03,06,07,08,71,12,24,25$ form a spanning tree. Conjugations by reflections are standard in graph switching, including the mixed and gain settings~\cite{BBCD}; the particular matrix above is verified explicitly and does not require any general switching criterion.

For this graph, write $H_Z=\ii A_Z$ and put
\[
 T=\frac{2\pi}{\sqrt3},\qquad \tau=\frac T3.
\]

\begin{theorem}\label{thm:nine}
The graph $Z$ has minimum global period $T$. Its complete PST class at $0$ is $\{0,4\}$, with minimum vertex period $T$. The first transfer times from $0$ to $4$ and from $4$ to $0$ are $\tau=T/3$ and $2\tau=2T/3$, respectively.
\end{theorem}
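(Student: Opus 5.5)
The plan is to work entirely through the conjugation $A_Z=R_ZA_{\mathrm{prod}}R_Z$. Since $R_Z$ is real symmetric with $R_Z^2=I$, it is orthogonal, and
\[
 e^{tA_Z}=R_Z\,e^{tA_{\mathrm{prod}}}\,R_Z .
\]
Every spectral statement about $Z$ therefore reduces to one about $A_{\mathrm{prod}}$, and every transfer statement reduces to tracking how $R_Z$ acts on the relevant columns. Because $R_Z$ is the identity outside $\mathcal I=\{1,2,7,8\}$, it fixes $\e_0,\e_3,\e_4,\e_5,\e_6$.

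\textbf{Step 1 (the factor $A_3$).} Write $A_3=P_3-P_3^{\mathsf T}$ with $P_3\e_j=\e_{j+1\bmod3}$; the first column of $A_3$ is $(0,1,-1)^{\mathsf T}$, which matches this. On the Fourier vector for $P_3$ with eigenvalue $e^{\ii\theta}$, the matrix $A_3$ has eigenvalue $2\ii\sin\theta$, giving frequencies $0,\pm\sqrt3$. At $t=T/3=2\pi/(3\sqrt3)$ each Fourier vector therefore acquires exactly its $P_3$-eigenvalue, so
\[
 e^{(T/3)A_3}=P_3 .
\]
In particular $e^{TA_3}=I$.

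\textbf{Step 2 (the product and global period).} By the Cartesian-product factorization used in Theorem~\ref{thm:selection}, $e^{tA_{\mathrm{prod}}}=e^{tA_3}\otimes e^{tA_3}$. Hence $e^{\tau A_{\mathrm{prod}}}=P_3\otimes P_3$, which sends $(a,b)\mapsto(a+1,b+1)$, and $e^{TA_{\mathrm{prod}}}=I$. The spectrum of $A_{\mathrm{prod}}$ consists of sums of two frequencies from $\{0,\pm\sqrt3\}$ (times $\ii$). It contains both $0$ and $\sqrt3$, so every global period is a multiple of $2\pi/\sqrt3=T$. Conjugation by $R_Z$ preserves the spectrum, so the minimum global period of $Z$ is $T$.

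\textbf{Step 3 (vertex period at $0$).} The return amplitude at vertex $0=(0,0)$ is
\[
 U_Z(t)_{0,0}=f(t)^2,\qquad f(t)=\frac{1+2\cos(\sqrt3 t)}{3},
\]
since $R_Z$ fixes $\e_0$ and the tensor factorization applies. The eigenvalue support of $0$ contains $0$ and $\pm\sqrt3$ with positive weight, so the minimum vertex period is exactly $T$.

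\textbf{Step 4 (the complete class).} Any transfer from $0$ to a distinct vertex forces $U_Z(t)_{0,0}=0$, that is $\cos(\sqrt3 t)=-1/2$. Modulo $T$ this leaves only $t\equiv\tau$ or $t\equiv2\tau$.
\begin{itemize}
\item At $\tau$: $U_Z(\tau)\e_0=R_Z(P_3\otimes P_3)\e_0=R_Z\e_4=\e_4$, since $4=(1,1)\notin\mathcal I$. So PST occurs from $0$ to $4$ at $\tau$ with phase $+1$.
\item At $2\tau$: $U_Z(2\tau)\e_0=R_Z\e_8$. This vector is the last column of $\frac12\xi\xi^{\mathsf T}-I_4$ placed on $\mathcal I$, namely $(-\tfrac12,\tfrac12,\tfrac12,-\tfrac12)$ on $(1,2,7,8)$. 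It has four entries of modulus $1/2$, so it is not a coordinate state.
\end{itemize}
Hence $\T_Z(0)=\{0,4\}$, with normalized transfer time $x_4=1/3$.

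Lemma~\ref{lem:labels} then gives the reverse direction: transfers from $4$ to $0$ occur exactly at $t/T\equiv-1/3$, so the first one is at $2\tau=2T/3$. As a sanity check, $R_Z(P_3\otimes P_3)^2\e_4=R_Z\e_0=\e_0$.

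The only real obstacle is bookkeeping. The indexing $3a+b$ and the identification of $A_3$ with $P_3-P_3^{\mathsf T}$ must be consistent, so that $\e_0\mapsto\e_4\mapsto\e_8$ is the correct orbit and the first direction is $T/3$ rather than $2T/3$. It also has to be confirmed that $R_Z$ fixes $\e_0$ and $\e_4$ but spreads $\e_8$. I would check these first by multiplying out the first column of $A_3$ and the relevant column of $R_Z$ directly.
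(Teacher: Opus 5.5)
Your proposal is correct and follows essentially the same route as the paper. The shared steps are: conjugation by the orthogonal involution $R_Z$, which fixes $\e_0$ and $\e_4$; the cyclic permutation $e^{\tau A_{\mathrm{prod}}}=P_3\otimes P_3$; the spectrum $\{0,\pm\sqrt3,\pm2\sqrt3\}$ for the global period; the return amplitude $f_3(t)^2$, whose only zeros in $[0,T)$ are $\tau$ and $2\tau$; and Lemma~\ref{lem:labels} for the reverse direction. The one difference is that you get the minimum vertex period from the eigenvalue support of $0$ rather than from the bound $-1/3\le f_3(t)\le 1$; this is equally valid given $U_Z(T)=I$.
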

\begin{proof}
The eigenvalues of $A_3$ are $0,\pm\ii\sqrt3$, and
\[
 \bigl(e^{tA_3}\bigr)_{j,j}
    =f_3(t):=\frac{1+2\cos(\sqrt3t)}3.
\]
At $\tau=2\pi/(3\sqrt3)$, $e^{\tau A_3}$ is the cyclic permutation taking $0$ to $1$, $1$ to $2$, and $2$ to $0$. Therefore $e^{\tau A_{\mathrm{prod}}}$ sends $0$ to $4$, $4$ to $8$, and $8$ to $0$. Since $R_Z\e_0=\e_0$ and $R_Z\e_4=\e_4$, the identity $e^{tA_Z}=R_Ze^{tA_{\mathrm{prod}}}R_Z$ gives
\begin{align}
 U_Z(\tau)\e_0&=\e_4,\label{eq:forward9}\\
 U_Z(\tau)\e_4&=R_Z\e_8
   =\frac12(-\e_1+\e_2+\e_7-\e_8),\label{eq:diffuse9}\\
 U_Z(2\tau)\e_4&=\e_0.\label{eq:reverse9}
\end{align}
It also gives $U_Z(2\tau)\e_0=R_Z\e_8$.

Orthogonal conjugation preserves the spectrum. The distinct eigenvalues of $H_Z$ are
\[
 0,\quad\pm\sqrt3,\quad\pm2\sqrt3,
\]
with multiplicities $3,2,2,1,1$, respectively, so $U_Z(T)=I$. The simultaneous presence of $0$ and $\sqrt3$ implies that every global period is an integer multiple of $T$.

The return amplitude at $0$ is exactly
\begin{equation}\label{eq:return9}
 U_Z(t)_{0,0}=f_3(t)^2.
\end{equation}
Since $-1/3\leq f_3(t)\leq1$, we have $|U_Z(t)_{0,0}|=1$ precisely when $\cos(\sqrt3t)=1$. Thus the minimum vertex period is also $T$. For a transfer from $0$ to a distinct vertex the return amplitude must vanish. In $[0,T)$ the only zeros of~\eqref{eq:return9} are $\tau$ and $2\tau$. At the first time the state is $\e_4$; at the second it is the four-term state $R_Z\e_8$. There are no other PST targets, and the first transfer from $0$ to $4$ occurs at $T/3$. Lemma~\ref{lem:labels} then gives all transfer times in the reverse direction as $t/T\equiv2/3\pmod1$, so the first reverse transfer occurs at $2T/3$.

\end{proof}

Equation~\eqref{eq:diffuse9} shows that the reverse amplitude at $T/3$ is zero. The set of normalized transfer times $\{0,1/3\}$ has trivial translation stabilizer, so Lemma~\ref{lem:stabilizer} excludes a switching automorphism mapping $0$ to $4$. Theorem~\ref{thm:nine} therefore gives a negative answer to~\cite[Problem 35]{SL}: the first normalized transfer time is rational, but reverse transfer does not occur at that time, even under global periodicity.

\section{Concluding remarks and open questions}\label{sec:discussion}
Theorem~\ref{thm:schedules} classifies normalized transfer times of complete PST classes at nonstationary periodic vertices in mixed graphs: the condition $\cos(2\pi(x-y))\in\mathbb Q$ for all $x,y\in\Lambda$ is necessary and sufficient, and every admissible set has a connected oriented realization. The same construction determines the possible return phases in oriented realizations and selects exactly the required targets. Its consequences include arbitrary finite class sizes, sharp rational-time partition bounds, and simultaneous realization with prescribed relative minimum vertex periods and return phases. The realizations of admissible sets guarantee periodicity at the prescribed vertices. Further requirements on the realizing graph lead to the following questions.

\begin{question}\label{prob:global}
Can complete PST classes have arbitrarily large size in connected globally periodic mixed graphs?
\end{question}

The construction in the proof of Theorem~\ref{thm:schedules} establishes periodicity at the vertices of the prescribed class, but it does not establish global periodicity. The nine-vertex graph in Theorem~\ref{thm:nine} shows that a rational first normalized transfer time need not give transfer in both directions at that time, even under global periodicity. It does not resolve the size question in Question~\ref{prob:global}.

\begin{question}\label{prob:order}
Given a finite set $\Lambda\subset\mathbb R/\mathbb Z$ containing zero and satisfying condition~\textup{(iv)} of Theorem~\ref{thm:schedules}, what is the minimum order of a connected mixed graph realizing $\Lambda$ as the set of normalized transfer times of a complete PST class? What are the minimum orders of connected mixed graphs with complete PST classes of sizes five and six?
\end{question}

Corollary~\ref{cor:unweightedsix} gives the upper bounds $1315$ and $400$, respectively, using oriented graphs. Corollary~\ref{thm:main} gives an eigenvalue support of size $4k-3$ for a class of size $k$, but the denominators in the construction can make the graph order much larger.

\begin{question}\label{prob:structure}
Which finite sets $\Lambda\subset\mathbb R/\mathbb Z$ containing zero and satisfying condition~\textup{(iv)} of Theorem~\ref{thm:schedules} can be realized by connected regular mixed graphs? Which can be realized by connected oriented Cayley graphs?
\end{question}

The general construction does not preserve regularity or Cayley graph structure. In an oriented Cayley graph, vertex transitivity implies that the automorphisms preserving a complete class act transitively on that class. Lemma~\ref{lem:stabilizer} and Corollary~\ref{cor:symmetrybound} therefore force
\[
 \Lambda=\tfrac1k\mathbb Z/\mathbb Z,\qquad k\in\{1,2,3,4,6\}.
\]
Moreover, its skew adjacency matrix satisfies $A\one=0$, so zero lies in every vertex eigenvalue support and the return phase is $+1$. Thus the Cayley part of Question~\ref{prob:structure} concerns only these cyclic grids.

\appendix
\section{Explicit cell data and verification}\label{app:data}
The arrays below specify the square roots of the cell sizes used in Corollary~\ref{cor:unweightedsix}, indexed by $(a,b)$ for $0\leq a,b<8$. Write $A_\square=A_8\otimes I_8+I_8\otimes A_8$, with $A_8$ given in~\eqref{eq:sixseed}. The Hermitian quotient before time rescaling is $\ii A_\square$.

For the six-vertex class use $r=1$ and square-root cell sizes given by the following array (row $a$, column $b$):
\begin{equation}\label{eq:sizes6}
 (s_{ab})=\begin{pmatrix}
1&3&1&3&1&3&3&3\\
3&1&3&1&3&1&3&3\\
1&3&1&3&1&3&3&3\\
3&1&3&1&3&1&3&3\\
1&3&1&3&1&3&3&3\\
3&1&3&1&3&1&3&3\\
3&3&3&3&3&3&1&1\\
3&3&3&3&3&3&1&1
\end{pmatrix}.
\end{equation}
For the five-vertex class use $r=2$ and
\begin{equation}\label{eq:sizes5}
 (s_{ab})=\begin{pmatrix}
1&6&2&3&2&6&6&6\\
6&1&6&2&3&2&6&6\\
2&6&1&6&2&3&6&6\\
3&2&6&1&6&2&6&6\\
2&3&2&6&1&6&6&6\\
6&2&3&2&6&2&3&3\\
6&6&6&6&6&3&2&2\\
6&6&6&6&6&3&2&2
\end{pmatrix}.
\end{equation}
The cell conditions can be checked directly from these arrays, without constructing either large graph. In~\eqref{eq:sizes6}, an edge of weight one has $s_u=s_v$, and an edge of weight three has at least one endpoint with square-root size three; its possible unordered pairs $(s_u,s_v)$ are $(1,3)$ and $(3,3)$. Thus both degrees in~\eqref{eq:cellconditions} are integers and satisfy the capacity inequality. In~\eqref{eq:sizes5}, the possible unordered pairs $(s_u,s_v)$ are
\[
\begin{array}{c|l}
 w=1 &(1,2),(2,2),(3,6),(6,6)\\
 w=3 &(1,6),(2,3),(2,6),(3,3),(3,6),(6,6).
\end{array}
\]
Every pair satisfies~\eqref{eq:cellconditions} with $r=2$. These lists are obtained by changing one coordinate along a nonzero entry of~\eqref{eq:sixseed}; the supplied integer certificate checks the same list edge by edge.

The first array has $22$ entries one and $42$ entries three, so its graph has $22+42\cdot9=400$ vertices. The second has $5$ entries one, $17$ entries two, $10$ entries three, and $32$ entries six. Its graph therefore has order $5+17\cdot4+10\cdot9+32\cdot36=1315$.

The edge counts are $4896$ and $30376$, respectively. Let $E_\square$ be the edge set of the underlying graph of $A_\square$. Counting each unordered edge once gives
\[
 |E(Y)|=\sum_{\{u,v\}\in E_\square} r|(A_\square)_{u,v}|s_us_v.
\]
The supplement supplies the corresponding sparse skew adjacency matrices and a reproducible exact verification script. Multiplication by $\ii$ gives the Hermitian adjacency matrices.

Write $A_Y$ for the real skew adjacency matrix of either constructed graph $Y$, and let $Q$ be the normalized characteristic matrix of its cells, as in Lemma~\ref{lem:cells}. The accompanying scripts verify entries in $\{0,1,-1\}$, zero diagonal, skew-symmetry, connectivity, and the integer identities
\[
 W^{\mathsf T}W=L_s^2I,\qquad A_YW=rWA_\square,
 \qquad W=L_s Q,\quad L_s=\operatorname{lcm}_v s_v.
\]
They also verify the two explicit matrix constructions, rational orbit frames for both return phases, odd-power lifts, cosine polynomials of small degree, partitions under $\sim_{\mathbb Q}$ for both return phases, and joined models with prescribed relative minimum vertex periods.


\begin{thebibliography}{99}
\bibitem{AC}
A. Acuaviva, A. Chan, S. Eldridge, C. Godsil, M. How-Chun-Lun, C. Tamon, E. Wright, and X. Zhang,
State transfer in chiral quantum walks,
\emph{Philosophical Transactions of the Royal Society A} \textbf{383} (2025), 20240420.
\href{https://doi.org/10.1098/rsta.2024.0420}{doi:10.1098/rsta.2024.0420}.

\bibitem{BFF}
R. Bachman, E. Fredette, J. Fuller, M. Landry, M. Opperman, C. Tamon, and A. Tollefson,
Perfect state transfer on quotient graphs,
\emph{Quantum Information and Computation} \textbf{12} (2012), 293--313.
\href{https://doi.org/10.26421/QIC12.3-4-9}{doi:10.26421/QIC12.3-4-9}.

\bibitem{BBCD}
F. Belardo, M. Brunetti, M. Cavaleri, and A. Donno,
Godsil--McKay switching for mixed and gain graphs over the circle group,
\emph{Linear Algebra and its Applications} \textbf{614} (2021), 256--269.
\href{https://doi.org/10.1016/j.laa.2020.04.025}{doi:10.1016/j.laa.2020.04.025}.

\bibitem{CPRS}
A. Chan, V. R. T. Pantangi, A. S. Razafimahatratra, and P. Sin,
Perfect and multiple state transfer in oriented Cayley graphs,
\href{https://arxiv.org/abs/2510.09873v2}{arXiv:2510.09873v2}, 2026.

\bibitem{CDEL}
M. Christandl, N. Datta, A. Ekert, and A. J. Landahl,
Perfect state transfer in quantum spin networks,
\emph{Physical Review Letters} \textbf{92} (2004), 187902.
\href{https://doi.org/10.1103/PhysRevLett.92.187902}{doi:10.1103/PhysRevLett.92.187902}.

\bibitem{GPeriodic}
C. Godsil,
Periodic graphs,
\emph{The Electronic Journal of Combinatorics} \textbf{18}(1) (2011), \#P23.
\href{https://doi.org/10.37236/510}{doi:10.37236/510}.

\bibitem{GReal}
C. Godsil,
Real state transfer,
\href{https://arxiv.org/abs/1710.04042}{arXiv:1710.04042}, 2017.

\bibitem{GL}
C. Godsil and S. Lato,
Perfect state transfer on oriented graphs,
\emph{Linear Algebra and its Applications} \textbf{604} (2020), 278--292.
\href{https://doi.org/10.1016/j.laa.2020.06.025}{doi:10.1016/j.laa.2020.06.025}.

\bibitem{GM}
K. Guo and B. Mohar,
Hermitian adjacency matrix of digraphs and mixed graphs,
\emph{Journal of Graph Theory} \textbf{85}(1) (2017), 217--248.
\href{https://doi.org/10.1002/jgt.22057}{doi:10.1002/jgt.22057}.

\bibitem{HW}
G. H. Hardy and E. M. Wright,
\emph{An Introduction to the Theory of Numbers}, 6th ed.,
revised by D. R. Heath-Brown and J. H. Silverman,
Oxford University Press, Oxford, 2008.
\href{https://doi.org/10.1093/oso/9780199219858.001.0001}{doi:10.1093/oso/9780199219858.001.0001}.

\bibitem{Householder}
A. S. Householder,
Unitary triangularization of a nonsymmetric matrix,
\emph{Journal of the ACM} \textbf{5}(4) (1958), 339--342.
\href{https://doi.org/10.1145/320941.320947}{doi:10.1145/320941.320947}.

\bibitem{MTZ}
H. Miki, S. Tsujimoto, and D. Zhao,
Quantum walks on simplexes and multiple perfect state transfer,
\emph{Journal of Physics A: Mathematical and Theoretical} \textbf{56} (2023), 465201.
\href{https://doi.org/10.1088/1751-8121/ad034a}{doi:10.1088/1751-8121/ad034a}.

\bibitem{Milne}
J. S. Milne,
\emph{Fields and Galois Theory}, version 5.10, 2022.
\url{https://www.jmilne.org/math/CourseNotes/FT.pdf}.

\bibitem{SFO}
P. Serra, A. Ferr\'on, and O. Osenda,
Engineering perfect state transfer graphs via Givens transformations,
\href{https://arxiv.org/abs/2604.02536}{arXiv:2604.02536}, 2026.

\bibitem{SL}
X. Song and H. Lin,
State transfer on mixed graphs,
\emph{The Electronic Journal of Combinatorics} \textbf{33}(3) (2026), \#P3.60.
\href{https://doi.org/10.37236/14214}{doi:10.37236/14214}.

\bibitem{SS}
E. M. Stein and R. Shakarchi,
\emph{Fourier Analysis: An Introduction},
Princeton Lectures in Analysis, vol.~1,
Princeton University Press, Princeton, NJ, 2003.
\href{https://press.princeton.edu/books/hardcover/9780691113845/fourier-analysis}{Publisher's page}.
\end{thebibliography}
\end{document}